\DeclareMathAlphabet{\mathpzc}{OT1}{pzc}{m}{it} 
\newtheorem{Thm}{Theorem}[section]
\newtheorem{Lem}[Thm]{Lemma}
\newtheorem{Prop}[Thm]{Proposition}
\newtheorem{Def}[Thm]{Definition}
\theoremstyle{definition}
\newtheorem{Rem}[Thm]{Remark}
\theoremstyle{definition}
\theoremstyle{definition}
\newtheorem{Example}[Thm]{Example}
\theoremstyle{definition} 
\newcommand{\eps}{\varepsilon}
\newcommand\alg{\mathbb{A}} 
\newcommand\funzione{\longrightarrow}
\newcommand\ci{\mathbb{C}} 
\newcommand\erre{\mathbb{R}} 
\newcommand{\su}{\mathbb{S}}  
\renewcommand\j{\mathbf{j}}  
\renewcommand\k{\mathbf{k}} 
\newcommand{\quat}{\mathbb{H}}
\DeclareMathOperator{\re}{Re} 
\DeclareMathOperator{\im}{Im} 
\newcommand\setmeno{\!\smallsetminus\!} 
\providecommand{\clint}[1]{\hspace{0.045ex}[#1]} 
\providecommand{\opint}[1]{\hspace{0.15ex}\left]#1\right[\hspace{0.15ex}} 
\newcommand\norma[2]{\Vert #1\Vert_{#2}} 
\newcommand{\A}{\mathsf{A}}  
\newcommand{\B}{\mathsf{B}}  
\newcommand{\Lap}{\mathsf{L}}  
\newcommand{\Id}{\ \!\mathsf{Id}}
\newcommand\End{\textsl{End}}
\renewcommand\r{\textsl{r}} 
\newcommand\Lin{\mathscr{L}}
\newcommand\s{\mathpzc{s}} 
\newcommand{\C}{\mathsf{C}} 
\newcommand{\Q}{\mathsf{Q}}
\newcommand\zbar{\overline{z}}
\newcommand{\so}[1]{\mathcal#1} 
\newcommand{\T}{\mathsf{T}}  
\renewcommand{\S}{\mathsf{S}}  
\newcommand{\U}{\mathsf{U}}  
\newcommand{\F}{\mathsf{F}}  
\newcommand{\G}{\mathsf{G}}  
\newcommand{\R}{\mathsf{R}}  
\newcommand{\D}{\mathsf{D}}  
\newcommand{\1}{\mr{1}}
\providecommand{\opint}[1]{\hspace{0.15ex}\left]#1\right[\hspace{0.15ex}} 
\providecommand{\clsxint}[1]{\hspace{0.1ex}\left[#1\right[\hspace{0.15ex}} 
\DeclareMathOperator{\de}{d \! \hspace{0.2ex}} 
\renewcommand{\S}{\mathsf{S}}  
\newcommand{\V}{\mathsf{V}}  
\renewcommand\sp{\hspace{2.8ex}} 
\renewcommand\i{\mathbf{i}}
\newcommand\vbar{\overline{v}}
\newcommand\enne{\mathbb{N}} 
\newcommand\X{\textsl{X}\hspace{0.1ex}}
\newcommand\dualita[2]{\langle #1,#2 \rangle} 
\def\vuoto{\varnothing} 
\providecommand{\cldxint}[1]{\hspace{0.15ex}\left]#1\right]} 
\newcommand{\lra}{\longrightarrow} 
\newcommand{\RR}{\mathbb{R}}  
\newcommand{\CC}{\mathbb{C}}
\newcommand{\OO}{\Omega}
\newcommand{\mr}{\mathrm}
\newcommand{\mscr}{\mathscr}
\newcommand{\ray}{\mscr{R}}
\newcommand{\ce}{\mscr{C}}
\newcommand{\mc}{\mathcal}
\newcommand{\ui}{\mr{i}}
\newcommand{\II}{\mathfrak{I}}
\newcommand{\bs}{{\tiny$\blacksquare$}}
\definecolor{blu1}{rgb}{0.1,0.1,1}
\definecolor{verde}{rgb}{0.1,0.4,0.2}
\definecolor{pink}{rgb}{1.0, 0.33, 0.64}
\begin{document}


\title[Slice regular semigroups]{Slice regular semigroups}

\author{Riccardo Ghiloni and Vincenzo Recupero}

\address{\textbf{Riccardo Ghiloni}\\
        Dipartimento di Matematica\\ 
        Universit\`a di Trento\\
        Via Sommarive 14\\ 
        38123 Trento\\ 
        Italy. \newline
        {\rm E-mail address:}
        {\tt ghiloni@science.unitn.it}}
        
\address{\textbf{Vincenzo Recupero}\\
        Dipartimento di Scienze Matematiche\\ 
        Politecnico di Torino\\
        Corso Duca degli Abruzzi 24\\ 
        10129 Torino\\ 
        Italy. \newline
        {\rm E-mail address:}
        {\tt vincenzo.recupero@polito.it}}

\subjclass[2010]{47D03, 30G35, 47A60, 47A10}
\keywords{Slice regular semigroups; Analytic semigroups; Functions of hypercomplex variables; Quaternions; Functional calculus; Spectrum, resolvent}
\date{}


\begin{abstract}
In this paper we introduce the notion of slice regular right linear semigroup in a quaternionic Banach space. It is an operatorial function which is slice regular (a noncommutative counterpart of analyticity) and which satisfies a noncommutative semigroup law characterizing the exponential function in an infinite dimensional noncommutative setting. We prove that a right linear operator semigroup in a quaternionic Banach space is slice regular if and only if its generator is spherical sectorial. This result provides a connection between the slice regularity and the noncommutative semigroups theory, and characterizes those semigroups which can be represented by a noncommutative Cauchy integral formula. All our results are generalized to Banach two-sided modules having as a set of scalar any real associative *-algebra, Clifford algebras $\erre_n$ included.
\end{abstract}


\maketitle


\thispagestyle{empty}



\section{Introduction}\label{S:intro}

\subsection{The problem of analytic semigroups in the noncommutative setting}

A linear operators group, or more generally a linear operators semigroup on a real or complex Banach space $X$, is a mapping $\T: \clsxint{0,\infty} \funzione \Lin(X)$ such that $\T(0)$ is the identity and the deterministic law
\begin{equation}\label{classical semigroup law-intro}
 \T(t+s) = \T(t)\T(s) \qquad \forall t, s > 0
\end{equation}
is satisfied, $\Lin(X)$ being the space of bounded linear operators on $X$. For the general theory of operator semigroups we refer to \cite{EngNag00} and we recall here that, under the mild assumption that $y:= \T(\cdot)x$ is continuous for every $x \in X$, it is well-known that there exists the derivative $y'(0) =: \A x$ for every $x$ belonging to a dense subspace 
$D(\A)$ of $X$, and $y$ solves the Cauchy problem in $y'(t) = \A y(t)$, $y(0) = x \in D(\A)$. The linear operator 
$\A : D(\A) \funzione X$ is the so-called \emph{generator} of $\T$. If $\T$ is also continuous from $\clsxint{0,\infty}$ into 
$\Lin(X)$, then $\A$ turns out to be a bounded operator defined on the whole $X$ and 
$\T(t) = e^{t\A} := \sum_{n \ge 0}(t\A)^n/n!$, so that \eqref{classical semigroup law-intro} reads
\[
e^{(t+s)\A} = e^{t\A}e^{s\A}.
\]
Linear operators semigroups are a crucial tool for several topics in mathematics like partial differential equations, quantum mechanics, stochastic processes, control theory, and dyna\-mical networks; applications to other theoretical and applied sciences are also important, e.g. to open quantum systems, population dynamics, Boltzmann equations (cf. \cite[Chapter~VI]{EngNag00}).

The famous paper \cite{Sto32} of M.H. Stone \emph{``On one-parameter unitary groups in Hilbert spaces''} can be considered as the starting point of the modern theory of operator semigroups, whose development is witnessed by the fundamental monographs \cite{HilPhi57, Dav80, Paz83, Gol85, Lun95, Tai95, EngNag00} and by their references. Motivated by quantum mechanics (cf. \cite{Neu32b}), the paper of Stone, together with J. von Neumann's paper \cite{Neu32} \emph{``uber einen Satz von Herrn M.H. Stone''}, are a crucial step for the definition of the exponential map in infinite dimension.

G. Birkhoff and von Neumann in their celebrated paper \cite{BirNeu36} 
\emph{``The logic of quantum mechanics''} pointed out that quantum mechanics can be formulated not only in the nowaday classical setting of complex Hilbert spaces, but also on Hilbert spaces whose set of scalars is $\quat$, the skew-field of quaternions (cf. \cite{Sol95} for details). This remark originated the study of quantum mechanics in the quaternionic framework (see, e.g., \cite{FinJauSchSpe62, Emc63, HorBie84, CasTru85, Adl95}), whose natural set\-ting is a Hilbert two-sided $\quat$-module $X$, and where $\Lin(X)$ is replaced by the set $\Lin^\r(X)$ of bounded right linear operators acting on it (all the precise definitions will be recalled in Section~\ref{S:prelim}). However, the full development of the quaternionic formulation of quantum mechanics was prevented by the lack of a suitable quaternionic notion of spectrum (cf. \cite{CoGeSaSt,GhiMorPer13}). A~first rigorous formulation of quaternionic quantum mechanics has been started only in 2007 when 
the concept of \emph{spherical spectrum} of a quaternionic operator was introduced in \cite{CoGeSaSt07}. This new concept provides the basis for a proper application of the spectral theory to quaternionic quantum mechanics. Indeed, it permits to construct a noncommutative functional calculus for right linear operators on a Banach two-sided module over $\quat$ (and over a Clifford algebra as well, cf. \cite{ColSabStr08, ColSab09, ColSab10, CoGeSaSt, CoGeSaSt10, CoSaSt, GhiMorPer13}) and to deduce spectral representation theorems for normal operators in the quaternionic Hilbert setting (cf. \cite{ACK16,GhiMorPer-bis}).

The mentioned noncommutative functional calculus strongly relies on the theory of slice regu\-lar functions, recently introduced in \cite{GeSt}. Slice regular functions 
extend
to quaternions the classical concept of holomorphic function of a complex variable. They form a class of functions admitting a local power series expansion at every point of their domain of definition (cf. \cite{GenSto12}), including polynomials with quaternionic coefficients on one side.

In order to recall the notion of slice regular function let us first observe the fundamental fact that $\quat$ has a ``slice complex'' nature. This fact can be described as follows. If $\su \subseteq \quat$ is the set of square roots of $-1$ and if, for each $\j \in \su$, we denote by $\CC_\j$ the Euclidean plane of $\quat$ generated by $1$ and $\j$, then 
$\quat = \bigcup_{\j \in \su}\CC_\j$ and $\CC_\j \cap \CC_\k=\RR$ for every $\j,\k \in \su$ with $\j \neq \pm \k$. 
Therefore if $D$ is an open domain of $\CC$ invariant under complex conjugation and $\OO_D=\bigcup_{\j \in \su}D_\j$, where $D_\j:=\{r+s \, \j \in \CC_\j \, : \, r,s \in \RR, r+si \in D\}$, a function $f:\OO_D \lra \quat$ of class $C^1$ is called \emph{right slice regular} (resp. \emph{left slice regular}) if, for every $\j \in \su$, its restriction $f_\j$ to $D_\j$ is holomorphic with respect to the complex structures on $D_\j$ and on $\quat$ defined by the right (resp. left) multiplication by $\j$, i.e. if $\partial f_\j/\partial r+\partial f_\j/\partial s\ \! \j=0$ (resp. $\partial f_\j/\partial r+ \, \j \partial f_\j/\partial s=0$) on $D_\j$. This definition is naturally extended to functions with values in any Banach two-sided $\quat$-module, 
e.g.~$\Lin^\r(X)$. A remarkable property of slice regular functions is a Cauchy-type integral formula (cf. \cite{CoGeSa}). Let us consider first the left slice case. If $D$ is bounded with a piecewise $C^1$ boundary, and $f : \OO_D \funzione \quat$ is left slice regular and continuously extends on the closure of $\OO_D$ in $\quat$, then it holds:
\begin{equation}\label{intro:cauchy formula}
f(p)=\frac{1}{2\pi}\int_{\partial D_\j} C_q(p) \, \j^{-1} \de q \ f(q) \qquad \forall p \in \OO_D, \quad \forall \j \in \su,
\end{equation}
where $C_q(p)$ denotes the
\emph{$($left$\,)$ noncommutative Cauchy kernel}
\[
C_q(p) :=(p^2 - 2\re(q) p + |q|^2)^{-1}(\overline{q}-p),
\]
the line integral in \eqref{intro:cauchy formula} being defined in a natural way (see \eqref{def integrale}). The noncommutative Cauchy kernel $C_q$ is a left slice regular function, while for any fixed $p$ the function $q \longmapsto C_q(p)$ is right slice regular. The unusual fact that the differential $\de q$ appears on the left of $f(q)$ depends on the noncommutativity of $\quat$. If instead $f$ is right slice regular the noncommutative Cauchy integral formula reads 
$f(p)=\frac{1}{2\pi}\int_{\partial D_\j} f(q)  \, \j^{-1} \de q \ C^\r_q(p)$, where 
$C^\r_q(p) := (\overline{q}-p)(p^2 - 2\re(q) p + |q|^2)^{-1}$. 

As observed in \cite{CoGeSaSt,GhiMorPer13}, the classical notions of spectrum and of resolvent operator are not useful in order to define a noncommutative functional calculus. Cauchy integral formula \eqref{intro:cauchy formula} indicates a way to define new notions of spectrum and of resolvent ope\-rator, suitable for the noncommutative case: these notions are the \emph{spherical spectrum} and the \emph{spherical resolvent ope\-rator}. If $\A$ is a right linear operator on a Banach 
two-sided $\quat$-module $X$, then its \emph{spherical resolvent set} is the set of quaternions $q$ such that the operator
\[
\Delta_q(\A) := \A^2 - 2\re(q)\ \!\A + |q|^2 \Id
\]
is bijective and its inverse is bounded, where $\Id$ is the identity operator on $X$. Accordingly, the \emph{spherical spectrum} is the complement of the spherical resolvent set and the \emph{spherical resolvent operator} $\C_q(\A)$ is defined by
\[
  \C_q(\A) := \Delta_q(\A)^{-1}\overline{q} - \A\Delta_q(\A)^{-1}
\]
for every $q$ in the spherical resolvent set of $\A$.

The noncommutative functional calculus based on the spherical resolvent operator is exploited in \cite{ColSab11} in order to prove the counterpart of the classical generation theorems by Hille-Yosida and by Feller-Miyadera-Phillips for a strongly continuous right linear semigroup, i.e. a mapping $\T : \clsxint{0,\infty} \funzione \Lin^\r(X)$ such that $\T(\cdot)x$ is continuous for every $x \in X$. Their statements are analogous to the real and complex cases: the generator of $\A$ has the same formal definition and in particular we still have that $\A$ is bounded if and only if $\T$ is \emph{uniformly continuous}, i.e. $\T \in C(\clsxint{0,\infty};\Lin^\r(X))$; in this case $\T(t) = \sum_{n \ge 0}(t\A)^n/n!$. 

In paper \cite{GhiRec15} we show that the above-mentioned generation theorems for quaternionic right linear semigroups can be actually reduced to the classical commutative case by means of a simple technique, so that the functional calculus is not needed at this stage. In \cite{GhiRec15} we also introduce the class of spherical sectorial right linear operators and we prove that such operators generate a semigroup which can be represented by a Cauchy integral formula. Let us recall that a right linear operator $\A$ on $X$ is \emph{spherical sectorial} with vertex $\omega \in \erre$ if its spherical resolvent set contains a set of the form $\omega + \Omega_{\pi/2+\delta}$,  where
\[
  \Omega_{\pi/2+\delta} :=
   \{q \in \quat \setmeno \{0\} \, : \, \arg(q)<\pi/2+\delta\}
\]
for some $\delta \in \cldxint{0,\pi/2}$, with $\arg(q) := \theta \in \opint{0,\pi}$ if 
$q \in \quat \setmeno \RR$ and 
$q=r e^{\theta\j} \in \mathbb{C}_\j$, $\arg(q) := 0$ if $q \in \opint{0,\infty}$, and $\arg(q) := \pi$ if $q \in \opint{-\infty,0}$.
We prove that, if $\A$ has this property and satisfies the estimate 
\begin{equation}\label{sectorial estimate intro}
  \norma{\C_q(\A)}{} \le \frac{M}{|q - \omega|} \qquad \forall q \in \omega + \Omega_{\pi/2+\delta}
\end{equation}
for some $M \ge 0$, then the formula 
\begin{equation}\label{intro:e^At}
  \T(t) = \frac{1}{2\pi} \int_{\gamma_\j} \C_q(\A) \, \j^{-1}e^{tq} \de q \qquad \forall t > 0, 
\end{equation}
defines a strongly continuous right linear semigroup generated by $\A$, where $\j$ is an arbitrarily fixed element of $\su$ and $\gamma_\j$ is a suitable path of $\CC_\j$, surrounding the possibly unbounded spherical spectrum of $\A$
(in \cite{GhiRec15} we dealt with the case $\omega = 0$, the general case being proved in Theorem 
\ref{main thm AMS} below). As a consequence, the integral in \eqref{intro:e^At} is independent of $\j$ and the semigroup $\T(t)$ is analytic in time. Formula \eqref{intro:e^At} is clearly related to the Cauchy integral formula \eqref{intro:cauchy formula}, where the Cauchy kernel appears on the left: indeed the functions $q \longmapsto C_q(p)$ and $q \longmapsto C_q(\A)$ turn out to be both right slice regular. We underline that the noncommutative setting prevents from the possibility of applying the classical strategy for sectorial operators (see, e.g., \cite[Proposition 4.3, p. 97]{EngNag00}) and a different technique is needed (cf. \cite{GhiRec15}). We also point out a crucial difference between the scalar and operatorial quaternionic cases: if in 
\eqref{intro:cauchy formula} $pq = qp$ (i.e. when $p,q$ belong to the same $\CC_\j$), then $C_q(p)=(q-p)^{-1}$, and we find again the form of the classical Cauchy kernel for holomorphic functions, while in the operatorial case the commutation 
$\A q = q \A$ is in general false if $q$ is not real, so that the operatorial commutative and noncommutative cases are extremely different.

At this point there arises the problem to identify which kind of regularity characterizes the class of semigroups generated by spherical sectorial operators, in other terms we aim to find the class of right linear semigroups which can be represented by the noncommutative Cauchy integral formula \eqref{intro:e^At}. A major result in classical semigroups theory states that in the classical complex case this class is represented by the $\omega$-exponentially bounded \textit{analytic semigroups}, i.e. mappings $z \longmapsto \T(z)$ which are holomorphic in a sector $D_\delta \subseteq \ci$
with $\lim_{z \to 0} \T|_{\D_{\delta'}}(z)x = x$, $\sup_{z \in D_{\delta'}}\norma{\T(z)}{} e^{-\omega \re(z)} < \infty$ for every  subsector $D_{\delta'}$, $x \in X$, and satisfying the semigroup law 
\begin{equation} \label{eq:sl}
  \T(z + w) = \T(z)\T(w) 
\end{equation}
for $z, w \in D_\delta$. This result strongly connects the concept of operator semigroup to the theory of holomorphic functions (cf., e.g., \cite{Lun95,Tai95,EngNag00}).

The present paper is devoted to study this problem in the noncommutative case.


\subsection{A solution of the problem} 
If we first consider the simpler case of a bounded ope\-rator 
$\A \in \Lin^\r(X)$, then it turns out that the proper definition for $\T(q)$ is given by $\T(q) = \sum_{n\ge 0} (\A^n/n!)q^n$ since it uniquely extends $\T(t)$ in a right slice regular manner (in the analogous theory for left linear operators we would find $\sum_{n\ge 0} q^n(\A^n/n!)$). Anyway it turns out that $\T(p+q)$ is different from $\T(p)\T(q)$ even if $p$ and $q$ commute, and this occurs for any other ``reasonable'' extensions of $\T(t)$, i.e. $\sum_{n\ge 0} q^n(\A^n/n!)$, 
$\sum_n (\A q)^n/n!$, $\sum_n (q \A)^n/n!$.

In order to understand what is the point here and to find the proper semigroup law in the noncommutative framework, let us consider again the concept of slice regularity with values in $\quat$, or in $\Lin^\r(X)$, or generally in a Banach two-sided 
$\quat$-algebra $Y$, i.e. the natural noncommutative quaternionic counterpart of a Banach algebra (cf. Definition 
\ref{H-banach algebra} below). One fundamental observation is that the pointwise product of two right slice regular functions is not a right slice regular function. The proper notion of product is instead given by the \emph{slice product}, which can be easily illustrated for polynomial functions or power series. Indeed if we consider for instance series with coefficients in $Y$ on the left of the indeterminate $q$, then it is well-known that the proper way to perform the multiplication consists in imposing commutativity of $q$ with the coefficients (cf. \cite{Lam91}). Thus if $f(q) = \sum_n a_n q^n$ and 
$g(q) = \sum_n b_n q^n$, then their Cauchy product (or convolution) is defined by
\begin{equation}\label{intro:p*q}
  (f*g)(q) := \sum_{n}\bigg(\sum_{k+h=n} a_k b_h\bigg) q^n.
\end{equation}
Note that this product is different from the pointwise product of $f$ and $g$. This happens even when one of the two polynomials is constant, indeed if $g(q) = b_0$ the pointwise product is $f(q)g(q) = \sum_n a_n q^n b_0$, while 
$(f*g)(q) = \sum_n a_n b_0 q^n$. The general notion of \emph{slice product} between two right slice regular functions $f$ and $g$, which is given in Definition \ref{D:slice product} below and will be denoted simply by $f \cdot g$, turns out to be the natural generalization to functions of the product \eqref{intro:p*q} of power series. Since we are particularly interested in operator-valued functions (e.g. $\T(t) = \sum_n (\A^n/n!)q^n$ if $\A$ is bounded), let us consider the case 
$Y = \Lin^\r(X)$ where the product is the composition of operators. If $\F : \Omega_D \funzione \Lin^\r(X)$ and 
$\G : \Omega_D \funzione \Lin^\r(X)$ are two given right slice regular operatorial functions, then the function 
$q \longmapsto \F(q)\G(q)$ is not right slice regular in general, and the correct notion of product turns out to be the slice product $\F \cdot \G$, that in the special operatorial case $Y = \Lin^\r(X)$ will be denoted by the symbol $\F \odot \G$. For simplicity let us consider again the case of power series: if $(\A_n)$ and $(\B_n)$ are two sequences in $\Lin^\r(X)$ and if 
$\F(q) = \sum_n \A_n q^n$ and $\G(q) = \sum_n \B_n q^n$, then
we have
\[
  (\F \odot \G)(q) := \sum_{n}\bigg(\sum_{k+h=n} \A_k \B_h\bigg) q^n.
\]
We are now in position to describe the main result of our paper. We prove that if $\A$ is a spherical sectorial operator with vertex $\omega$ satisfying \eqref{sectorial estimate intro}, then it generates an $\omega$-exponentially bounded 
\emph{right slice regular semigroup}, i.e. a mapping 
$\T : \Omega_{\delta} \cup \{0\} \funzione \Lin^\r(X)$ such that $\T|_{\Omega_\delta}$ is right slice regular, 
$\lim_{q \to 0} \T|_{\Omega_{\delta'}}(q)x = x$, $\sup_{z \in \Omega_{\delta'}} \norma{\T(q)}{} e^{-\omega \re(q)} < \infty$ for every 
$q \in \Omega_{\delta'}$, $\delta' \in \opint{0,\delta}$, $x \in X$, and the following \textit{noncommutative right linear operator semigroup law} holds
\begin{equation} \label{eq:nsl}
  \T(p+q) = \T(p) \odot_p \T(q) \qquad \text{$\forall p,q \in \Omega_\delta$ with $pq = qp$},
\end{equation}
where $\T(p) \odot_p \T(q)$ means that we are considering the slice product with respect to $p$, with $q$ fixed. Vice versa we prove that if $\T$ is an $\omega$-exponentially bounded right slice regular semigroup, then its generator is spherical sectorial with vertex $\omega$. Thus we have obtained the following theorem.

\vspace{.7em}

\noindent
\textbf{Theorem H.} \textit{Let $X$ be a Banach two-sided $\quat$-module, let $\T:\clsxint{0,\infty} \funzione \Lin^\r(X)$ be a strongly continuous right linear semigroup and let $\A:D(\A) \funzione X$ be its right linear generator. Then $\A$ is a spherical sectorial operator with vertex $\omega$ satisfying \eqref{sectorial estimate intro} if and only if $\T$ extends to an 
$\omega$-exponentially bounded right slice regular semigroup.}

\vspace{.7em}

This theorem implies that right slice regular semigroups provide the class of semigroups which can be represented by Cauchy integral formula \eqref{intro:e^At}, namely the infinite dimensional exponential in a noncommutative framework.

Theorem H is a particular case of our main result,  Theorem \ref{thm:main}, 
which is valid in a very general noncommutative setting when the set of scalars $\quat$ is replaced by an arbitrary associative real *-algebra $\alg$, including e.g. all the Clifford algebras $\erre_n$. Indeed the relevant subset of this kind of algebras is the so-called \textit{quadratic cone} 
$Q_\alg$, which enjoys the same slice complex nature of $\quat$, i.e. $Q_\alg = \bigcup_{\j \in \su_{\alg}}\CC_\j$, where 
$\su_{\alg} := \{q \in \alg\ :\ q^2 = -1,\ q^c = -q\}$, $q \longmapsto q^c$ being the operation of *-involution (conjugation), and 
$\ci_\j$ denotes again the Euclidean plane of $\alg$ generated by $1$ and $\j$. This fact allows to employ many arguments of the quaternionic case, even if additional difficulties may arise, due mainly to the existence of zero-divisors. A central point of this analysis is the introduction of the general definition of a slice regular function with values in a Banach two-sided 
$\alg$-module. This new notion requires the concept of \emph{vector stem function} (see \cite{GhiPer11} for the scalar case) and unifies all the different notions of slice regular function disseminated in the literature 
(cf. \cite{GeSt, GhiPer11, ColSab14, ACKS-in-press}). The passage to the vector framework
introduces a difficulty which is not present in the classical commutative complex case, since when we evaluate a right slice regular operator-valued function
$q \longmapsto \F(q)$ at a vector $x$, we obtain that $q \longmapsto \F(q)x$ is not right slice regular anymore (cf. Example 
\ref{eq:no-right-slice} below).
This difficulty is evident in handling the noncommutative counterpart of the Laplace transform (see Section
\ref{SS:laplace transform}), an important tool for the proof of Theorem H.

We point out that our results comprise the classical ones as a particular case. Indeed, if $\alg=\ci$ and $X$ is a usual complex Banach space in which $zx=xz$ for $x \in X$ and $z \in \ci$, then \eqref{eq:nsl} reduces to \eqref{eq:sl} and \eqref{intro:e^At} coincides with the standard Cauchy integral formula for analytic semigroups, because 
$\C_z(\A)=(z\Id-\A)^{-1}$.

\subsection{Structure of the paper} 
The next section is devoted to some prelimi\-nary notions and properties concerning real *-algebras $\alg$. In Section \ref{S:right linear operators} we recall the precise definition of Banach two-sided $\alg$-module, we introduce the natural notion of Banach two-sided $\alg$-algebra and we describe an important example of this kind of algebras, the one of right linear operators acting on a Banach two-sided $\alg$-module. In Section \ref{S:vector slice functions} we define the general concept of slice regular function with values in a Banach two sided $\alg$-module and we prove its main properties, while in the following Section \ref{S:slice regular examples} we provide a list of relevant examples, including right power series, noncommutative exponentials, slice compositions of operatorial functions and spherical resolvent operators. In 
Section~\ref{semigroups} we recall the definition of right linear operator semigroups and we introduce the new class of right slice regular semigroups. The last section is devoted to prove that right slice regular semigroups represent precisely the class of semigroups generated by a spherical sectorial operator.
  

\section{Preliminaries}\label{S:prelim}

Let us assume that
\begin{equation}\label{A real algebra}
  \text{$\alg$ is a nontrivial real algebra with unit $1_\alg$},
\end{equation}  
i.e. we are given a real vector space $\alg \neq \{0\}$, endowed with a bilinear product 
$\alg \times \alg \funzione \alg: (p,q) \longmapsto pq$ whose unit is $1_\alg$. The simplest examples are provided by the set of real numbers $\erre$ and by the complex plane $\ci$, but in general we will admit that the product in $\alg$ is noncommutative, as in the case of the skew-field $\quat$ of quaternions, whose precise definition will be recalled in  Example \ref{examples of A} below. From the bilinearity of the product it follows that
\begin{equation}\label{r(ab) = (ra)b = a(rb)}
  r(pq) = (rp)q = p(rq) \qquad \forall r \in \erre, \quad \forall p, q \in \alg. 
\end{equation}
In this way we can identify the algebra of real numbers $\erre$ with the subalgebra of $\alg$ generated by $1_\alg$, thus $1=1_\alg$ and the notation $rq$ is not ambiguous if $r \in \erre$ and $q \in \alg$. Notice that 
\begin{equation}\label{ra = ar}
  rq = qr \qquad \forall r \in \erre, \quad \forall q \in \alg. 
\end{equation}
We can therefore consider the following well-known generalization of the complex conjugation.
\begin{Def}\label{D:*-algebra}
Assume that \eqref{A real algebra} holds. We say that a mapping $\alg \funzione \alg : q \longmapsto q^c$ is a 
\emph{*-involution} if it is $\erre$-linear and
\begin{alignat}{3}
  & (q^c)^c = q 		& \qquad &  \forall q \in \alg,  \notag \\  
  & (pq)^c = q^c p^c 	& \qquad & \forall p, q \in \alg,  \notag \\ 
  & r^c = r 			& \qquad & \forall r \in \erre.  \notag
\end{alignat}
If $\alg$ is endowed with a *-involution, we also say that $\alg$ is a \emph{real *-algebra}. 
\end{Def}

In the remainder of the paper we will assume that $\alg$ is associative and its real dimension is finite. We will summarize this and the previous assumptions by saying that
\begin{equation}\label{assumption on A}
  \text{$\alg$ is a finite dimensional associative nontrivial real *-algebra with unit,}
\end{equation}
and we will endow $\alg$ with the (Euclidean) topology induced by any norm on it.

\begin{Def}
Assume that \eqref{assumption on A} holds. The \emph{imaginary sphere in $\alg$} is defined by
\begin{equation}\label{imaginary sphere}
  \su_\alg := \left\{q \in \alg\ :\ q^c = -q,\ q^2 = -1\right\}, 
\end{equation}
and we set
\begin{equation}
  \ci_\j := \left\{r+s\j \in \alg\ :\ r, s \in \erre\right\}, \qquad 
  \j \in \su_\alg, \notag
\end{equation}
i.e. $\ci_\j$ is the real vector subspace of $\alg$ generated by $1$ and $\j \in \su_\alg$ 
or, equivalently, the real subalgebra of $\alg$ generated by $\j$. The \emph{quadratic cone} $Q_\alg$ is defined by
\begin{equation}\label{quadratic cone}
  Q_\alg := \bigcup_{\j \in \su_\alg} \ci_\j. 
\end{equation}
Finally the \emph{real part $\re(q)$} and the \emph{imaginary part $\im(q)$} of an element $q \in \alg$ are defined by
\begin{equation}\label{real and imaginary parts}
  \re(q) := (q+q^c)/2, \quad \im(q) := (q-q^c)/2,
  \qquad 
  q \in \alg.  
\end{equation}
\end{Def}

Observe that $Q_\alg$ is a real cone and that every $q \in Q_\alg$ satisfies the real quadratic equation $q^2 - 2\re(q)q + qq^c = 0$, which justifies the name ``quadratic cone''. In general, $Q_\alg$ is not a real vector subspace of $\alg$ (cf. Remark \ref{rem:commutativity} below).

In general $\re(q)$ and $\im(q)$ are not real numbers, at variance with the customary complex notation. If $z \in \ci$ then we set $\Re(z) := (z + \overline{z})/2 \in \erre$ and $\Im(z) := (z - \overline{z})/2i \in \erre$.

In the remainder of the paper, except for Section \ref{S:right linear operators}, we will assume that
\begin{equation}\label{S_A nonvuota}
  \su_\alg \neq \varnothing,
\end{equation}
in particular this removes from consideration the set of real numbers $\erre$. For the reader's convenience, in the following proposition, we give the proof of some useful properties enjoyed by a real *-algebra.

\begin{Prop}\label{useful properties}
Assume that \eqref{assumption on A} and \eqref{S_A nonvuota} hold. Then 
\begin{itemize}
\item[(a)] 
  For every $\j \in \su_\alg$ we have that $1$ and $\j$ are linearly independent and
  \begin{align*}
   & r, s \in \erre, \quad q = r + s \j \quad \Longrightarrow\ \quad q^c = r - s \j, \quad qq^c = q^cq = r^2 + s^2,\\
   & p,q \in \ci_\j \quad \Longrightarrow \quad pq=qp.
  \end{align*}
\item[(b)]
The following properties hold:
\begin{alignat}{3}
  & q^n \in Q_\alg \qquad \forall q \in Q_\alg, \quad \forall n \in \enne,  \notag \\
  & q \in Q_\alg \setmeno \{0\} \quad \Longrightarrow\ \quad \exists q^{-1} = (qq^c)^{-1} q^c \in Q_{\alg} \setmeno \{0\}.   
     \label{inverse of x} \notag
 \end{alignat}
 \item[(c)]
We have that
 \begin{equation}\label{eq:intersection}
  \ci_\j \cap \ci_\k = \erre \qquad \forall \j, \k \in \mathbb{S}_\alg, \ \j \neq \pm \k.
\end{equation}
\item[(d)]
The following set equalities hold:
\begin{gather}
   \su_\alg = \{q \in Q_\alg\ :\ q^2 = -1\}, \notag \\
   Q_\alg = \erre \cup \{q \in \alg \ :\ \re(q) \in \erre,\ qq^c \in \erre,\ qq^c > \re(q)^2\}. \notag
\end{gather}
In particular if $q \in Q_\alg \setmeno \erre$, then $\im(q)\im(q)^c > 0$,
$\j := \im(q)/\sqrt{\im (q) \im (q)^c} \in \su_\alg$, and $q = \re(q) + \sqrt{\im(q) \im (q)^c} \,\j \in \ci_\j$.
\item[(e)]
$Q_\alg=\alg$ if and only if $\alg$ is (a real $^*$-algebra) isomorphic to $\ci$ or $\quat$. 
In this case, if $p, q \in \alg=Q_\alg$, then
\begin{equation} \label{eq:commutative}
  pq = qp \quad \Longleftrightarrow \quad \exists \k \in \su_\alg\ :\ p, q \in \ci_\k.
\end{equation}
\end{itemize}
\end{Prop}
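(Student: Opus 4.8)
The plan is to establish the five items essentially in order, since later parts lean on earlier ones. For (a), fix $\j \in \su_\alg$. Linear independence of $1$ and $\j$ follows because $\j^2 = -1 \notin \erre_{\ge 0}\cdot 1$ would force a contradiction if $\j = r\cdot 1$. Given $q = r + s\j$, $\erre$-linearity of the $*$-involution together with $\j^c = -\j$ (immediate from the definition of $\su_\alg$) and $1^c = 1$ yields $q^c = r - s\j$. Then $qq^c = (r+s\j)(r - s\j) = r^2 - s^2\j^2 + rs\j - rs\j = r^2 + s^2$ by bilinearity and $\j^2 = -1$; the same computation in the other order gives $q^cq = r^2 + s^2$. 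For the last assertion of (a), any $p = a + b\j$, $q = c + d\j$ in $\ci_\j$ multiply using only $1$, $\j$, $\j^2 = -1$ and bilinearity, so $pq = (ac - bd) + (ad + bc)\j = qp$; this shows $\ci_\j$ is a commutative subalgebra.

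For (b): if $q \in Q_\alg$ then $q \in \ci_\j$ for some $\j$, and by the subalgebra property each power $q^n$ stays in $\ci_\j \subseteq Q_\alg$. If moreover $q = r + s\j \ne 0$, then $qq^c = r^2 + s^2 > 0$ is a nonzero real, so $(qq^c)^{-1}$ exists in $\erre$, and one checks directly that $(qq^c)^{-1}q^c$ is a two-sided inverse of $q$ using $q^cq = qq^c \in \erre$ and \eqref{r(ab) = (ra)b = a(rb)}; moreover $(qq^c)^{-1}q^c = (qq^c)^{-1}(r - s\j) \in \ci_\j \subseteq Q_\alg$, and it is nonzero. Item (c) is the key structural fact and I expect it to be the main obstacle: one must show $\ci_\j \cap \ci_\k = \erre$ when $\j \ne \pm\k$. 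Take $q$ in the intersection, write $q = r + s\j = r' + t\k$; applying $\re = \tfrac12(q + q^c)$ and using part (a) gives $r = r'$, so $s\j = t\k$. If $s \ne 0$ then $\j = (t/s)\k$; squaring, $-1 = (t/s)^2(-1)$ forces $t/s = \pm 1$, i.e. $\j = \pm\k$, contrary to hypothesis. Hence $s = 0$ and $q = r \in \erre$; the reverse inclusion $\erre \subseteq \ci_\j \cap \ci_\k$ is trivial.

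For (d): the inclusion $\su_\alg \subseteq \{q \in Q_\alg : q^2 = -1\}$ is clear since $\su_\alg \subseteq Q_\alg$ by definition of the quadratic cone; conversely if $q \in \ci_\j$ with $q = r + s\j$ and $q^2 = -1$, then $q^2 = (r^2 - s^2) + 2rs\j = -1$ forces $rs = 0$ and $r^2 - s^2 = -1$, so $r = 0$, $s = \pm 1$, whence $q = \pm\j \in \su_\alg$ and $q^c = -q$. For the description of $Q_\alg$: if $q = r + s\j \in \ci_\j$ with $s \ne 0$ then $\re(q) = r \in \erre$, $qq^c = r^2 + s^2 \in \erre$ and $qq^c - \re(q)^2 = s^2 > 0$; conversely, given $q$ with those three properties, set $\rho := \sqrt{qq^c - \re(q)^2} > 0$ and $\j := \im(q)/\rho$, and verify from $\im(q)\im(q)^c = \tfrac14(q - q^c)(q^c - q)$ — expanded using $\re(q), qq^c \in \erre$ — that $\im(q)\im(q)^c = qq^c - \re(q)^2 = \rho^2 > 0$, so $\j^c = -\j$ (from $\im(q)^c = -\im(q)$) and $\j^2 = \im(q)^2/\rho^2 = -\im(q)\im(q)^c/\rho^2 = -1$, giving $\j \in \su_\alg$ and $q = \re(q) + \rho\j \in \ci_\j \subseteq Q_\alg$. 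The final "in particular" clause is exactly this construction read off for $q \in Q_\alg \setmeno \erre$.

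For (e): if $\alg$ is isomorphic to $\ci$ or $\quat$ then $Q_\alg = \alg$ by the classical slice structure (every element of $\ci$ lies in $\ci$ itself; every quaternion lies in some $\ci_\j$). Conversely, suppose $Q_\alg = \alg$. Then every $q \in \alg$ satisfies $q^2 - 2\re(q)q + qq^c = 0$ with $\re(q), qq^c \in \erre$, so $\alg$ is a finite dimensional real associative algebra in which every element is algebraic of degree $\le 2$ over $\erre$ and the $*$-involution has $\re(\alg) = \erre$; invoking the classification (Frobenius-type, which I would cite from the references, e.g.\ \cite{GhiPer11} or standard structure theory) forces $\alg \cong \ci$ or $\alg \cong \quat$ as real $*$-algebras. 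For \eqref{eq:commutative} in that case: if $p, q \in \ci_\k$ then $pq = qp$ by (a). Conversely, assume $pq = qp$; by (d) we may assume both are non-real (if one is real the statement is trivial since $\erre \subseteq \ci_\k$ for any $\k$), so $p = \re(p) + \alpha\j$, $q = \re(q) + \beta\l$ with $\j, \l \in \su_\alg$ and $\alpha, \beta > 0$. Commutativity of $p$ and $q$ reduces, after subtracting the (central) real parts, to $\j\l = \l\j$; in $\ci$ this is vacuous and in $\quat$ a direct computation with the standard basis shows two imaginary units commute iff they are real multiples of each other, hence $\l = \pm\j$ and $p, q \in \ci_\j$. $\bs$
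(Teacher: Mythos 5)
Your treatment of (a)--(d) is correct and follows essentially the same route as the paper: linear independence of $1$ and $\j$ from $\j^2=-1$ and the involution computation in (a); powers and inverses taken inside a single slice $\ci_\j$ for (b); and for (c) the reduction to an identity between the two imaginary units, which forces $\k=\pm\j$ (your variant of first equating real parts via $\re$ is equivalent to the paper's direct substitution $\k=r+s\j$, $\k^2=-1$). In (d), one detail should be made explicit: the identity $\im(q)\im(q)^c=qq^c-\re(q)^2$ requires $qq^c=q^cq$, which does not follow from $qq^c\in\erre$ alone but from the centrality of $q+q^c=2\re(q)\in\erre$, via $qq^c=q(q+q^c)-q^2=(q+q^c)q-q^2=q^cq$; the paper records exactly this step.

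The one genuine weak point is the forward implication of (e). The principle you propose to cite --- that a finite-dimensional associative real $*$-algebra in which every element satisfies a monic real quadratic, with $\re(\alg)\subseteq\erre$ and $qq^c\in\erre$, must be $\ci$ or $\quat$ --- is not a theorem: $\erre\times\erre$ with the swap involution $(a,b)^c:=(b,a)$ satisfies all of these conditions, every $(a,b)$ being a root of $x^2-(a+b)x+ab$. (It has $\su=\vuoto$, but that only shows that quadraticity plus reality of trace and norm does not by itself exclude zero divisors, which is the real issue.) Frobenius' theorem classifies finite-dimensional associative real \emph{division} algebras, so what you must extract from $Q_\alg=\alg$ is the invertibility of every nonzero element --- and that is exactly what your own part (b) already gives, since every nonzero element of $Q_\alg=\alg$ has the inverse $(qq^c)^{-1}q^c$. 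This is the paper's argument: $Q_\alg=\alg$ plus (b) makes $\alg$ a division algebra, Frobenius yields $\erre$, $\ci$ or $\quat$, and \eqref{S_A nonvuota} excludes $\erre$. The remainder of your (e) --- commutation iff a common slice, reduced to commutation of the two imaginary units and settled in $\quat$ by the factorization $\j\k-\k\j=(\j-\k)(\j+\k)$ or your basis computation --- is fine and matches the paper's Remark \ref{rem:commutativity}.
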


\begin{proof}
(a)
If $r, s \in \erre$, $s \neq 0$, and $r+s\j = 0$, then $(r/s)^2 = (-\j)^2 = -1$, a contradiction leading to the linear independence of $1$ and $\j$. The properties of the *-involution yields, for $r, s \in \erre$, $(r+s\j)^c = r + s \j^c = r - s \j$. The formula for $qq^c$ and the equality $pq=qp$ are easily verified.

(b)
The two properties follow from an easy induction and a direct computation.

(c)
If $(\ci_\j \cap \ci_\k) \setmeno \erre \neq \vuoto$ then there are $r, s \in \erre$, $s \neq 0$,  such that $\k = r + s\j$, therefore the equality $-1 = \k^2 = r^2 - s^2 + 2rs\j$ yields $r = 0$ and $s^2 = 1$. It follows that $\k = \pm \j$.

(d)
If $q \in Q_\alg$ and $q^2 = -1$, then there are $r , s \in \erre$, $\j \in \su_\alg$ such that $q = r + s\j$, $r=0$, and 
$s^2 = 1$. Therefore $q^c = -q$ and the characterization for $\su_\alg$ is proved. Concerning the second equality, from (a) it follows that, for every $q \in Q_\alg \setmeno \erre$, we have $\re(q) \in \erre$, $qq^c \in \erre$ and 
$qq^c > \re(q)^2$. On the other hand if $q \in \alg \setmeno \erre$ satisfies these three conditions, then $\im(q) \neq 0$ (otherwise $q = (q+q)/2 = (q+q^c)/2 \in \erre$) and $qq^c = q(q+q^c) - q^2 =  (q+q^c)q - q^2 = q^c q$, therefore 
$\im(q)\im(q)^c = qq^c - \re(q)^2 > 0$, $\j := \im(q)/\sqrt{\im(q)\im(q)^c} \in \su_\alg$, 
$q = \re(q) + \sqrt{\im(q)\im(q)^c} \, \j \in \ci_\j$ and (d) is proved.

(e) By the second part of (b), if $\alg=Q_\alg$, then $\alg$ is a division algebra and hence Frobenius' theorem implies that 
$\alg$ is isomorphic to $\ci$ or $\quat$ (cf. \cite[\textsection\ 8.2.4]{numbers}). The converse implication and \eqref{eq:commutative} are evident if $\alg=\ci$ and well-known if $\alg=\quat$ (cf. Example \ref{examples of A} and Remark \ref{rem:commutativity}).
\end{proof}

\begin{Rem}
Part (d) of Proposition \ref{useful properties} shows that definitions \eqref{imaginary sphere} and \eqref{quadratic cone} are consistent with the apparently different definitions given in \cite{GhiPer11}. \bs
\end{Rem}

\begin{Def}\label{circular sets}
Assume that \eqref{assumption on A} and \eqref{S_A nonvuota} hold. If $\j \in \su_\alg$, we define the real $^*$-algebra isomorphism $\phi_\j : \ci \funzione \ci_\j$ by setting
\begin{equation}\label{phiJ C-isomorphism}
 \phi_\j(r + si) := r + s\j, \qquad 
 r,s \in \erre.  \notag
\end{equation}
Given a subset $D$ of $\ci$, invariant under complex conjugation, the \emph{circular set associated to $D$} is the subset $\Omega_D$ of $Q_\alg$ defined by
\begin{equation}\label{Omega D}
  \Omega_D := \bigcup_{\j \in \su_\alg} \phi_\j(D) 
                     = \{r + s \j \in Q_\alg \, : \, r, s \in \erre, \, r+s i \in D, \, \j \in \su_\alg\}.  \notag
\end{equation}
A subset of $Q_\alg$ is said to be \emph{circular} if it is equal to $\Omega_D$ for some set $D$ as above. 
\end{Def}

Observe that if $D$ is open in $\CC$, then $\Omega_D$ is a relatively open subset of $Q_\alg$, because the function 
$Q_\mathbb{A} \to \ci : q \longmapsto \re(q)+i\sqrt{\im(q)\im(q)^c}$ easily extends to a continuous function on the whole $\alg$.

We recall that a real algebra $\alg$ satisfying \eqref{assumption on A} is said to be \textit{Banach} if it is equipped with a (complete) norm $|\cdot|$ which is submultiplicative, i.e. $|pq| \leq |p||q|$ for every $p,q \in \alg$, and $|1|=1$.

In what follows, we will often assume that 
\begin{equation}\label{eq:assumption-2}
  \alg \text{ is Banach with a norm $|\cdot|$ such that, for every $\j \in \su_\alg$, $|pq| = |p||q|$ if $p, q \in \ci_\j$.}\end{equation}
Observe that \eqref{eq:assumption-2} implies the compactness of $\su_\alg$. Indeed, by definition \eqref{imaginary sphere}, $\su_\alg$ is closed in $\alg$. Moreover $\su_\alg$ is contained in the compact sphere $\{q \in \alg \ :\ |q|=1\}$, because 
$|q|^2=|q^2|=|-1|=1$ if $q \in \su_\alg$. As an immediate consequence of the compactness of $\su_\alg$, one obtains that $Q_\alg$ is closed in $\alg$. We remark that \eqref{eq:assumption-2} is ensured by the following condition
\begin{equation}\label{eq:assumption-2bis}
 \alg \text{ is Banach with a norm } |\cdot| \text{ such that $|q|^2=qq^c$ for every $q \in Q_\alg$.}
\end{equation}
Notice that under assumption \eqref{eq:assumption-2bis} $\phi_\j$ is an isometry. It is worth also observing that \eqref{eq:assumption-2} and \eqref{eq:assumption-2bis} are equivalent if the norm $|\cdot|$ is induced by a scalar product on $\alg$ (cf. \cite[\textsection 10.1]{numbers}).

\begin{Example}\label{examples of A}
A remarkable class of associative real $^*$-algebras is the one of Clifford algebras (cf. \cite{GM91,GHS08} and 
\cite[Section 1]{GhPeSt}). Let $p,q \in \enne$, let $n=p+q$ and let $\mc{P}(n)$ be the family of all subsets of $\{1,\ldots,n\}$, where $\mc{P}(0)=\vuoto$. Identify $\RR$ with the vector subspace $\RR \times \{0\}$ of 
$\RR^{2^n}=\RR \times \RR^{2^n-1}$ and denote by $\{e_K\}_{K \in \mc{P}(n)}$ the canonical basis of $\RR^{2^n}$, where $e_{\vuoto}:=1$. For convenience, indicate $e_{\{k\}}$ also by $e_k$ if $k \in \{1,\ldots,n\}$. Let us define a real bilinear and associative product on $\erre^{2^n}$ by imposing that
\begin{itemize}
 \item $1$ is the neutral element;
 \item $e_k^2=1$ if $k \in \{1,\ldots,p\}$ and $e_k^2=-1$ if $k \in \{p+1,\ldots,n\}$;
 \item $e_ke_h=-e_he_k$ if $k,h \in \{1,\ldots,n\}$ with $k \neq h$;
 \item $e_K=e_{k_1}\cdots e_{k_s}$ if $K \in \mc{P}(n) \setmeno \{\vuoto\}$ and $K=\{k_1,\ldots,k_s\}$ with 
 $k_1<\ldots<k_s$.
\end{itemize}
This product on $\erre^{2^n}$ defines the so-called \emph{Clifford algebra $\mathit{C}\ell_{p,q}$ of signature $(p,q)$}, which is denoted also by $\erre_{p,q}$. Evidently, such an associative real algebra is not commutative if $n \geq 2$. The \textit{Clifford conjugation} of $\erre_{p,q}$ is the $^*$-involution $x \longmapsto \overline{x}$ which fixes $e_K$ if $K$ has $s$ elements and $s \equiv 0,3 \ \mr{mod}\ 4$ and sends $e_K$ into $-e_K$ if $s \equiv 1,2 \ \mr{mod}\ 4$. Endowing 
$\erre_{p,q}$ with Clifford conjugation, we obtain a real $^*$-algebra satisfying \eqref{assumption on A}. However, such an algebra $\erre_{p,q}$ does not have both properties \eqref{S_A nonvuota} and \eqref{eq:assumption-2} if $p \geq 1$: 
\begin{itemize}
 \item $\su_{\erre_{0,0}}=\vuoto$ ($\erre_{0,0}=\erre$ indeed) and $\su_{\erre_{1,0}}=\vuoto$, so $\erre_{0,0}$ and 
 $\erre_{1,0}$ do not verify \eqref{S_A nonvuota}.
 \item $\su_{\erre_{2,0}}$ and $\su_{\erre_{1,1}}$ are 2-hyperboloids in $\erre^4$ (recall that $\erre_{2,0}$ and $\erre_{1,1}$ are isomorphic) and hence they are not compact. It follows that $\erre_{p,q}$ does not admit any norm with property \eqref{eq:assumption-2} if $p \geq 2$ or $p=1$ and $q \geq 1$, because in these cases 
 $\su_{\erre_{2,0}} \subset \su_{\erre_{p,q}}$ or $\su_{\erre_{1,1}} \subset \su_{\erre_{p,q}}$.
\end{itemize}

Let us consider the case $p=0$ and $n=q \geq 1$. For simplicity, we use the alternative notation~$\erre_n$ instead of 
$\erre_{0,n}$. By direct inspection, one verifies that a point $x=\sum_{K \in \mc{P}(n)}x_Ke_K$ of $\erre_n$ with 
$x_K \in \erre$ belongs to the quadratic cone $Q_{\erre_n}$ of $\erre_n$ if and only if it satisfies the following polynomial equations  
\[
x_K=0 \quad \mbox{and} \quad \langle x , xe_K \rangle=0 \quad \text{for every $K \in \mc{P}(n) \setmeno \{\vuoto\}$ with $e_K^2=1$},
\]
where $\langle \cdot , \cdot \rangle$ denotes the standard scalar product on $\RR_n=\RR^{2^n}$. On $\erre_n$ it is defined the following submultiplicative norm, called \textit{Clifford operator norm}:
\[
|x|_{\mathit{C}\ell}:=\sup\{|xa| \in \RR \, : \, |a|=1\},
\]
where $|\cdot|$ indicates the Euclidean norm of $\erre_n=\erre^{2^n}$. It turns out that:
\begin{itemize}
 \item $Q_{\erre_n}=\erre_n$ if and only if $n \in \{1,2\}$. In particular, $\erre_1$ and $\erre_2$ are division algebras.
 \item $|x|_{\mathit{C}\ell}=|x|=\sqrt{x\overline{x}}$ for every $x \in Q_{\erre_n}$ and hence $|\cdot|_{\mathit{C}\ell}=|\cdot|$ if $n \in \{1,2\}$. If $n \geq 3$, 
 the Euclidean norm $|\cdot|$ of $\erre_n$ is not submultiplicative (e.g. 
 $|(1+e_{\{1,2,3\}})^2|=\sqrt{8}>2=|1+e_{\{1,2,3\}}|^2$) and $\erre_n$ has zero divisors (e.g 
 $(1+e_{\{1,2,3\}})(1-e_{\{1,2,3\}})=0$).
\end{itemize}

Endowing $\erre_n$ ($n \geq 1$) with Clifford conjugation and Clifford operator norm, we obtain a Banach real $^*$-algebra satisfying \eqref{S_A nonvuota} and \eqref{eq:assumption-2bis}. \textit{In what follows we always consider $\erre_n$ equipped with such a structure of Banach real $^*$-algebra}. The cases $n=1$ and $n=2$ are very important:
\begin{itemize}
 \item $\erre_1$ coincides with $\ci$ endowed with the standard conjugation, if we set $e_1=i$.
 \item $\erre_2$ is called \textit{algebra of quaternions}. Usually it is denoted by $\quat$ and one writes $i$, $j$ and $k$ in place of $e_1$, $e_2$ and $e_{\{1,2\}}$, respectively. \bs
\end{itemize}
\end{Example}

\begin{Rem} \label{rem:commutativity}
Two quaternions $p,q \in \quat$ commute if and only if they belong to the same slice $\ci_\j$. Let 
$p,q \in \quat \setmeno \erre$ and let $\j,\k \in \su_\quat$ such that $p \in \ci_\j$ and $q \in \ci_\k$. The equality $pq = qp$ is equivalent to $\j\k = \k\j$. Since $\j\k - \k\j=(\j - \k)(\j + \k)$ and $\quat$ has no zero divisors, we conclude that $p$ and $q$ commute if and only if $\j=\pm \k$, i.e. $p$ and $q$ belong to the same slice $\ci_\j$. This is not true in $\erre_n$ if 
$n \geq 3$; indeed, $e_3,e_{\{1,2\}} \in \su_{\erre_3}$, $e_3 \neq \pm e_{\{1,2\}}$, but $e_3e_{\{1,2\}}=e_{\{1,2\}}e_3$. The reader observes that $Q_{\erre_n}$ is not a real vector subspace of $\erre_n$ if $n \geq 3$. Indeed, since $e_3$ and $e_{\{1,2\}}$ commute, $e_3+e_{\{1,2\}}$ does not belong to $Q_{\erre_3}$, because $(e_3+e_{\{1,2\}})(\overline{e_3+e_{\{1,2\}}}\,)=2+2e_{\{1,2,3\}} \not\in \erre$. \bs 
\end{Rem}


\section{Two-sided $\alg$-algebras}\label{S:right linear operators}

 

\subsection{Two-sided modules and algebras}\label{SS:bimodules}

Let us recall that, if $\alg$ satisfies \eqref{assumption on A}, an abelian group $(X,+)$ is a \emph{left $\alg$-module} if it is endowed with a left scalar multiplication $\alg \times X \funzione X : (q,x) \longmapsto qx$ such that
\begin{alignat}{5}
  & q(x+y) = q x + q y 	& \qquad 	& \forall x, y \in X,	& \quad	& \forall q \in \alg, \notag \\
  & (p+q)x = p x + q x 	& \qquad 	& \forall x \in X, 		& \quad 	& \forall p, q \in \alg, \notag \\   
  & 1x = x 			& \qquad 	& \forall x \in X, \notag \\  
  & p(qx) = (pq)x 		& \qquad 	& \forall x \in X, 		& \quad 	& \forall p, q \in \alg. \notag
\end{alignat}
An abelian subgroup $Y$ of $X$ is a \emph{left $\alg$-submodule} if $qx \in Y$ whenever $x \in Y$ and $q \in \alg$. If 
$\alg$ is a field we obtain the classical notions of (left) vector space and subspace. 

The definition of \emph{right $\alg$-module} is completely analogous: it is required that the abelian group $(X,+)$ is endowed with a right scalar multiplication $X \times \alg \funzione X : (x,q) \longmapsto xq$ such that
\begin{alignat}{5}
  & (x+y)q = xq + yq & \qquad	& \forall x, y \in X,	& \quad	& \forall q \in \alg, \notag \\
  & x(p+q) = xp + xq & \qquad	& \forall x \in X, 		& \quad 	& \forall p, q \in \alg, \notag \\   
  & x 1= x	& \qquad 	& \forall x \in X, \notag \\  
  & (xp)q = x(pq) 		& \qquad	& \forall x \in X, 		& \quad 	& \forall p, q \in \alg. \notag
\end{alignat}
An abelian subgroup $Y$ of $X$ is a \emph{right $\alg$-submodule} if $xq \in Y$ whenever $x \in Y$ and $q \in \alg$.

\begin{Def}
Assume that \eqref{assumption on A} holds and let $(X,+)$ be an abelian group. We say that $X$ is a 
\emph{two-sided $\alg$-module} (or \emph{$\alg$-bimodule}) if it is endowed with two scalar multiplications 
$\alg \times X \funzione X : (q,x) \longmapsto q x$ and $X \times \alg \funzione X : (x,q) \longmapsto xq$ such that $X$ is both a left $\alg$-module and a right $\alg$-module and 
\begin{alignat}{3}
  & p(xq) = (p x)q	& \qquad 	& \forall x \in X, \quad \forall p, q \in \alg,   \notag \\
  & r x = x r 		& \qquad 	& \forall x \in X, \quad \forall r \in \erre. \label{prod per reali}
\end{alignat}
An abelian subgroup $Y$ of $X$ is a \emph{two-sided $\alg$-submodule} if it is both a left and a right $\alg$-submodule of $X$.
\end{Def}

If $\alg$ were simply a ring, then \eqref{r(ab) = (ra)b = a(rb)} and \eqref{ra = ar} make no sense, thus condition 
\eqref{prod per reali} should be omitted (see, e.g., \cite[Chapter 1, Section 2, p. 26-28]{AndFul74}). In our case 
$\alg$ is an algebra and it is natural to require \eqref{prod per reali}.

In \cite{AndFul74} it is suggested a self-explanatory notation which is useful when we consider different sets of scalars simultaneously: if $X$ is an abelian group then
\begin{align}
  & \text{$_\alg X$ means that $X$ is considered as a left $\alg$-module}, \notag \\
  & \text{$X_\alg$ means that $X$ is considered as a right $\alg$-module}. \notag
\end{align}

\begin{Def}\label{D:norma su V}
Assume \eqref{assumption on A} and \eqref{eq:assumption-2} hold and let $X$ be a two-sided $\alg$-module. A function 
$\norma{\cdot}{} : X  \funzione \clsxint{0,\infty}$ is called an \emph{$\alg$-norm on $X$} if 
\begin{alignat}{3}
  & \norma{x}{} = 0 \ \Longleftrightarrow \ x = 0, \notag \\
  & \norma{x + y}{} \le \norma{x}{} + \norma{y}{} 		& \qquad & \forall x, y \in X,  \notag \\
  & \norma{q x}{} \le |q| \, \norma{x}{}, \quad  \norma{x q}{} \le |q| \, \norma{x}{} 	
  & \qquad & \forall x \in X, \quad \forall q \in \alg. \label{eq:homog} 
\end{alignat}
Equipped with this kind of norm, $X$ is called a \emph{normed two-sided $\alg$-module} and we endow it with the topology induced by the metric $d : X \times X \funzione \clsxint{0,\infty} : (x,y) \longmapsto \norma{x-y}{}$. Finally, we say that $X$ is a \emph{Banach two-sided $\alg$-module} if this metric $d$ is complete.
\end{Def}

Observe that if $q \in Q_\alg \setmeno \, \{0\}$ and $x \in X$, then \eqref{eq:assumption-2} implies that
$\norma{xq}{} \leq \norma{x}{}|q| = \norma{x q q^{-1}}{} |q| \leq \norma{xq}{} |q^{-1}||q| = \norma{xq}{}$, therefore
$\norma{x q}{} = \norma{x}{} |q|$. A similar argument applies to $\norma{q x}{}$, therefore we have the following
result.

\begin{Lem}\label{norma moltiplicativa}
Assume \eqref{assumption on A} and \eqref{eq:assumption-2} hold, and let $X$ be a normed two-sided $\alg$-module.
Then
\[
\norma{qx}{} = \norma{xq}{} = |q|\norma{x}{} \qquad \forall x \in X, \quad \forall q \in Q_\alg.
\]
\end{Lem}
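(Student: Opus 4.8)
The plan is to derive the asserted equality from the two homogeneity estimates in \eqref{eq:homog} by a pinching argument, the key input being that every nonzero element of the quadratic cone is invertible \emph{inside its own slice}, which upgrades hypothesis \eqref{eq:assumption-2} to the precise relation $|q^{-1}| = |q|^{-1}$.

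First I would dispose of the trivial cases $q = 0$ and $x = 0$, in which both sides vanish, and assume from now on $q \in Q_\alg \setmeno \{0\}$. By Proposition \ref{useful properties}(b) the element $q$ is invertible, with $q^{-1} = (qq^c)^{-1}q^c$. Fixing $\j \in \su_\alg$ with $q \in \ci_\j$ (possible by the definition of $Q_\alg$), Proposition \ref{useful properties}(a) gives $q^c \in \ci_\j$ and $qq^c \in \erre$, hence also $q^{-1} = (qq^c)^{-1}q^c \in \ci_\j$. Applying hypothesis \eqref{eq:assumption-2} to the pair $q, q^{-1} \in \ci_\j$ then yields
\[
  |q|\,|q^{-1}| = |q q^{-1}| = |1| = 1,
\]
so that $|q^{-1}| = |q|^{-1}$.

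Next, for an arbitrary $x \in X$, the right half of \eqref{eq:homog} gives $\norma{xq}{} \le |q|\,\norma{x}{}$; writing $x = (xq)q^{-1}$ and applying \eqref{eq:homog} again gives $\norma{x}{} \le |q^{-1}|\,\norma{xq}{} = |q|^{-1}\norma{xq}{}$, that is, $|q|\,\norma{x}{} \le \norma{xq}{}$. Combining the two inequalities forces $\norma{xq}{} = |q|\,\norma{x}{}$. The same reasoning with the roles of the two scalar multiplications interchanged (now using $x = q^{-1}(qx)$ together with the left half of \eqref{eq:homog}) yields $\norma{qx}{} = |q|\,\norma{x}{}$, which completes the argument. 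There is no substantial obstacle here; the only point worth a moment's care is that $q^{-1}$ remains in $\ci_\j$, which is immediate from the explicit formula for the inverse in Proposition \ref{useful properties}(b) and the fact that $qq^c$ is real.
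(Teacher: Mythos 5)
Your proposal is correct and follows essentially the same route as the paper: the paper's proof is exactly the pinching chain $\norma{xq}{} \leq \norma{x}{}|q| = \norma{xqq^{-1}}{}|q| \leq \norma{xq}{}|q^{-1}||q| = \norma{xq}{}$, resting on the fact that $q$ and $q^{-1}$ lie in the same slice $\ci_\j$ so that \eqref{eq:assumption-2} gives $|q^{-1}||q|=1$. Your write-up merely makes explicit the observation that $q^{-1}=(qq^c)^{-1}q^c \in \ci_\j$, which the paper leaves implicit.
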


\begin{Rem}
If $X$ is a normed two-sided $\alg$-module whose $\alg$-norm is $\norma{\cdot}{}$, then, since $\RR \subseteq Q_\alg$, the preceding lemma implies that $\norma{\cdot}{}$ is a norm on $_\RR X$ in the usual real sense. Therefore the metric on $X$ is the one induced by $\norma{\cdot}{}$ as a standard norm on $_{\erre}X$. Finally observe that $X$ is a Banach 
two-sided $\alg$-module if and only if $_\erre X$ is a real Banach space. \bs
\end{Rem}

\begin{Def}\label{H-banach algebra}
Assume that \eqref{assumption on A} holds. A two-sided $\alg$-module $X$ is called \emph{(associative) two-sided 
$\alg$-algebra} if it is endowed with an associative product $X \times X \funzione X : (x,y) \longmapsto xy$ such that 
\begin{alignat}{4}
  & x(y + z) = xy + xz & \qquad 	& \forall x, y, z \in X, 	&  \notag \\
  & (x+y)z = xz + yz  	& \qquad 	& \forall x, y, z \in X, 	& \notag \\
  & q(xy) = (qx)y 	& \qquad 	& \forall x, y \in X, 	& \quad \forall q \in \alg, \notag \\
  & (xy)q = x(yq) 	& \qquad 	& \forall x, y \in X, 	& \quad \forall q \in \alg. \label{(xy)q = x(yq)} \notag
\end{alignat}
If we also assume that \eqref{eq:assumption-2} holds, then we say that $X$ is a 
\emph{normed two-sided $\alg$-algebra} provided $X$ is endowed with an $\alg$-norm $\norma{\cdot}{}$ such that 
$\norma{xy}{} \le \norma{x}{} \norma{y}{}$ for every $x, y \in X$. If $X$ is complete we say that $X$ is a 
\emph{Banach two-sided $\alg$-algebra}. If in addition $X$ is nontrivial and has a unit $1_X$ such that 
$\norma{1_X}{} = 1$, then $X$ is called a \emph{Banach two-sided $\alg$-algebra with unit}.
\end{Def}

\begin{Example} \label{exa:b-t-s-a}
Assume $\alg$ satisfies \eqref{assumption on A} and \eqref{eq:assumption-2}, e.g. $\alg=\erre_n$. Given a nonempty set $S$, the set of bounded $\alg$-valued functions on $S$, equipped with the pointwise operations of sum, of product, of left and right multiplications by scalars in $\alg$, of $^*$-involution $f^c(s):=(f(s))^c$ and endowed with supremum norm 
$\|f\|_\infty:=\sup_{s \in S}|f(s)|$, is a Banach two-sided $\alg$-algebra with unit. In particular, this is true for each power 
$\alg^m$. If $S$ has a topological structure, then the same pointwise defined operations make the set of bounded continuous $\alg$-valued functions on $S$ a Banach two-sided $\alg$-algebra with unit. \bs
\end{Example}

Another example of Banach two-sided $\alg$-algebra with unit, which is of crucial importance in this paper, is the one of right linear operators on a Banach two-sided $\alg$-module. We present this example in Section 
\ref{S:vector slice functions}.


\subsection{Right linear operators}

Let us recall the concept of right linear operators acting on a two-sided $\alg$-module. Assume that
\[
\text{\emph{$\alg$ satisfies \eqref{assumption on A} and $X$ is a Banach two-sided $\alg$-module.}}
\]
\begin{Def}
Let $D(\A)$ be a right $\alg$-submodule of $X$. We say that $\A : D(\A) \funzione X$ is \emph{right linear} if it is additive and
\begin{equation*}
 \A(xq) = \A(x) q \qquad \forall x \in D(\A), \quad \forall q \in \alg.
\end{equation*}
As usual, the notation $\A x$ is often used in place of $\A(x)$. We use the symbol $\End^{\r}(X)$ to denote the set of right linear operators $\A$ with $D(\A) = X$. The identity operator is right linear and is denoted by $\Id_X$ or simply by $\Id$ if no confusion may arise. Moreover, if $X$ is a normed two-sided $\alg$-module, then we say that 
$\A : D(\A) \funzione X$ is \emph{closed} if its graph is closed in $X \times X$. As in the classical theory, we set 
$D(\A^n) := \{x \in D(\A^{n-1}) \, : \, \A^{n-1} x \in D(\A)\}$ for every $n \in \enne \setmeno \{0\}$. 
\end{Def}

Let us also recall the following definition (see, e.g., \cite[Chapter 1, p. 55-57]{AndFul74}).

\begin{Def}
Let $D(\A)$ be a right $\alg$-submodule of $X$ and let $q \in \alg$. If $\A : D(\A) \funzione X$ is a right linear operator, then we define the mapping $q \A : D(\A) \funzione X$ by setting
\begin{equation}
   (q \A)(x) := q \A(x), \qquad 
   x \in D(\A).  \label{operatore per scalare a sx}
\end{equation}  
If $D(\A)$ is also a left $\alg$-submodule of $X$, then we can define $\A q :D(\A) \funzione X$ by setting
\begin{equation}
(\A q)(x):=\A(q x), \qquad 
x \in D(\A). \label{operatore per scalare a dx}
\end{equation} 
The sum of operators is defined in the usual way.
\end{Def}
It is easy to see that the operators defined in \eqref{operatore per scalare a sx} and \eqref{operatore per scalare a dx} are right linear.

\begin{Def}
Assume $X$ is normed with $\alg$-norm $\norma{\cdot}{}$. For every $\A \in \End^\r(X)$, we set
\begin{equation}\label{norma operatoriale}
  \norma{\A}{} := \sup_{x \neq 0} \frac{\norma{\A x}{}}{\norma{x}{}}
\end{equation}
and we define the set
\[
   \Lin^\r(X) := \{\A \in \End^\r(X)\ :\ \norma{\A}{} < \infty\}. \notag \\
\]
\end{Def}

Observe that $\norma{\A}{}$ can be equivalently defined as the operatorial norm of $\A$ as an element of 
$\End(_\erre X)$, therefore
\begin{align}\label{altra descrizione di Lr(V)}
  \Lin^\r(X) 
    & = \{\A \in \End(_\erre X) \, : \, \A \text{ is right linear}, \ \norma{A}{} < \infty\} \notag \\
    & = \{\A \in \Lin(_\erre X) \, : \, \A \text{ is right linear}\}, \notag
\end{align}
where $\Lin(_\erre X) = \{\A \in \End(_\erre X)\ :\ \norma{\A}{} < \infty\}$ is the usual normed $\RR$-vector space of continuous $\erre$-linear operators on $_\RR X$. The sum of operators, the scalar multiplications 
\eqref{operatore per scalare a sx} and \eqref{operatore per scalare a dx}, the composition, and \eqref{norma operatoriale}, make $\Lin^\r(X)$ a normed two-sided $\alg$-algebra with unit $\Id$. If $X$ is Banach, then $\Lin^\r(X)$ is Banach. Let us recall the following lemma (cf. \cite[Lemma 2.19]{GhiRec15}).

\begin{Lem}\label{L:L^r chiuso in L}
Let $X$ be a normed two-sided $\alg$-module with $\alg$-norm $\norma{\cdot}{}$. The $\erre$-vector subspace 
$\Lin^\r(X)$ of $\Lin(_{\erre}X)$ is closed with respect to the topology of pointwise convergence and hence with respect to the uniform operator topology of $\Lin(_{\erre}X)$.
\end{Lem}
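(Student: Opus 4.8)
The plan is to realise $\Lin^\r(X)$ as an intersection of zero sets of continuous linear maps. Fix $x \in X$ and $q \in \alg$ and consider the map
\[
  \Phi_{x,q} : \Lin({}_\erre X) \funzione X, \qquad \Phi_{x,q}(\A) := \A(xq) - \A(x)q,
\]
which is $\erre$-linear in $\A$. The first step is to check that $\Phi_{x,q}$ is continuous when $\Lin({}_\erre X)$ carries the topology of pointwise convergence. For every $y \in X$ the evaluation $\A \longmapsto \A(y)$ is continuous by the very definition of that topology; moreover the map $y \longmapsto yq$ is a bounded $\erre$-linear operator on ${}_\erre X$, since $\norma{yq}{} \le |q|\,\norma{y}{}$ by \eqref{eq:homog}. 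Hence $\A \longmapsto \A(x)q$, being the composition of evaluation at $x$ with right multiplication by $q$, is also continuous, and so is $\Phi_{x,q}$.

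Next I would observe that an operator $\A \in \Lin({}_\erre X)$ lies in $\Lin^\r(X)$ precisely when it is right linear, and since $\erre$-linearity already yields additivity, this amounts exactly to $\A(xq) = \A(x)q$ for all $x \in X$ and $q \in \alg$, i.e. $\Phi_{x,q}(\A) = 0$. Therefore
\[
  \Lin^\r(X) = \bigcap_{x \in X,\ q \in \alg} \Phi_{x,q}^{-1}(\{0\}).
\]
Because $X$ is a metric, hence Hausdorff, space, $\{0\}$ is closed in $X$, so each $\Phi_{x,q}^{-1}(\{0\})$ is closed for the topology of pointwise convergence, and so is the displayed intersection. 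Finally, the uniform operator topology on $\Lin({}_\erre X)$ is finer than the topology of pointwise convergence, so a subset closed for the latter is a fortiori closed for the former; this gives the ``hence'' clause.

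There is essentially no deep obstacle here: the only points that require a little care are to avoid invoking metrizability of the pointwise‑convergence topology (one argues with closed sets directly, or equivalently with nets rather than sequences) and to use explicitly the continuity of the one‑sided scalar multiplications $y \longmapsto yq$, which is exactly the content of the norm inequality \eqref{eq:homog}. An alternative, entirely equivalent, write‑up would take a net $(\A_i)$ in $\Lin^\r(X)$ converging pointwise to $\A \in \Lin({}_\erre X)$ and pass to the limit in the identity $\A_i(xq) = \A_i(x)q$, using the same continuity facts.
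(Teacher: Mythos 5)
Your proof is correct and complete. Note that the paper itself does not prove this lemma but only quotes it from an earlier work (\cite[Lemma 2.19]{GhiRec15}); the argument you give — writing $\Lin^\r(X)$ as $\bigcap_{x,q}\Phi_{x,q}^{-1}(\{0\})$ with $\Phi_{x,q}(\A)=\A(xq)-\A(x)q$ continuous for pointwise convergence thanks to the continuity of evaluations and of $y\longmapsto yq$ guaranteed by \eqref{eq:homog}, or equivalently passing to the limit in $\A_i(xq)=\A_i(x)q$ along a net — is the standard one and surely the intended proof. You correctly handle the only two points of care: additivity of a right linear operator is subsumed by $\erre$-linearity, so membership in $\Lin^\r(X)$ for an element of $\Lin(_\erre X)$ reduces exactly to the identities $\A(xq)=\A(x)q$; and the ``hence'' clause follows because the uniform operator topology refines the topology of pointwise convergence.
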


It is also useful to consider the following complex structures on the two-sided $\alg$-module $X$.

\begin{Def}\label{D: complex space X_j}
Assume \eqref{S_A nonvuota} holds and let $\j \in \su_\alg$. We endow the abelian group $(X,+)$ with the complex scalar multiplication $\ci \times X \funzione X$ defined~by
\begin{equation}\label{product defining X_j}
  z x := x\phi_\j(z), \qquad 
  x \in X, \ 
  z \in \ci.
\end{equation}
The resulting complex vector space will be denoted by $X_\j$. If $\A : D(\A) \funzione X$ is a right linear operator, then we define the complex subspace $D(\A_\j)$ of $X_\j$ and the $\ci$-linear operator $\A_\j : D(\A_\j) \funzione X_\j$ by setting $D(\A_\j) := D(\A)$ and $\A_\j(x) := \A(x)$ for every $x \in D(\A_\j)$.
\end{Def}

\begin{Rem}
(i) Fix $\j \in \su_\alg$. Since $\ci_\j \subseteq Q_\alg$, if $X$ is normed with $\alg$-norm $\norma{\cdot}{}$, then Lemma \ref{norma moltiplicativa} ensures that $\norma{\cdot}{}$ is a norm on $X_\j$ in the usual complex sense. It is immediate to verify that $(X, \norma{\cdot}{})$ is a Banach two-sided $\alg$-module if and only if $(X_\j, \norma{\cdot}{})$ is a complex Banach space.

(ii) Let $\j \in \su_\alg$ and let $\|\cdot\|$ be an $\alg$-norm on $X$. Denote by $\Lin(X_\j)$ the $\ci$-vector space of continuous $\ci$-linear operators defined on the whole $X_\j$, equipped with the usual pointwise operations of sum and scalar multiplication. We have that $\Lin^\r(X) \subseteq \Lin(X_\j) \subseteq \Lin(_\erre X)$, the second inclusion being strict if $X \neq \{0\}$. 
If $X \neq \{0\}$ and there exists $q \in \mathbb{A}$ such that $\j q - q \j$ is invertible in $\mathbb{A}$ (this is true if, e.g., 
$\alg = \quat$), then the operator $X \to X: x \longmapsto x\j$ belongs to $\Lin(X_\j) \setmeno \Lin^\r(X)$ and the first inclusion is strict too. Furthermore, if $\alg$ coincides with the real subalgebra generated by $Q_\alg$ (e.g. if $\alg$ is equal to some 
$\erre_n$), then $\Lin^\r(X) = \bigcap_{\i \in \su_\alg} \Lin(X_\i)$.

(iii) There would be no need to introduce the notation $\A_\j$, the notion of mapping being a set-theoretical one. Anyway this is convenient to shorten some statements about $\A$ considered as a linear operator on a complex vector space. \bs
\end{Rem}


\section{Slice functions with values in a two-sided $\alg$-module}\label{S:vector slice functions}

The aim of this section is to introduce the notion of vector-valued slice regular function and to study its properties. We assume that
\[
  \text{\emph{$\alg$ is a real algebra satisying \eqref{assumption on A}, \eqref{S_A nonvuota} and \eqref{eq:assumption-2}.}}
\]
and that
\begin{equation}
  \text{\emph{$X$ is a Banach two-sided $\alg$-module with $\alg$-norm $\|\cdot\|$}}. \notag
\end{equation}
In order to introduce the notion of $X$-valued slice function, we consider $X$ as a real vector space, i.e. $_\erre X$, and we define in $X \times X$ a structure of complex vector space by defining the standard componentwise sum 
and the scalar multiplication $\ci \times (X \times X) \funzione (X \times X): (z,v) \longmapsto zv$:
\begin{equation}\label{product by a complex scalar in A_C}
 (r + si)(x,y) := (rx - sy, ry + sx)
\end{equation}
for $z = r + s i$, $v = (x, y)$, $r, s \in \erre$, $x, y \in X$. Endowing $X \times X$ with this complex vector space structure, we obtain the so-called complexification $X \otimes_\erre \ci$ of $X$. The \emph{complex conjugation} of 
$v = (x,y) \in X \otimes_\erre \ci$ is defined by 
\begin{equation}
  \vbar := (x,-y). \notag
\end{equation}  
We make $X \otimes_\erre \ci$ a real Banach space by defining $\norma{(x,y)}{} := \max\{\norma{x}{}, \norma{y}{}\}$ for 
$(x,y) \in X \otimes_\erre \ci$, thus if $D$ is a nonempty open subset of $\ci \simeq \erre^2$, then 
$C^1(D;X \otimes_\erre \ci)$ will denote the set of real continuously differentiable functions from $D$ into 
$X \otimes_\erre \ci$ in the sense of differential calculus in real Banach spaces. If in addition 
\begin{equation}
  \text{$X$ is a Banach two-sided $\alg$-algebra with unit $1_X$,} \notag
\end{equation}
the following product makes $X \otimes_\erre \ci$ a complex algebra: 
\begin{equation}\label{product in A_C}
  (x,y)(x',y'):=(xx'-yy',xy'+yx').  \notag
\end{equation}
By setting $\1 := (1_X, 0) \in X \otimes_\erre \ci$ and $\ui := (0,1_X) \in X \otimes_\erre \ci$, every 
$v = (x,y) \in X \otimes_\erre \ci$ can be uniquely written in the form $v = x1 + y\ui = x + y \ui$, and $\ui$ is called an \emph{imaginary unit}: $X \otimes _\erre \ci = X + X \ui = \{v = x+y\ui\ :\ x, y \in X\}$ and $\ui^2= -1$. Observe that 
$iv = \ui v = v \ui$ for every $v \in X \otimes_\erre\ci$ and the structure of real vector space induced by the scalar multiplication \eqref{product by a complex scalar in A_C} with $s = 0$ is the same of $_\erre X \times\ \!\! _\erre X$.

\begin{Rem}
Let $D$ be a nonempty open subset of $\ci$. Using \cite[Theorem 3.31, p. 79]{Rud73}, the vector Cauchy integral formula and standard complex analysis arguments for scalar functions, it is easy to check that the following statements are equivalent:
\begin{itemize}
\item[(i)] 
  $F \in C^{1}(D;X \otimes_\erre \ci)$ and $\frac{\partial F}{\partial r}+ i \frac{\partial F}{\partial s}  = 0$.
\item[(ii)]
  $F$ is complex differentiable in $D$.
\item[(iii)]
  $z \longmapsto \dualita{L}{F(z)}$ is holomorphic in $D$ for every $\ci$-linear continuous 
  $L : X \otimes_\erre \ci \funzione \ci$. \bs
\end{itemize}
\end{Rem}

In the remaining part of this section, \textit{$D$ will denote a nonempty subset of $\ci$ invariant under complex conjugation.}

\begin{Def} \label{def:slice-regular-functions}
A function $F = (F_1,F_2): D \funzione X \otimes_\erre \ci$ is said to be a \emph{stem function} if 
\begin{equation}
  F(\zbar) = \overline{F(z)} \qquad \forall z \in D, \notag
\end{equation}
i.e. $F_1(\zbar) = F_1(z)$ and $F_2(\zbar) = -F_2(z)$ for every $z \in D$.

Let $\Omega_D$ be the circular subset of $Q_\alg$ associated to $D$ and, for every $\j \in \su_\alg$, let 
$\phi_\j:\ci \funzione \ci_\j$ be the isomorphism $\phi_\j(r+si)=r+s\j$ (cf. Definition \ref{circular sets}). We say that 
$f : \Omega_D \funzione X$ is a \emph{($X$-valued) right slice function} if there exists a stem function 
$F = (F_1,F_2): D \funzione X \otimes_\erre \ci$ such that 
\begin{equation}\label{representation of f via F}
  f(\phi_\j(z)) = F_1(z) + F_2(z)\j \qquad \forall z \in D, \quad \forall \j \in \su_\alg. 
\end{equation}
In this case, we write $f = \II_\r(F)$. In the reminder of the paper we will set 
$f_\j := f \circ \phi_\j : D \funzione X_\j$.
\end{Def}

The right slice function $f$ is well-defined and it is induced by a unique stem function. Indeed, if $r \in \erre$, then $F_2(r)=0$ (being $F_2(\overline{z})=-F_2(z)$) and $f(r)=F_1(r)$ independently from the choice of $\j \in \su_\alg$. If 
$q \in Q_\alg \setmeno \erre$, then it admits two representations $q=\phi_\j(z)=\phi_{-\j}(\overline{z})$ with 
$z \in D \setmeno \erre$ and $\j \in \su_\alg$. However, $f(q)$ is uniquely determined by $F$:
\[
f(\phi_\j(z))=F_1(z)+F_2(z)\j=F_1(\overline{z})+F_2(\overline{z})(-\j)=f(\phi_{-\j}(\overline{z})).
\]
The stem function $F$ is in turn uniquely determined by $f$:
\begin{equation} \label{repr. formula 2}
F_1(z)=\frac{1}{2}\left(f(q) + f(q^c)\right),
\qquad
F_2(z)=-\frac{1}{2}\left(f(q) - f(q^c)\right)\j
\end{equation}
if $z \in D$, $\j \in \su_\alg$ and $q=\phi_\j(z)$. The latter equalities imply the following 
\textit{representation formula for right slice functions} $f$:
\begin{equation}\label{repr. formula 1}
  f(r+s\k) = \frac{1}{2}\left(f(q) + f(q^c)\right) - \frac{1}{2}\left(f(q) - f(q^c)\right)\j\k
\end{equation}
if $q=r+s\j \in \Omega_D$, $r,s \in \erre$ and $\j,\k \in \su_\alg$.

We now introduce the notion of slice regularity for vector-valued mappings.

\begin{Def}
Let $D \subseteq \ci$ be open and let $f : \Omega_D \funzione X$ be a right slice function with $f = \II_\r(F)$. We say that 
$f$ is \emph{right slice regular} if $F$ is holomorphic in $D$, i.e. if $F \in C^1(D;X \otimes_\erre \ci)$ and 
\begin{align} \label{eq:s-r-f}
 \frac{\partial F}{\partial r} + i\frac{\partial F}{\partial s} = 0,
\end{align}
where $(r,s)$ denote the real coordinates in $\ci$. If $F=(F_1,F_2)$, then (\ref{eq:s-r-f}) is equivalent to
\begin{equation} \label{eq:c-r-type-system}
 \frac{\partial F_1}{\partial r} = \frac{\partial F_2}{\partial s}, \qquad
 \frac{\partial F_1}{\partial s} = -\frac{\partial F_2}{\partial r}.
\end{equation}
\end{Def}

\begin{Def}
The notions of left slice and left slice regular functions are completely analogous to the right ones. We say that 
$f:\Omega_D \funzione X$ is a \emph{left slice function} if there exists a (unique) stem function 
$F = (F_1,F_2) : D \funzione X \otimes_\erre \ci$ such that $f(\phi_\j(z)) = F_1(z) + \j F_2(z)$ for every $z \in D$ and 
$\j \in \su_\alg$. In this case, we write $f=\II_\ell(F)$. If $D$ is open in $\ci$ and $F$ is holomorphic in $D$, then $f$ is called \textit{left slice regular}. 
\end{Def}

\begin{Example} \label{exa:elementary}
(a)
If $c \in X$, then the constant function $f : \Omega_D \funzione X : q \longmapsto c$ is obviously both left and right regular. In the reminder of the paper we will denote the constant functions by its constant value: $f = c$.

(b)
If $X$ is a Banach two-sided $\alg$-algebra, $c \in X$, and $f : \Omega_D \funzione X$ is right slice regular with 
$f = \II_\r(F)$, then $g : \Omega_D \funzione X : q \longmapsto cf(q)$ is right slice regular, since $g = \II_\r(G)$, where $G(z) = cF(z)$ is a holomorphic stem function. On the other hand, in general $q \longmapsto f(q)c$ is not a right slice function, but it is left slice regular if $f$ is. \bs 
\end{Example}


\begin{Prop}\label{f regular iff f slice holomorphic}
Let $D \subseteq \ci$ be open and let $f : \Omega_D \funzione X$ be a right slice function.
Then the following statements are equivalent.
\begin{itemize}
\item[(i)]
  $f$ is right slice regular.
\item[(ii)]
  $f_\j := f \circ \phi_\j: D \funzione X_\j$ is holomorphic for every $\j \in \su_\alg$.
\item[(iii)]
  There exists $\j \in \su_\alg$ such that $f_\j: D \funzione X_\j$ is holomorphic.
\end{itemize}
\end{Prop}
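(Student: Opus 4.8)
The plan is to prove the chain of implications $(i) \Rightarrow (ii) \Rightarrow (iii) \Rightarrow (i)$, exploiting the fact that slice regularity is a statement about the stem function $F$, while conditions $(ii)$ and $(iii)$ are statements about restrictions $f_\j$ of $f$ itself. The bridge between the two viewpoints is the defining identity $f(\phi_\j(z)) = F_1(z) + F_2(z)\j$, i.e. $f_\j = F_1 + F_2\j$, together with the complex structure on $X_\j$ given by $zx = x\phi_\j(z)$ (Definition \ref{D: complex space X_j}).

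First I would dispatch $(i) \Rightarrow (ii)$. Assume $f = \II_\r(F)$ with $F$ holomorphic, so $F = (F_1, F_2) \in C^1(D; X\otimes_\erre\ci)$ satisfies the Cauchy--Riemann type system \eqref{eq:c-r-type-system}. Fix $\j \in \su_\alg$. Since $F_1, F_2 : D \funzione {}_\erre X$ are $C^1$ and $f_\j(r+si) = F_1(r+si) + F_2(r+si)\j$, the map $f_\j : D \funzione X_\j$ is $C^1$ in the real sense, and
\[
  \frac{\partial f_\j}{\partial r} = \frac{\partial F_1}{\partial r} + \frac{\partial F_2}{\partial r}\j, \qquad
  \frac{\partial f_\j}{\partial s} = \frac{\partial F_1}{\partial s} + \frac{\partial F_2}{\partial s}\j.
\]
To check holomorphy of $f_\j$ in $X_\j$ I need $\partial f_\j/\partial r + \j_X \cdot (\partial f_\j/\partial s) = 0$ where $\j_X$ denotes the complex scalar $i$ acting on $X_\j$, i.e. multiplication on the right by $\phi_\j(i) = \j$. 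So the condition reads $\partial f_\j/\partial r + (\partial f_\j/\partial s)\j = 0$. Substituting and using $\j^2 = -1$ together with \eqref{eq:c-r-type-system}, the left side becomes $\big(\frac{\partial F_1}{\partial r} - \frac{\partial F_2}{\partial s}\big) + \big(\frac{\partial F_1}{\partial s} + \frac{\partial F_2}{\partial r}\big)\j = 0$. Since $1$ and $\j$ are linearly independent over $\erre$ (Proposition \ref{useful properties}(a)) and $X$ is an $\erre$-vector space, this is exactly \eqref{eq:c-r-type-system}, so $f_\j$ is complex differentiable, hence holomorphic by the Remark following \eqref{product in A_C} (item (i) $\Leftrightarrow$ (ii)).

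The implication $(ii) \Rightarrow (iii)$ is trivial (take any $\j \in \su_\alg$, nonempty by \eqref{S_A nonvuota}). For $(iii) \Rightarrow (i)$ the task is to recover holomorphy of the full stem function $F$ from holomorphy of a single restriction $f_\j$. The key is the reconstruction formula \eqref{repr. formula 2}: for $z \in D$ and $q = \phi_\j(z)$ one has $q^c = \phi_{-\j}(z) = \phi_\j(\bar z)$, hence $f(q) = f_\j(z)$ and $f(q^c) = f_\j(\bar z)$, so
\[
  F_1(z) = \tfrac12\big(f_\j(z) + f_\j(\bar z)\big), \qquad
  F_2(z) = -\tfrac12\big(f_\j(z) - f_\j(\bar z)\big)\j.
\]
From this I argue as follows: $z \longmapsto f_\j(z)$ is holomorphic in $D$ as a map into the complex Banach space $X_\j$; the map $z \longmapsto f_\j(\bar z)$ is \emph{anti}holomorphic; so $F_1$ and $F_2$, viewed as maps $D \funzione {}_\erre X$, are real-$C^1$. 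Running the computation of $(i)\Rightarrow(ii)$ backwards — i.e. inserting the Cauchy--Riemann relation for $f_\j$ in $X_\j$, namely $\partial f_\j/\partial r = -(\partial f_\j/\partial s)\j$ — into the derivatives of $F_1, F_2$ obtained by differentiating the displayed formulas yields \eqref{eq:c-r-type-system}, whence $F$ is holomorphic and $f = \II_\r(F)$ is right slice regular. The main obstacle, and the point deserving care, is the bookkeeping of \emph{which side} the imaginary unit acts on: the complex structure on $X_\j$ is right multiplication by $\j$, so the Cauchy--Riemann condition for $f_\j$ picks up $\j$ on the right, and one must verify that the $\erre$-bilinearity of the module action together with the independence of $\{1,\j\}$ really does let one separate the ``$1$-component'' and ``$\j$-component'' of the vector identity to land back on the genuine system \eqref{eq:c-r-type-system} for $F_1, F_2$; everything else is routine differentiation under the stem/slice correspondence.
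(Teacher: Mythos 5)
Your proof is correct and follows essentially the same route as the paper's: the direct computation for (i)$\Rightarrow$(ii) and the use of the reconstruction formula \eqref{repr. formula 2} for (iii)$\Rightarrow$(i) are exactly the paper's argument. One caution about the point you single out as delicate: ``separating the $1$- and $\j$-components'' of a vector identity is neither valid nor needed in a general two-sided $\alg$-module, since $x+y\j=0$ does not force $x=y=0$ (take $x=1$, $y=\j$ in $X=\quat$); what actually closes the argument --- and what both you and the paper do --- is that \eqref{repr. formula 2} gives $F_1$ and $F_2$ explicitly in terms of $f_\j(z)$ and $f_\j(\overline{z})$, so \eqref{eq:c-r-type-system} follows by differentiating those formulas directly rather than by splitting components.
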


\begin{proof}
Assume that$f = \II_\r(F)$ with $F = (F_1, F_2)$. Since $f_\j(z) = f(\phi_\j(z)) = F_1(z) + F_2(z)\j$, recalling
\eqref{product defining X_j}, if $f$ satisfies (i), then it holds 
\begin{align}
  \frac{\partial f_\j}{\partial r} + i \frac{\partial f_\j}{\partial s} 
    & = \frac{\partial F_1}{\partial r} + \frac{\partial F_2}{\partial r}\j + 
          \left(\frac{\partial F_1}{\partial s} + \frac{\partial F_2}{\partial s}\j\right)\j \notag \\
    & = \frac{\partial F_1}{\partial r} + \frac{\partial F_2}{\partial r}\j + 
          \frac{\partial F_1}{\partial s}\j - \frac{\partial F_2}{\partial s}\notag \\      
    & = \frac{\partial F_1}{\partial r} - \frac{\partial F_2}{\partial s} + 
          \left(\frac{\partial F_2}{\partial r} + \frac{\partial F_1}{\partial s}\right)\j=0. \notag
\end{align}
This proves (ii). The implication $(\mathrm{ii}) \Longrightarrow (\mathrm{iii})$ is evident. Finally, suppose (iii) holds, i.e. 
$\frac{\partial f_\j}{\partial r}+\frac{\partial f_\j}{\partial s} \j=0$ for some $\j \in \su_\alg$. Thanks to \eqref{repr. formula 2}, we infer that
\begin{align*}
2\frac{\partial F_1}{\partial r}(z) & = \frac{\partial f_\j}{\partial r}(z)+\frac{\partial f_\j}{\partial r}(\overline{z})=
-\frac{\partial f_\j}{\partial s}(z)\j-\frac{\partial f_\j}{\partial s}(\overline{z})\j=2\frac{\partial F_2}{\partial s}(z)
\end{align*}
for every $z \in D$. Similarly, we obtain also the second equality of \eqref{eq:c-r-type-system}, and (i) follows.
\end{proof}

\begin{Rem}
If $\alg = \quat$ and $D \subseteq \ci$ is connected, then Proposition \ref{f regular iff f slice holomorphic} entails that 
a function $f : \Omega_D \funzione \quat$ is slice regular if and only if it is regular in the sense of \cite[Definition 2.2]{GeSt}.
\bs
\end{Rem}

For $X$-valued slice regular functions the following extension lemma holds.

\begin{Lem}\label{slice analytic continuation}
Let $D \subseteq \ci$ be open and connected, and let $f : \Omega_D \funzione X$ be a right slice regular function. If 
$f(q) = 0$ for all $q \in \Omega_D \cap \erre$, then $f = 0$.
\end{Lem}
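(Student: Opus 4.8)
The plan is to lift $f$ to the stem function that induces it, to observe that this stem function vanishes on the real points of $D$, and then to apply the identity principle on the connected domain $D$. Write $f = \II_\r(F)$ with $F = (F_1,F_2) \colon D \funzione X \otimes_\erre \ci$ the (unique) holomorphic stem function inducing $f$. First I would record the elementary topological fact that $D$ meets the real axis: if $D \cap \erre = \varnothing$, then $D$ would be the disjoint union of the open sets $D^+ := \{z \in D : \Im z > 0\}$ and $D^- := \{z \in D : \Im z < 0\}$, both nonempty because $D \neq \varnothing$ is invariant under conjugation, contradicting connectedness. Hence $D \cap \erre$ is a nonempty open subset of $\erre$; in particular none of its points is isolated, so $D \cap \erre$ is a subset of $D$ having an accumulation point in $D$.

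Next I would check that $F$ vanishes on $D \cap \erre$. For $r \in D \cap \erre$ one has $F_2(r) = 0$, since the stem condition gives $F_2(r) = F_2(\bar r) = -F_2(r)$; moreover $\Omega_D \cap \erre = D \cap \erre$ and, by the representation formula \eqref{representation of f via F}, $F_1(r) = f(r) = 0$ by hypothesis. Thus $F = F_1 + F_2 \ui$ vanishes identically on $D \cap \erre$, which has an accumulation point in the connected open set $D$. The identity principle then yields $F \equiv 0$ on $D$: composing with an arbitrary continuous $\ci$-linear functional $L \colon X \otimes_\erre \ci \funzione \ci$, the scalar function $z \longmapsto \dualita{L}{F(z)}$ is holomorphic (cf.\ the Remark preceding Definition~\ref{def:slice-regular-functions}) and vanishes on $D \cap \erre$, hence on all of $D$ by the classical identity theorem; since such functionals separate the points of $X \otimes_\erre \ci$ (Hahn--Banach), we get $F \equiv 0$, and therefore $f = \II_\r(F) = 0$ on $\Omega_D$.

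Everything above is routine, so I do not expect a genuine obstacle; the points that most deserve care are the topological observation that a nonempty connected conjugation-invariant open set must meet $\erre$ (this is precisely where the connectedness hypothesis enters, and without it the conclusion fails) and the reduction of the vector-valued identity theorem to the scalar one via linear functionals. One could equally avoid the complexification and argue slicewise: by Proposition~\ref{f regular iff f slice holomorphic} each $f_\j = f \circ \phi_\j \colon D \funzione X_\j$ is holomorphic with values in the complex Banach space $X_\j$, it vanishes on $D \cap \erre$ (where $f_\j = f = 0$), hence identically on $D$ by the identity theorem for Banach-space-valued holomorphic functions; since $\j \in \su_\alg$ is arbitrary and $\Omega_D = \bigcup_{\j \in \su_\alg} \phi_\j(D)$, this again gives $f = 0$.
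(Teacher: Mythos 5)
Your proposal is correct and follows essentially the same route as the paper: pass to the unique stem function $F=(F_1,F_2)$, note that the stem condition forces $F_2=0$ on $D\cap\erre$ while the hypothesis forces $F_1=0$ there, and conclude $F\equiv 0$ by the identity principle on the connected set $D$ (the paper leaves the nonemptiness of $D\cap\erre$ and the vector-valued identity theorem implicit, whereas you spell both out). The additional slicewise argument via Proposition~\ref{f regular iff f slice holomorphic} is a valid alternative but not needed.
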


\begin{proof}
Since $D$ is connected and invariant under complex conjugation, then $D \cap \erre \neq \vuoto$. Assume that 
$f = \II_\r(F)$ with $F = (F_1,F_2)$. Let $r \in D \cap \erre$. Since $F(\zbar) = \overline{F(z)}$, we have
$F_2(r) = 0$. Choose $\j \in \su_\alg$. Then
\[
  0 = f(\phi_\j(r)) = F_1(r) + F_2(r) \j = F_1(r),
\]
thus $F(r) = 0$ for every $r \in D \cap \erre$. Hence $F = 0$ in $D$ and the lemma follows from 
\eqref{representation of f via F}.
\end{proof}

If $X$ is also a Banach two-sided $\alg$-algebra with unit, then one can perform the pointwise product of two right slice functions. However, it is not in general a right slice function. On the contrary, it is immediately seen that the pointwise product of two stem functions is a stem function. Therefore it is possible to define the following notion of slice product which generalizes the convolution product between power series with coefficients in $X$.

\begin{Def}\label{D:slice product}
Let $X$ be a Banach two-sided $\alg$-algebra with unit, and let $f : \Omega_D \funzione X$ and 
$g : \Omega_D \funzione X$ be two right slice functions with $f = \II_\r(F)$, $g = \II_\r(G)$ and $F = F_1+F_2\,\ui$, 
$G = G_1+G_2\,\ui$. The \emph{(right) slice product of $f$ and $g$} is the right slice function $f \cdot g  := \II_\r(FG)$, where $FG$ is the pointwise product of $F$ and $G$:
\begin{align}
  (FG)(z) & := \big(F_1(z) G_1(z) - F_2(z) G_2(z)\big)+\big(F_1(z) G_2(z) + F_2(z) G_1(z)\big)\,\ui \qquad \forall z \in D. \notag
\end{align}
\end{Def}

Since a real bilinear product of two holomorphic vector functions is holomorphic, we immedia\-tely get the following result.

\begin{Prop}
The slice product of two right slice regular functions is right slice regular.
\end{Prop}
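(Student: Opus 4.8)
The plan is to reduce the statement to the definitional fact that a function is right slice regular precisely when its associated stem function is holomorphic, together with the observation that the stem function attached to the slice product is, by construction, the pointwise product of the two original stem functions. So the whole content of the proposition is the elementary remark that the pointwise product of two holomorphic $X\otimes_\erre\ci$-valued maps (for the real-bilinear algebra product on $X\otimes_\erre\ci$) is again holomorphic.

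Concretely, first I would fix $f,g:\Omega_D\funzione X$ right slice regular with $f=\II_\r(F)$, $g=\II_\r(G)$, where $D\subseteq\ci$ is open, and by definition of right slice regularity both $F$ and $G$ lie in $C^1(D;X\otimes_\erre\ci)$ and satisfy $\partial F/\partial r+i\,\partial F/\partial s=0$ and $\partial G/\partial r+i\,\partial G/\partial s=0$. By Definition \ref{D:slice product}, $f\cdot g=\II_\r(FG)$ where $FG$ is the pointwise product in the complex algebra $X\otimes_\erre\ci$. So it suffices to show $FG$ is holomorphic in $D$, i.e.\ $FG\in C^1(D;X\otimes_\erre\ci)$ and $\partial(FG)/\partial r+i\,\partial(FG)/\partial s=0$.

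For the $C^1$ part, the map $(v,w)\longmapsto vw$ on $X\otimes_\erre\ci$ is real-bilinear and continuous (it is continuous for the norm $\norma{(x,y)}{}=\max\{\norma x{},\norma y{}\}$ because $X$ is a Banach two-sided $\alg$-algebra, so $\norma{vw}{}\le c\norma v{}\norma w{}$ for a suitable constant $c$), hence smooth as a map between real Banach spaces; composing with the $C^1$ map $z\longmapsto (F(z),G(z))$ keeps us in $C^1$, and the Leibniz rule gives $\partial(FG)/\partial t = (\partial F/\partial t)G + F(\partial G/\partial t)$ for $t\in\{r,s\}$. For holomorphy, I would simply compute
\begin{align*}
 \frac{\partial(FG)}{\partial r} + i\frac{\partial(FG)}{\partial s}
  &= \Big(\frac{\partial F}{\partial r}\Big)G + F\Big(\frac{\partial G}{\partial r}\Big)
    + i\Big(\frac{\partial F}{\partial s}\Big)G + iF\Big(\frac{\partial G}{\partial s}\Big) \\
  &= \Big(\frac{\partial F}{\partial r} + i\frac{\partial F}{\partial s}\Big)G
    + F\Big(\frac{\partial G}{\partial r} + i\frac{\partial G}{\partial s}\Big) = 0,
\end{align*}
where the only subtlety is that multiplication by the scalar $i$ in $X\otimes_\erre\ci$ coincides with left or right multiplication by the algebra element $\ui$ (recall $iv=\ui v=v\ui$ for all $v$), which commutes past the bilinear product in the evident way, so $i(\partial F/\partial s)G = (i\,\partial F/\partial s)G$ and $iF(\partial G/\partial s)=F(i\,\partial G/\partial s)$. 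Thus $FG$ is a holomorphic stem function, $f\cdot g=\II_\r(FG)$ is a right slice function whose stem function is holomorphic, and hence $f\cdot g$ is right slice regular.

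There is essentially no obstacle here: the only point requiring a line of care is bookkeeping about how the formal imaginary unit $i$ acting on $X\otimes_\erre\ci$ interacts with the (noncommutative) product on $X\otimes_\erre\ci$, and that is settled by the identities $iv=\ui v=v\ui$ recorded just before Definition \ref{def:slice-regular-functions}. If one prefers to avoid even that computation, an alternative is to invoke the third characterization in the Remark before Definition \ref{def:slice-regular-functions}: for every $\ci$-linear continuous $L$ on $X\otimes_\erre\ci$, both $z\longmapsto\dualita{L}{F(z)}$-type pairings are holomorphic, and one reduces the holomorphy of $FG$ to the classical fact that a product of scalar holomorphic functions is holomorphic after testing against functionals that factor through the bilinear product; but the direct Leibniz computation above is cleaner and is the route I would take.
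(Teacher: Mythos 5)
Your proof is correct and follows the same route as the paper, which compresses the whole argument into the single observation that a real bilinear product of two holomorphic vector-valued functions is holomorphic; your Leibniz computation and the remark that $iv=\ui v=v\ui$ lets $i$ pass through the product are exactly the details behind that one-line justification.
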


Some more words are necessary concerning the notation for the slice product. If we want to stress the role
of the independent variable, the following notation is convenient:
\[
  f(q) \cdot_q g(q) := (f \cdot g)(q).
\]
This is especially useful when the functions $f$ and $g$ depend on several variables:
\[
  f(p,q) \cdot_q g(p,q) := (f(p,\cdot) \cdot g(p,\cdot))(q).
\]
Dealing with slice powers, the following notation will be also used for $n \in \enne$:
\[
  f(p,q)^{\cdot_q n} := \underbrace{f(p,q) \cdot_q f(p,q) \cdot_q \cdots \cdot_q f(p,q)}_{\text{$n$ times}},
\]
i.e. this $n$-th power is the slice product with respect to $q$ of $f(p,q)$ with itself computed $n$ times. 


\section{Examples of vector slice regular functions}\label{S:slice regular examples}

In this section we consider some fundamental examples of vector-valued right slice regular functions, which will be exploited in the remainder of the paper. Throughout the section we assume that
\[
\text{\emph{$\alg$ satisfies \eqref{assumption on A} and \eqref{eq:assumption-2}.}}
\]


\subsection{Right power series}\label{Es funzioni right slice regular}

Let $(c_n)$ be a sequence in a Banach two-sided $\alg$-module $X$. Consider the  series $s=\sum_{n \ge 0} c_n q^n$ with $q \in Q_\alg$. Thanks to Proposition \ref{useful properties}(b) and Lemma \ref{norma moltiplicativa}, we know that 
$\|c_nq^n\|=\|c_n\| |q|^n$. This equality ensures the validity of Abel theorem for $s$. In other words, if $R \in [0,+\infty]$ is defined by $1/R:=\limsup_{n \to +\infty}\sqrt[n]{\|c_n\|}$ and if $R > 0$, then $s$ converges totally on compact subsets of the ball $\Omega_R:=\{q \in Q_\alg \, : \, |q|<R\}$. The sum function 
$\Omega_R \funzione X: q \longmapsto \sum_{n \ge 0} c_n q^n$ of $s$, we denote again by $s$, is right slice regular. Indeed, if $B_R$ is the Euclidean open ball of $\ci$ centered at $0$ of radius $R$ and $S_1,S_2:B_R \funzione X$ are functions defined by $S_1(z) := \sum_{n \ge 0} c_n \Re(z^n)$ and$S _2(z) := \sum_{n \ge 0} c_n \Im(z^n)$, then 
$S = (S_1, S_2)$ is a holomorphic stem function and $s = \II_\r(S)$. We have just seen that 
\emph{convergent power series with left coefficients in $X$ are right slice regular}. In general, convergent power series with right coefficients are not right slice functions, but they are left slice regular functions.


\subsection{Noncommutative exponentials}

In this subsection we introduce some noncommutative generalizations of the complex exponential functions 
$z \longmapsto e^{a z}$, where $a$ is a vector of a complex Banach algebra. In what follows, we also give the definition of the ``slice translation'' of these exponential mappings. First we recall that there exists a positive constant $C$, depending only on $\alg$, such that
\[
|(p+q)^{\cdot_q n}| \le C\big((\re(p) + \re(q))^2 + (|\im(p)| + |\im(q)|)^2\big)^{n/2}
\]
for every $p, q \in Q_\alg$ and for every $n \in \enne$ (cf. \cite[Inequality (3.2)]{GhiPer14}). Therefore the series in the following formula \eqref{exp^x_p} is convergent on the whole $Q_\alg$.

\begin{Def}
Let $X$ be a Banach two-sided $\alg$-algebra, let $x \in X$ and let $p \in Q_\alg$. We define the right slice regular function $\exp_{p}^x : Q_\alg \funzione X$ by setting
\begin{align}\label{exp^x_p}
 \exp_{p}^x(q) := \sum_{n \ge 0} \frac{x^n}{n!} (p+q)^{\cdot_q n}, \qquad 
 q \in Q_\alg.
\end{align}
For $p = 0$ we simply set $\exp^x := \exp_0^x$, i.e.
\begin{equation*}
  \exp^x(q) := \exp_0^x = \sum_{n \ge 0} \frac{x^n}{n!} q^{n}, \qquad 
  q \in Q_\alg.
\end{equation*}
We will also write $e^x := \exp^x(1) = \sum_{n \ge 0} \frac{x^n}{n!}$, i.e. the usual exponential function in $_\erre X$.
\end{Def}

Here are the properties of the ``non-commutative'' exponential.

\begin{Lem}\label{L:the exponential exp_c(q)}
Let $X$ be a Banach two-sided $\alg$-algebra, let $x \in X$ and let $p \in Q_\alg$. Then the following propositions hold.
\begin{itemize}
\item[(i)]
 Let $q \in Q_\alg$ with $p+q \in Q_\alg$. If either $x^2=0$ or $pq = qp$, then
  \begin{equation}\label{exp_q(p) if pq=qp}
    \exp^x_p(q) = \exp^x(p+q).
    \end{equation}
A partial vice versa is true. If $\exp^x_{tp}(tq) = \exp^x(t(p+q))$ for every $t \in \erre$, then either $x^2=0$ or $pq=qp$ or $pq-qp$ is a left zero divisor of $\alg$ (that is, $pq-qp \neq 0$ and $(pq-qp)a=0$ for some $a \in \alg \setmeno \{0\}$).
 \item[$(\mr{i}')$] Assume $xp=px$. Let $q \in Q_\alg$ with $p+q \in Q_\alg$. If either $x^2=0$ or $pq = qp$, then
  \begin{equation}\label{semigroup-law}
    \exp^x(p+q) = \exp^x(p)\exp^x(q).
    \end{equation}
In particular this equality holds if $p,q \in \erre$. A partial vice versa is true. If $\exp^x(t(p+q))=\exp^x(tp)\exp^x(tq)$ for every $t \in \erre$, then either $x^2=0$ or $pq=qp$ or $pq-qp$ is a left zero divisor of $\alg$.
 \item[(ii)]
If $xq=qx$ for some $q \in Q_\alg$, then 
\begin{equation} \label{eq:eee}
\exp^x(q)=e^{xq}.
\end{equation}
A~partial vice versa is true. If $\exp^x(tq)=e^{xtq}$ for every $t \in \erre$, then either $xq=qx$ or $x$ is a left zero divisor of $X$.
 \item[(iii)]
  $\exp_p^x$ is the unique right slice regular function on $Q_\alg$ such that 
  $\exp_p^x(t) = \sum_{n \ge 0} \frac{x^n}{n!} (p+t)^{n}$ for every $t \in \erre$.
\item[(iv)] 
  If $t \in \erre$ then $Q_\alg \funzione X : q \longmapsto \exp_q^x(t)$ is right slice regular. 
\end{itemize}
\end{Lem}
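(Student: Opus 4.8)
The key tool throughout is Proposition \ref{f regular iff f slice holomorphic}: a right slice function is right slice regular precisely when one (equivalently every) restriction $f_\j$ is holomorphic. Since all the functions occurring in the statement are visibly right slice functions (they are sums of totally convergent series of the form $\sum c_n (p+q)^{\cdot_q n}$, or of the form $\sum c_n q^n$, whose stem functions are those written down in Subsection \ref{Es funzioni right slice regular}), the regularity assertions in (iii) and (iv) reduce to verifying holomorphy of an explicit vector-valued power series in $z$; this is done exactly as in Subsection \ref{Es funzioni right slice regular} using the bound from \cite[Inequality (3.2)]{GhiPer14} for total convergence on compacts and then differentiating term by term. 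The uniqueness in (iii) is then immediate from Lemma \ref{slice analytic continuation}: two right slice regular functions on the connected circular set $Q_\alg$ (note $Q_\alg = \Omega_\CC$) that agree on $Q_\alg \cap \erre = \erre$ must coincide.

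For part (i), the plan is to compute both sides as series. On the left, $\exp^x_p(q) = \sum_n \frac{x^n}{n!}(p+q)^{\cdot_q n}$; on the right, $\exp^x(p+q) = \sum_m \frac{x^m}{m!}(p+q)^m$ (ordinary powers). The crucial elementary identity is that $(p+q)^{\cdot_q n}$ expands by a ``noncommutative binomial'' in which $p$ is kept to the left of $q$ and the factors $p,q$ are \emph{not} commuted, whereas $(p+q)^n$ is the genuine power; the two agree term by term exactly when $pq = qp$. When $x^2 = 0$ only the $n \le 1$ terms survive and both sides equal $1_X + x(p+q)$, so again they agree. For the partial converse I would differentiate the hypothesis $\exp^x_{tp}(tq) = \exp^x(t(p+q))$ twice at $t = 0$ (or compare the coefficient of $t^2$ in the two series): the $t^2$-coefficients are $\tfrac12 x^2(p^2 + pq + pq + q^2)$ versus $\tfrac12 x^2(p^2 + pq + qp + q^2)$ — here I must be careful about the order $x$ versus $p+q$, but both sides have the $x^2$ on the left — so the difference is $\tfrac12 x^2(pq - qp)$, whence $x^2(pq-qp) = 0$. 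If $x^2 \ne 0$ and $pq \ne qp$ this exhibits $pq - qp \ne 0$ as a left zero divisor of $\alg$ (with witness coming from a nonzero entry of $x^2$ acting on some vector). Part $(\mathrm{i}')$ follows from (i) combined with Lemma \ref{L:the exponential exp_c(q)}(ii)/(iii): under $xp = px$ one has $\exp^x(p) = \exp^{xp}(1)$-type identities, and when $pq = qp$ the three elements $x, p, q$ can be handled as commuting power series, so $\exp^x(p+q) = e^{x(p+q)} = e^{xp}e^{xq} = \exp^x(p)\exp^x(q)$; the converse is the same $t^2$-coefficient computation. Part (ii) is the special case $p = 0$ of the comparison: $\exp^x(q) = \sum_n \frac{x^n}{n!}q^n$ equals $e^{xq} = \sum_n \frac{(xq)^n}{n!}$ iff $x^n q^n = (xq)^n$ for all $n$, which holds when $xq = qx$; differentiating $\exp^x(tq) = e^{xtq}$ and comparing $t^2$-coefficients gives $x^2 q^2 = x q x q$, i.e. $x(qx - xq)q = 0$, so either $xq = qx$ or $x$ (equivalently $xq-qx$... one must chase which factor) is a left zero divisor of $X$.

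I expect the main obstacle to be the bookkeeping in the partial converses, specifically identifying the precise algebraic conclusion (``left zero divisor'' rather than just ``$x^2(pq-qp)=0$''): one must translate an annihilation identity in the \emph{module} $X$ or in the \emph{algebra} $\alg$ into the clean statement that a specific nonzero element is a left zero divisor, which requires choosing appropriate test vectors and using that $X$ is nontrivial. The series manipulations themselves — term-by-term differentiation justified by total convergence on compacts, and the noncommutative binomial expansion of $(p+q)^{\cdot_q n}$ — are routine given Subsection \ref{Es funzioni right slice regular} and \cite[Inequality (3.2)]{GhiPer14}, and part (iv) is essentially identical in spirit to (iii) with the roles of the fixed real argument and the slice variable interchanged.
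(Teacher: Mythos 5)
Your overall strategy coincides with the paper's: compare the two series term by term via the noncommutative binomial expansion of $(p+q)^{\cdot_q n}$, extract the coefficient of $t^2$ for the partial converses, obtain slice regularity in (iii) and (iv) from holomorphy of the explicit stem functions together with the symmetry $\exp^x_q(t)=\exp^x_t(q)$, and get uniqueness from Lemma \ref{slice analytic continuation}. Parts (ii), (iii), (iv) and the forward direction of (i) are essentially as in the paper. There are, however, two genuine gaps.

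First, your derivation of $(\mr{i}')$ is not valid as stated. You write $\exp^x(p+q)=e^{x(p+q)}=e^{xp}e^{xq}=\exp^x(p)\exp^x(q)$ on the grounds that ``$x,p,q$ can be handled as commuting power series'', but $(\mr{i}')$ assumes only $xp=px$ and $pq=qp$; it does \emph{not} assume $xq=qx$, so neither $\exp^x(q)=e^{xq}$ nor $e^{x(p+q)}=e^{xp}e^{xq}$ is available. The correct argument is the direct Cauchy-product computation: $\exp^x(p)\exp^x(q)=\sum_n\sum_{k+h=n}\frac{x^k p^k x^h q^h}{k!\,h!}$, where $xp=px$ is used only to move $x^h$ past $p^k$, and $pq=qp$ only to identify $\sum_k\binom{n}{k}p^kq^{n-k}$ with $(p+q)^n$; the case $x^2=0$ is handled separately by noting that the difference of the two sides is a right multiple of $x^2$.

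Second, in the converses of (i) and $(\mr{i}')$ you correctly reach $x^2(pq-qp)=0$, but the passage from this to ``$pq-qp$ is a left zero divisor of $\alg$'' is exactly the step you leave open, and the mechanism you hint at (test vectors, nontriviality of $X$) is not the right one: the identity lives in the module $X$ (the element $x^2\in X$ is annihilated by right multiplication by $pq-qp\in\alg$), so no witness $a\in\alg\setmeno\{0\}$ with $(pq-qp)a=0$ can be read off from it directly. The paper argues by contraposition using the \emph{finite dimensionality} of $\alg$ from assumption \eqref{assumption on A}: if $p':=pq-qp$ is nonzero and not a left zero divisor, then left multiplication by $p'$ is injective, hence surjective, so $p'q'=1$ for some $q'\in\alg$, and then $x^2=(x^2p')q'=0$, a contradiction. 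A related small omission occurs in (ii): from $x(xq-qx)q=0$ you must cancel the right factor $q$, which is legitimate because every nonzero element of $Q_\alg$ is invertible by Proposition \ref{useful properties}(b).
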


\begin{proof}
Let $q \in Q_\alg$ with $p+q \in Q_\alg$. Since
\[
\exp^x_p(q) - \exp^x(p+q)=\frac{x^2}{2}(pq-qp)+\sum_{n \ge 3}\frac{x^n}{n!}\big((p+q)^{\cdot_q n}-(p+q)^n\big),
\]
if either $x^2=0$ or $pq=qp$ then \eqref{exp_q(p) if pq=qp} holds. Suppose that 
$\mr{E}(t):=\exp_{tp}^x(tq)-\exp^{x}(tp+tq)=0$ for every $t \in \erre$. Observe that
\[
\mr{E}(t) 
  = \frac{t^2x^2}{2} (pq-qp) + t^3h_x(t)
\]
for some continuous (real analytic indeed) function $h_x : \erre \lra X$. Thus we have
\[
0 = \lim_{\erre \ni t \to 0} 2\mr{E}(t)t^{-2} =x^2(pq - qp).
\]
If $x^2 \neq 0$ and $p':=pq-qp \neq 0$, then $p'$ must be a left zero divisor. Otherwise, being $\alg$ 
finite dimensional, there would exist $q' \in \alg$ such that $p'q'=1$ and hence $x^2=(x^2p')q'=0$, which is a contradiction. This completes the proof of point (i).

Let us prove $(\mr{i}')$. Suppose $xp=px$. Thank to this hypothesis, we have that $\exp^x(p+q)-\exp^x(p)\exp^x(q)=x^2y$ for some $y \in X$. Thus, \eqref{semigroup-law} is satisfied if $x^2=0$. If instead $p$ commutes with $q$, then
\[
\exp^x(p+q)-\exp^x(p)\exp^x(q)=\sum_{n \geq 2}\frac{x^n}{n!}\sum_{k=0}^n{n \choose k}p^kq^{n-k}-\sum_{n \geq 2}\sum_{k=0}^n\frac{x^k}{k!}p^k\frac{x^{n-k}}{(n-k)!}q^{n-k}=0.
\]
Suppose that $\mr{F}(t):=\exp^x(tp+tq)-\exp^x(tp)\exp^x(tq)$ for every $t \in \erre$. We have that
\[
\mr{F}(t)=\frac{x^2}{2}(qp-pq)+t^3k_x(t)
\]
for some continuous function $k_x:\erre \lra X$, and hence $0=\lim_{\erre \ni t \to 0}2\mr{F}(t)t^{-2}=x^2(qp-pq)$. We can now conclude as above.

The proof of point (ii) is similar. If $xq=qx$, then \eqref{eq:eee} is evident. Suppose that $\mr{G}(t):=\exp^x(tq)-e^{xtq}=0$ for every $t \in \erre$. Since
\[
\mr{G}(t)=\sum_{n \geq 2}\big(x^nq^n-(xq)^n\big)\frac{t^n}{n!}=\frac{t^2}{2}x(xq-qx)q+t^3\ell_x(t)
\]
for some continuous function $\ell_x:\erre \lra X$, it follows that $0=\lim_{\erre \ni t \to 0}2\mr{G}(t)t^{-2}=x(xq-qx)q$ and hence $0=x(xq-qx)$ if $q \neq 0$. Thus either $xq=qx$ or $x$ is a left zero divisor of $X$.

Point (iii) is a consequence of (i) and Lemma \ref{slice analytic continuation}, while point (iv) follows from (i) and (iii), being 
$\exp^x_q(t)=\exp^x(q+t)=\exp^x_t(q)$.
\end{proof}


\subsection{Operatorial slice composition}
Assume that
\[
\text{\emph{$X$ is a Banach two-sided $\alg$-module.}}
\]
The slice product deserves a particular attention when functions take on values in the set $\Lin^\r(X)$ whose product is the composition of operators. In order to avoid any notational ambiguity we explicitly state we consider the following product
\[
  \Lin^\r(X) \times \Lin^\r(X) \funzione \Lin^\r(X) : (\A, \B) \longmapsto \A\B := \A \circ \B,
\]
i.e. $(\A\B)(x) = \A(\B x)$ for every $x \in X$, which makes $\Lin^\r(X)$ a Banach two-sided $\alg$-algebra with unit. In this special case we will adopt a new symbol for the \emph{slice composition} of operatorial functions, i.e. the slice product of operatorial functions $q \longmapsto \F(q), q \longmapsto \G(q)$ will be denoted with the symbol ``$\odot$'' rather than the dot ``$\cdot$''. For the sake of clarity we formalize this notation in the following definition.

\begin{Def}\label{D: slice composition}
Let $D$ be a nonempty subset of $\ci$ invariant under complex coniugation and let $\Omega_D$ be the circular subset of $Q_\alg$ associated to $D$. Consider two right slice functions $\F : \Omega_D \funzione \Lin^\r(X)$ and 
$\G : \Omega_D \funzione \Lin^\r(X)$ with $\F = \II_\r(\so{F})$, $\G = \II_\r(\so{G})$ and $\so{F} = (\so{F}_1,\so{F}_2)$, 
$\so{G} = (\so{G}_1,\so{G}_2)$. The slice product of $\F$ and $\G$ will be called \emph{(right)} 
\emph{slice composition of $\F$ and $\G$} and will be denoted by $\F \odot \G : \Omega_D \funzione \Lin^\r(X)$. In other terms  
\begin{equation}
  \F \odot \G := \II_\r(\so{F}\so{G}), \notag
\end{equation}
i.e.
\begin{equation}\label{esplicita slice operatorial composition}
  (\F \odot \G)(\phi_\j(z)) =
  (\so{F}_1(z) \circ \so{G}_1(z) - \so{F}_2(z) \circ \so{G}_2(z)) +  
    (\so{F}_2(z) \circ \so{G}_1(z) + \so{F_}1(z) \circ \so{G}_2(z) )\j
\end{equation}
for every $z \in D$ and for every $\j \in \su_{\alg}$.
\end{Def}

\begin{Example} \label{exa:elementary-2}
Let $\A \in \Lin^\r(X)$ and let $\F : \Omega_D \funzione \Lin^\r(X)$ be a right slice function. We know from 
Example \ref{exa:elementary}(b) that the function $q \longmapsto \A \F(q) = \A \circ \F(q)$ is a right slice function. In general, instead, the mapping $q \longmapsto \F(q)\A$ is not right slice. If we consider $\A$ as a constant function, then the slice composition $\F \odot \A$ is a right slice function. Explicitly if $\F = \II_\r(\so{F})$ with 
$\so{F} = (\so{F}_1,\so{F}_2)$ then
\[
  (\F \odot \A)(\phi_\j(z)) = \so{F}_1(z) \A + \so{F}_2(z) \A \, \j \qquad \forall z \in D,\ \forall \j \in \su_\alg. \ \text{\bs}
\]
\end{Example}

In the following remark we show that, given a right slice function $\F : \Omega_D \funzione \Lin^\r(X)$ and $x \in X$, in general we cannot conclude that $\Omega_D \funzione X : q \longmapsto \F(q)x$ is right slice (nor left slice).

\begin{Rem} \label{eq:no-right-slice}
Let $x \in X$, let $\F : \Omega_D \funzione \Lin^\r(X)$ be a right slice function and let $(\so{F}_ 1, \so{F}_2)$ be the stem function inducing $\F$. Define $\F_x : \Omega_D \funzione X$ by $\F_x(q) := \F(q)x$. Therefore we have that 
$\F_x(\phi_\j(z))=(\so{F}_1(z) + \so{F}_2(z)\j)x = \so{F}_1(z)x + \so{F}_2(z)(\j x)$ if $z \in D$ and $\j \in \su_\alg$. Given 
$\j \in \su_\alg$, if $\F_x=\II_r(F)$ for some stem function $F:D \funzione X \otimes_\erre \ci$ with $F=(F_1,F_2)$, then \eqref{repr. formula 2} implies that $F_1(z)=\so{F}_1(z)x$ and $F_2(z)=-\so{F}_2(z)(\j x\j)$. It follows immediately that 
$\F_x$ is right slice if and only if, for every $z \in D$ and for every $\j,\k \in \su_\alg$, $\so{F}_2(z)(\k x)=-\so{F}_2(z)(\j x\j)\k$ or, equivalently, $\so{F}_2(z)(\k x\k-\j x\j)=0$. A concrete example in which the latter equality fails is as follows. Denote by 
$\{1,i,j,k\}$ the standard real vector basis of $\quat$ and define $\alg:=\quat$, $X$ as $\quat$ with its standard structure of Banach two-sided $\alg$-module (cf. Example \ref{exa:b-t-s-a}), $x:=j \in X=\quat$ and the function 
$\F: \quat \funzione \Lin^\r(\quat)$ by setting $\F(q)p:=iqp$ for every $p \in \quat=X$. Observe that, in this case, 
$\so{F}_2(z)p=i\Im(z)p$ and hence, if $z=i \in \ci$, $\j:=j \in \alg=\quat$ and $\k:=k \in \alg=\quat$, then 
$\so{F}_2(z)(\k x\k-\j x\j)=2k \neq 0$. It follows that $\F_x: \quat \funzione \quat$, $\F_x(q)=iqj$, is not right slice. It is immediate to easy that $\F_x$ is not left slice as well. \bs
\end{Rem}


\subsection{Integrals}

Here is a result on the slice regularity of integrals depending on a parameter.

\begin{Prop}\label{slice regularity of integral functions}
Let $X$ be a Banach two-sided $\alg$-module, let $I$ be an interval of $\erre$, let $D$ be a nonempty open subset of $\ci$ invariant under complex conjugation and let $f : I \times \Omega_D \funzione X$ be a map such that 
$f(\cdot,q) \in L^1(I;X)$ for every $q \in \Omega_D$ and $f(t,\cdot)$ is right slice regular with $f(t,\cdot) = \II_\r(F(t,\cdot))$ for every $t \in I$. Suppose there exist $\j \in \su_\alg$ and $g_r, g_s \in L^1(I;\erre)$ such that, if 
$f_\j:I \times D \funzione X$ denotes the map $f_\j(t,z):=f(t,\phi_\j(z))$, $\norma{(\partial f_\j/\partial r)(t,z)}{} \le g_r(t)$ and 
$\norma{(\partial f_\j/\partial s)(t,z)}{} \le g_{s}(t)$ for every $t \in I$ and for every $z=r+is \in D$. Then the function 
$h : \Omega_D \funzione X$ defined by
\begin{equation*}
  h(q) := \int_I f(t,q) \de t, \qquad 
  q \in \Omega_D
\end{equation*}
is right slice regular and $h = \II_\r(H)$, where $H(z) := \int_I F(t,z) \de t$ if $z \in D$.
\end{Prop}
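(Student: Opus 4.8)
The plan is to realize $h$ as the right slice function induced by $H$, and then to deduce right slice regularity of $h$ from Proposition~\ref{f regular iff f slice holomorphic} applied to the one-complex-variable function $h_\j := h\circ\phi_\j : D \funzione X_\j$ for a single fixed $\j\in\su_\alg$. The first, essentially bookkeeping, step is to check that $H(z) := \int_I F(t,z)\,\de t$ makes sense and is a stem function with $h = \II_\r(H)$. Fixing $\j$ and using \eqref{repr. formula 2}, the entries $F_1(t,z)$ and $F_2(t,z)$ are expressed through $f(t,\phi_\j(z))$ and $f(t,\phi_\j(z)^c)$ (the second one followed by right multiplication by $\j$), so the hypothesis $f(\cdot,q)\in L^1(I;X)$ for all $q\in\Omega_D$, together with the fact that right multiplication by $\j$ is a bounded $\RR$-linear operator on $X$ (indeed an isometry, by Lemma~\ref{norma moltiplicativa}), gives $F(\cdot,z)\in L^1(I;X\otimes_\erre\ci)$, so $H(z)$ is well defined. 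Since the complex conjugation $v\longmapsto\vbar$ of $X\otimes_\erre\ci$ is bounded $\RR$-linear and each $F(t,\cdot)$ is a stem function, it commutes with the Bochner integral and yields $H(\zbar)=\overline{H(z)}$; and again because right multiplication by $\j$ commutes with the integral one gets $h(\phi_\j(z))=\int_I(F_1(t,z)+F_2(t,z)\j)\,\de t=H_1(z)+H_2(z)\j$ for every $z\in D$ and $\j\in\su_\alg$, i.e.\ $h=\II_\r(H)$ is a right slice function.

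The substance of the proof is the holomorphy of $h_\j$. Viewing $X_\j$ as the complex Banach space whose underlying real Banach space is $X$, one has $h_\j(z)=\int_I f_\j(t,z)\,\de t$ with $f_\j(\cdot,z)=f(\cdot,\phi_\j(z))\in L^1(I;X_\j)$; by Proposition~\ref{f regular iff f slice holomorphic} each $f_\j(t,\cdot)$ is holomorphic on $D$, hence of class $C^\infty$, with $\partial f_\j/\partial r+i\,\partial f_\j/\partial s=0$, while the partial derivatives are dominated, in norm, by the integrable functions $g_r$ and $g_s$. I would then invoke the classical theorem on differentiation of a Bochner integral with respect to a parameter to conclude that $h_\j$ has partial derivatives on $D$ given by $\int_I\partial f_\j/\partial r\,\de t$ and $\int_I\partial f_\j/\partial s\,\de t$, which are moreover continuous by dominated convergence, so that $h_\j\in C^1(D;X_\j)$; since multiplication by $i$ in $X_\j$ is right multiplication by $\j$, again a bounded $\RR$-linear operator, it passes inside the integral and gives $\partial h_\j/\partial r+i\,\partial h_\j/\partial s=\int_I(\partial f_\j/\partial r+i\,\partial f_\j/\partial s)\,\de t=0$. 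Hence $h_\j$ is holomorphic, and since $h$ is a right slice function by the first step, the equivalence (i)$\Leftrightarrow$(iii) of Proposition~\ref{f regular iff f slice holomorphic} shows that $h$ is right slice regular; by the definition of right slice regularity the inducing stem function $H$ is then holomorphic, which is the remaining assertion.

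The only step that is not pure routine is the interchange of integration and differentiation: it must be used in its Bochner-valued form, and this is precisely where the $L^1$ dominating functions $g_r$ and $g_s$ enter; moreover one has to verify continuity of the differentiated integrals in order to land in $C^1(D;X_\j)$ rather than in a merely differentiable map. Everything else rests on the single, repeatedly used observation that right multiplication by an element of $\su_\alg$, complex conjugation on $X\otimes_\erre\ci$, and multiplication by $i$ on $X_\j$ are bounded real-linear operators, and therefore commute with Bochner integration.
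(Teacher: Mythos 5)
Your proof is correct and follows essentially the same route as the paper's: both first establish $h=\II_\r(H)$ via the representation formulas \eqref{repr. formula 2} and then differentiate under the Bochner integral using the $L^1$ dominations to obtain the Cauchy--Riemann cancellation. The only difference is organizational: the paper transfers the bounds on $\partial f_\j/\partial r$, $\partial f_\j/\partial s$ to the stem components $F_1,F_2$ and proves holomorphy of $H$ directly, whereas you differentiate $h_\j$ itself and then pass back through the equivalence (i)$\Leftrightarrow$(iii) of Proposition~\ref{f regular iff f slice holomorphic}; both are valid.
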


\begin{proof}
If $F(t,\cdot) = (F_1(t,\cdot),F_2(t,\cdot))$ for every $t \in I$, then representation formulas \eqref{repr. formula 2} imply that $F_m(\cdot, z)$, $m = 1, 2$, is integrable for every $z \in D$, therefore the definition of $H$ makes sense. If $H=(H_1,H_2)$, then $H_m(z) = \int_I F_m(t,z) \de t$ for every $z \in D$, $m = 1, 2$, $H$ is a stem function, and 
\[
  H_1(z) + H_2(z)\j = \int_I (F_1(t,z) + F_2(t,z) \j) \de t = \int_I f(t,\phi_\j(z)) \de t = h(\phi_\j(z))
\]
for every $\j \in \su_\alg$ and every $z \in D$, i.e. $h = \II_\r(H)$. Using again formulas \eqref{repr. formula 2} we also infer that there exist $G_{r,m},G_{s,m} \in L^1(I;\erre)$, $m=1,2$, such that $\norma{(\partial F_m/\partial r)(t,z)}{} \leq G_{r,m}(t)$ and $\norma{(\partial F_m/\partial s)(t,z)}{} \leq G_{s,m}(t)$ for every $(t,z) \in I \times D$. In this way we can perform derivatives under the sign of integral, obtaining 
\[
  \frac{\partial H}{\partial r}(z) + i\frac{\partial H}{\partial s}(z) = 
  \int_I \left( \frac{\partial F}{\partial r}(t,z) + i\frac{\partial F}{\partial s}(t,z) \right) \de t = 0,
\]
therefore $H$ is holomorphic and we are done.
\end{proof}


\subsection{Spherical resolvent operator} \label{subsec:5.4}

The notions of spherical spectrum and of spherical resolvent operator was given for the first time in \cite{CoGeSaSt07} for quaternions and in \cite{ColSabStr08} for arbitrary Clifford algebras $\erre_n$.
Here we consider the general case introduced in \cite[Definition 2.26]{GhiRec15}.

Assume that
\[
\text{\emph{$\alg$ satisfies \eqref{assumption on A}, \eqref{S_A nonvuota} and \eqref{eq:assumption-2bis}, and $X$ is a Banach two-sided $\alg$-module.}}
\]

\begin{Def}\label{D:spherical spectral notions}
Let $D(\A)$ be a right $\alg$-submodule of $X$ and let $\A : D(\A) \funzione X$ be a closed right linear operator. Given 
$q \in Q_\alg$, we define the right linear operator $\Delta_q(\A) : D(\A^2) \funzione X$ by setting
\begin{equation*}
  \Delta_q(\A) := \A^2 - 2\re (q) \,\A + |q|^2 \Id.
\end{equation*}
The \emph{spherical resolvent set $\rho_\s(\A)$ of $\A$} and the \emph{spherical spectrum $\sigma_\s(\A)$ of $\A$} are the circular subsets of $Q_\alg$ defined as follows: 
\begin{equation*}
\rho_\s(\A) := \{q \in Q_\alg \, : \, \text{$\Delta_q(\A)$ is bijective, $\Delta_q(\A)^{-1} \in \Lin^\r(X)$}\}
\end{equation*}
and
\begin{equation*}
  \sigma_\s(\A) :=Q_\alg \setmeno \rho_\s(\A).
\end{equation*}
For every $q \in \rho_\s(\A)$, we define the operators $\Q_q(\A) \in \Lin^\r(X)$ and $\C_q(\A) \in \Lin^\r(X)$ by setting
\begin{equation*}
  \Q_q(\A) := \Delta_q(\A)^{-1} 
\end{equation*}
and
\begin{equation}\label{resolvent operator}
\C_q(\A) := \Q_q(\A) q^c - \A \Q_q(\A).
\end{equation}
The operator $\C_q(\A)$ is called \emph{spherical resolvent operator of $\A$ at $q$}.
\end{Def}

Observe that the boundedness of $\C_q(\A)$ follows from the closed graph theorem on $\Lin(_\erre X)$
(cf. \cite[Proposition 2.28]{GhiRec15}). We also mention that a definition that has some similarities with the spherical spectrum was given in \cite{Kap} in the context of real *-algebras.

\begin{Rem}
Let $\alg=\quat$ and let $X = \quat^2$ with standard left and right multiplications by scalars in $\quat$ (cf. Example \ref{exa:b-t-s-a}). Define $\A \in \Lin^\r(\quat^2)$ by setting 
\[
\A
\begin{pmatrix} p \\ q \end{pmatrix}
:=
\begin{pmatrix} 0 &i \\ j & 0 \end{pmatrix}
\begin{pmatrix} p \\ q \end{pmatrix}
=
\begin{pmatrix} iq \\ jp \end{pmatrix}
\qquad \forall
\begin{pmatrix} p \\ q \end{pmatrix} \in \quat^2.
\]
By direct inspection, one easily verifies that $\lambda=\lambda_0+\lambda_1i+\lambda_2j+\lambda_3k \in \quat$ with 
$\lambda_0,\lambda_1,\lambda_2,\lambda_3 \in \erre$ belongs to $\sigma_S(\A)$ if and only if $\lambda_0^2=\frac{1}{2}$ and $\lambda_1^2+\lambda_2^2+\lambda_3^2=\frac{1}{2}$. In other words, we have
\[
\sigma_\s(\A)=\left(-\frac{1}{\sqrt{2}}+\frac{1}{\sqrt{2}} \, \su_\quat\right) \cup \left(\frac{1}{\sqrt{2}}+\frac{1}{\sqrt{2}} \, 
\su_\quat\right).
\]
It is also immediate to see that the operator $\lambda \Id-\A \in \Lin^\r(\quat^2)$ is not invertible if and only if 
$\lambda \in \{\mu,\mu^c\}$, where $\mu:= \frac{1}{\sqrt{2}}(i+j)$. Observe that 
$\{\mu,\mu^c\} \cap \sigma_\s(\A)=\vuoto$. This shows that in general there is no relation between the notion of spherical spectrum and the noncommutative version~of the classical concept of spectrum. Indeed, $\Delta_\mu(\A)$ is invertible, but $\mu\Id-\A$ and $\mu^c\Id - \A$ are~not. Moreover, if $\lambda=\frac{1}{\sqrt{2}}(1+i)$, then $\lambda\Id-\A$ and $\lambda^c\Id - \A$ are invertible, but $\Delta_\lambda(\A)$ is not.

It is worth recalling that, in the quaternionic matricial case, the spherical spectrum coincides with the set of right eigenvalues (cf. \cite[Proposition 4.5]{GhiMorPer13}): $\lambda \in \sigma_\s(\A)$ if and only if $\A x=x\lambda$ for some 
$x \in \quat^2 \setmeno \{(0,0)\}$. The spherical spectrum is equal also to the set of left eigenvalues, provided $\quat^2$ is endowed with a suitable left scalar multiplication (cf. \cite[Example 7.3]{GhiMorPer-bis}). \bs
\end{Rem}

In our next result, given $\j \in \su_\alg$, we describe the deep connection existing between the notions of spherical resolvent set and of spherical resolvent operator of an operator $\A$ on $X$ and the classical complex ones of resolvent set and of resolvent operator of the operator $\A_\j$ on $X_\j$.


\begin{Thm}\label{legame risolventi}
Let $\A : D(\A) \funzione X$ be a closed right linear operator, $\j \in \su_\alg$, and let $\rho(\A_\j)$ denote the resolvent set of the operator $\A_\j : D(\A_\j) \funzione X_\j$ (cf. Definition \ref{D: complex space X_j}). Then the following equivalent assertions hold.
\begin{itemize}
 \item[(i)]
Given $\lambda \in \ci$, we have that $\phi_\j(\lambda) \in \rho_\s(\A)$ if and only if both $\lambda$ and 
$\overline{\lambda}$ belong to $\rho(\A_\j)$.
 \item[(ii)]
$\rho_\s(\A)$ is equal to the circular subset of $Q_\alg$ associated to 
$\rho(\A_\j) \cap \overline{\rho(\A_\j)}$, where $\overline{\rho(\A_\j)}$ denotes the set 
$\{\lambda \in \ci : \overline{\lambda} \in \rho(\A_\j)\}$.
\end{itemize}
Furthermore, if $\lambda \in \rho(\A_\j) \cap \overline{\rho(\A_\j)}$, 
\begin{equation}\label{Q = R R}
  \Q_{\phi_\j(\lambda)}(\A)  = \R_{\overline{\lambda}}(\A_\j) \R_{\lambda}(\A_\j).
\end{equation}
\end{Thm}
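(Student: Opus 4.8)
The plan is to reduce everything to a factorization identity for the operator $\Delta_{\phi_\j(\lambda)}(\A)$ over the complex space $X_\j$. First I would observe that, by definition, $\Delta_q(\A) = \A^2 - 2\re(q)\A + |q|^2\Id$, and that for $q = \phi_\j(\lambda)$ with $\lambda = a + bi$, $a,b\in\erre$, we have $\re(q) = a = \Re(\lambda)$ and $|q|^2 = a^2 + b^2 = |\lambda|^2 = \lambda\overline{\lambda}$ (using \eqref{eq:assumption-2bis}). Hence, viewing $\A_\j$ as a $\CC$-linear operator on $X_\j$, the key algebraic identity is
\begin{equation}\label{plan:factorization}
  \Delta_{\phi_\j(\lambda)}(\A) = \A_\j^2 - (\lambda + \overline{\lambda})\A_\j + \lambda\overline{\lambda}\,\Id
    = (\A_\j - \lambda\,\Id)(\A_\j - \overline{\lambda}\,\Id) = (\A_\j - \overline{\lambda}\,\Id)(\A_\j - \lambda\,\Id),
\end{equation}
where the two factorizations agree because $\lambda\,\Id$ and $\overline{\lambda}\,\Id$ are scalar operators and hence commute with each other and with $\A_\j$. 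One has to be slightly careful that the scalar $\lambda\,\Id$ on $X_\j$ really is the map $x \longmapsto x\phi_\j(\lambda)$, and that this coincides with the operator used to form $\re(q)\A$ and $|q|^2\Id$; this is exactly the content of Definition \ref{D: complex space X_j} together with the fact that $2\re(q)$ and $|q|^2$ are \emph{real} scalars, so there is no ambiguity about left/right multiplication.

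Granting \eqref{plan:factorization}, the proof of (i) is immediate: if both $\lambda$ and $\overline{\lambda}$ lie in $\rho(\A_\j)$, then $\A_\j - \lambda\,\Id$ and $\A_\j - \overline{\lambda}\,\Id$ are both boundedly invertible on $X_\j$, hence so is their composition $\Delta_{\phi_\j(\lambda)}(\A)$, with bounded inverse $\R_{\overline{\lambda}}(\A_\j)\R_{\lambda}(\A_\j)$; moreover this inverse is $\erre$-linear and, being a composition of maps that commute with right multiplication by every element of $\alg$ (the resolvents $\R_\mu(\A_\j)$ are limits of polynomials in $\A_\j$, or one checks directly that the inverse of a right linear bijection is right linear, cf. Lemma \ref{L:L^r chiuso in L}), it actually lies in $\Lin^\r(X)$, so $\phi_\j(\lambda) \in \rho_\s(\A)$ and \eqref{Q = R R} holds. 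Conversely, if $\phi_\j(\lambda) \in \rho_\s(\A)$, then $\Delta_{\phi_\j(\lambda)}(\A)$ is bijective with bounded right linear inverse $\Q_{\phi_\j(\lambda)}(\A)$; from the first factorization in \eqref{plan:factorization}, $\A_\j - \lambda\,\Id$ is surjective (it is a left factor of a surjection) and $\A_\j - \overline{\lambda}\,\Id$ is injective (right factor of an injection), and from the second factorization the roles swap, so both $\A_\j - \lambda\,\Id$ and $\A_\j - \overline{\lambda}\,\Id$ are bijective; their inverses are bounded by the closed graph theorem (note $\A$, hence $\A_\j$, is closed, so $\A_\j - \mu\,\Id$ is closed and a closed bijection has bounded inverse). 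Thus $\lambda, \overline{\lambda} \in \rho(\A_\j)$. Assertion (ii) is merely a reformulation of (i) using that $\rho_\s(\A)$ is circular (Definition \ref{D:spherical spectral notions}) and that $\phi_\j$ ranges over $\ci_\j$, so it follows formally from (i) once we recall that membership of $q = r + s\k$ in a circular set depends only on the complex number $r + si$.

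The main obstacle, such as it is, is bookkeeping rather than depth: one must verify that the inverse $\Q_{\phi_\j(\lambda)}(\A)$ produced on $X_\j$ is genuinely an element of $\Lin^\r(X)$ and not merely of $\Lin(X_\j)$ — that is, that it commutes with right multiplication by \emph{all} of $\alg$, not just by $\ci_\j$. The clean way around this is to note that right-linearity of the inverse is automatic: if $T \in \Lin^\r(X)$ is bijective then for $x \in X$ and $q \in \alg$ one has $T(T^{-1}(x)q) = (TT^{-1}(x))q = xq$, whence $T^{-1}(xq) = T^{-1}(x)q$, so $T^{-1} \in \End^\r(X)$, and boundedness then gives $T^{-1} \in \Lin^\r(X)$ by Lemma \ref{L:L^r chiuso in L} or directly by the closed graph theorem on $_\erre X$. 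Applying this to $T = \Delta_{\phi_\j(\lambda)}(\A)$ (for the forward direction, where we already know $\phi_\j(\lambda)\in\rho_\s(\A)$) and to $T = \A_\j - \lambda\,\Id$ etc. is harmless since we only use it after establishing bijectivity. Equation \eqref{Q = R R} then drops out by composing the two scalar-shift resolvents, the order being irrelevant by \eqref{plan:factorization}.
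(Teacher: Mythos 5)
Your proof is correct and rests on exactly the same key identity as the paper's, namely the factorization $\Delta_{\phi_\j(\lambda)}(\A)=(\lambda\Id_{X_\j}-\A_\j)(\overline{\lambda}\Id_{X_\j}-\A_\j)$ on $D(\A^2)$ (the paper's \eqref{Delta = product}); the forward implication, the right-linearity of the inverse, and \eqref{Q = R R} are handled as in the paper. The one place where you genuinely diverge is the converse: the paper exhibits $(\overline{\lambda}\Id_{X_\j}-\A_\j)(\Q_q(\A))_\j$ as an explicit right inverse of $\lambda\Id_{X_\j}-\A_\j$ and then verifies it is also a left inverse through the commutation $\A\Q_q(\A)y=\Q_q(\A)\A y$ on $D(\A)$ (obtained from $\Delta_q(\A)\A=\A\Delta_q(\A)$), which has the side benefit of identifying $\R_\lambda(\A_\j)$ explicitly; you instead read off surjectivity and injectivity of each factor from the two orderings of the factorization and invoke the closed graph theorem, which is shorter but gives no formula for $\R_\lambda(\A_\j)$. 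Your route is sound, but one step deserves to be spelled out: ``right factor of an injection is injective'' a priori only yields injectivity of $\A_\j-\overline{\lambda}\Id_{X_\j}$ on $D(\A^2)$, not on its full domain $D(\A)$; this is repaired by noting that any $x\in D(\A)$ with $\A x=x\phi_\j(\overline{\lambda})$ automatically satisfies $\A x\in D(\A)$ (as $D(\A)$ is a right submodule), hence lies in $D(\A^2)$. A similar domain check (that the natural domain of each composed factorization is exactly $D(\A^2)$, which the paper verifies via $(\overline{\lambda}\Id-\A_\j)(D(\A^2))=D(\A)$) is implicit in your forward direction and should be made explicit.
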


\begin{proof}
The equivalence between (i) and (ii) is evident. Let us prove (i). If $\lambda = r + si \in \ci$ with $r, s \in \erre$ and 
$q:= \phi_\j(\lambda) \in Q_\alg$, then for every $x \in D(\A^2)$ we have 
\begin{align}
  (\lambda \Id_{X_\j} - \A_\j)(\overline{\lambda}\Id_{X_\j} - \A_\j)x
    & = ((r+si)\Id_{X_\j} - \A_\j)((r - si)\Id_{X_\j} - \A_\j)x \notag \\
    & = (\A_\j^2 - 2r\A_\j + r^2\Id_{X_\j} + s^2\Id_{X_\j})x \notag \\
    & = \A^2x - \A x(2r)+ x(r^2 + s^2) = \Delta_q(\A)x. \label{Delta = product}
\end{align}
Suppose that $\{\lambda,\overline{\lambda}\} \subseteq \rho(\A_\j)$. If $y \in D(\A)$, then there is $x \in D(\A)$ such that 
$\A_\j x = \overline{\lambda} x - y$, therefore 
$\A x \in D(\A)$ since $D(\A)$ is a complex vector space. This proves that $(\overline{\lambda}\Id - \A_\j)(D(\A^2)) = D(\A)$, which together with \eqref{Delta = product} implies that $\Delta_q(\A)$ is onto $X$. Moreover from \eqref{Delta = product} we also infer that $\Delta_q(\A)$ is injective and \eqref{Q = R R} holds. This also implies that $\Q_q(\A)$ is continuous. Moreover $\Q_q(\A)$ is right linear by virtue of \cite[Lemma 2.16]{GhiRec15}, thus $q \in \rho_\s(\A)$. Suppose now that 
$q \in \rho_\s(\A)$. From \eqref{Delta = product} it follows that 
$(\lambda \Id_{X_\j} - \A_\j)(\overline{\lambda}\Id_{X_\j} - \A_\j) \Q_q(\A)x = x$ for every $x \in X$, thus 
$\lambda \Id_{X_\j} - \A_\j$ has a right inverse, which is provided by the operator
\[
  (\overline{\lambda}\Id_{X_\j} - \A_\j) (\Q_q(\A))_\j
\]
Let us prove that it is also a left inverse. Consider a point $y$ in $D(\A_\j)=D(\A)$. Since $\Delta_q(\A)(D(\A^3))=D(\A)$
 and $\Delta_q(\A)\A=\A\Delta_q(\A)$, we infer that 
$\A\Q_q(\A)y=\Q_q(\A)\A y$, indeed: 
$\A\Q_q(\A)y=\Q_q(\A)\Delta_q(\A)\A\Q_q(\A)y=\Q_q(\A)\A\Delta_q(\A)\Q_q(\A) y=\Q_q(\A)\A y$. It follows that 
\begin{align*}
  (\overline{\lambda}\Id_{X_\j} - \A_\j) (\Q_q(\A))_\j(\lambda\Id_{X_\j}-\A_\j)y 
    & = (\overline{\lambda}\Id_{X_\j} - \A_\j)\big((\Q_q(\A))_\j(\lambda y) - \Q_q(\A) \A y\big) \notag \\
    & = (\overline{\lambda}\Id_{X_\j} - \A_\j)(\lambda(\Q_q(\A)(y)) - \A \Q_q(\A) y) \notag \\
    & = |\lambda|^2\Q_q(\A)y - \overline{\lambda} (\A \Q_q(\A)y)-\lambda \A_\j (\Q_q(\A)y) + \A \A \Q_q(\A) y \notag \\
    & = (|\lambda|^2  - 2 \Re(\lambda) \A - \A^2)\Q_q(\A) y \notag \\
    & = (|q|^2  - 2 \re(q) \A - \A^2)\Q_q(\A) y \notag \\
&= \Delta_q(\A)\Q_q(\A)y=y. \notag
\end{align*} 
This proves that $\lambda \in \rho(\A_\j)$ and $(\overline{\lambda}\Id_{X_\j} - \A_\j) (\Q_q(\A))_\j=\R_{\lambda}(\A_\j)$. Interchanging $\lambda$ and $\overline{\lambda}$ we obtain also that $\overline{\lambda} \in \rho(\A_\j)$ and we are done.
\end{proof}

\begin{Rem}
Let $\A : D(\A) \funzione X$ be a closed right linear operator and let $\j \in \su_\alg$. Choose 
$\lambda \in \rho(\A_\j) \cap \overline{\rho(\A_\j)}$ and define $q:=\phi_\j(\lambda) \in \rho_\s(\A)$. Since 
$(\overline{\lambda}\Id_{X_\j}-\A_\j)\R_{\overline{\lambda}}(\A_\j)=\Id_{X_\j}$, we have that 
$\A_\j\R_{\overline{\lambda}}(\A_\j)-\Id_{X_\j}=\overline{\lambda}\R_{\overline{\lambda}}(\A_\j)$. In this way, thanks to \eqref{Q = R R}, given any $x \in X$, it holds:
\begin{align*}
\C_q(\A)x-\R_\lambda(\A_\j)x & =\R_{\overline{\lambda}}(\A_\j) \R_{\lambda}(\A_\j)(q^cx)-(\A_\j\R_{\overline{\lambda}}(\A_\j)-\Id_{X_\j})\R_\lambda(\A_\j)x \\
& =\R_{\overline{\lambda}}(\A_\j) \R_\lambda(\A_\j)(q^cx)-\overline{\lambda}\R_{\overline{\lambda}}(\A_\j)\R_\lambda(\A_\j)x \\
& =\R_{\overline{\lambda}}(\A_\j) \R_\lambda(\A_\j)(q^cx-xq^c).
\end{align*}
It follows that
\begin{align*}
 & \forall x \in X \;\; : \;\; \C_q(\A)x=\R_\lambda(\A_\j)x \quad \Longleftrightarrow \quad q^cx=xq^c,\\
 & \C_q(\A)=\R_\lambda(\A_\j) \quad \Longleftrightarrow \quad q^cx=xq^c \quad \forall x \in X.
\end{align*}
In particular, we have that $\C_r(\A)=\R_r(\A_\j)$ for every $r \in \erre$. \bs
\end{Rem}

\begin{Prop}\label{q -> C_q slice regular}
Let $\A : D(\A) \funzione \X$ be a closed right linear operator such that 
$\rho_\s(\A) \cap \erre \neq \vuoto$. Then the mapping $\rho_\s(\A) \funzione \Lin^\r(X) : q \longmapsto \C_q(\A)$ is right slice regular. 
\end{Prop}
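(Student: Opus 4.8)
The plan is to reduce the claim to the already-established slice-regularity criterion of Proposition \ref{f regular iff f slice holomorphic}, namely that a right slice function is right slice regular as soon as its restriction $f_\j := f \circ \phi_\j$ is holomorphic on $D$ for a \emph{single} $\j \in \su_\alg$. Thus the proof has two parts: first, to show that $q \longmapsto \C_q(\A)$ is a right slice function on $\rho_\s(\A) = \Omega_D$ (where $D$ is the complex set such that $\rho_\s(\A)$ is the circular set associated to it, via Theorem \ref{legame risolventi}(ii), with $D = \rho(\A_\j) \cap \overline{\rho(\A_\j)}$); second, to produce a stem function $\so{R} = (\so{R}_1, \so{R}_2) : D \funzione \Lin^\r(X) \otimes_\erre \ci$ inducing it and to verify that $\so{R}$ is holomorphic.

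First I would write down the candidate stem function explicitly. From the definition \eqref{resolvent operator} and the identity \eqref{Q = R R}, for $q = \phi_\j(\lambda) \in \rho_\s(\A)$ with $\lambda = r + si$ we have $\C_q(\A) = \Q_q(\A)q^c - \A\Q_q(\A) = \R_{\overline{\lambda}}(\A_\j)\R_\lambda(\A_\j)(r - s\j) - \A\,\R_{\overline{\lambda}}(\A_\j)\R_\lambda(\A_\j)$. Splitting off the $\j$-component, this has the form $\C_q(\A) = \so{R}_1(\lambda) + \so{R}_2(\lambda)\,\j$ with $\so{R}_1(\lambda) := r\,\Q_{\phi_\j(\lambda)}(\A) - \A\,\Q_{\phi_\j(\lambda)}(\A)$ and $\so{R}_2(\lambda) := -s\,\Q_{\phi_\j(\lambda)}(\A)$, the point being that $\Q_{\phi_\j(\lambda)}(\A) \in \Lin^\r(X)$ commutes with right multiplication by $\j$ since it is right linear, so that $\Q_q(\A)q^c = \Q_q(\A)(r-s\j)$ genuinely decomposes this way. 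One then checks that this formula is independent of the chosen $\j$ — this follows because $\C_q(\A)$ itself is intrinsically defined in terms of $\Delta_q(\A)$, $\re(q)$, and $q^c$, none of which involves $\j$, so the decomposition is forced by \eqref{repr. formula 2} — and that $\so{R}(\overline{\lambda}) = \overline{\so{R}(\lambda)}$, i.e. $\so{R}_1$ is even and $\so{R}_2$ is odd under conjugation, which is clear from the symmetry $\Delta_q(\A) = \Delta_{q^c}(\A)$ hence $\Q_q(\A) = \Q_{q^c}(\A)$. This establishes that $q \longmapsto \C_q(\A)$ is a right slice function.

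Next, to get right slice regularity it suffices by Proposition \ref{f regular iff f slice holomorphic}(iii) to show $f_\j = \C_{\phi_\j(\cdot)}(\A)$ is holomorphic on $D$ as a map into $\Lin^\r(X)_\j$, equivalently (by the Remark following \eqref{product in A_C}) that $\lambda \longmapsto \Q_{\phi_\j(\lambda)}(\A)$ and $\lambda \longmapsto \A\,\Q_{\phi_\j(\lambda)}(\A)$ are holomorphic in the appropriate sense. Using \eqref{Q = R R}, $\Q_{\phi_\j(\lambda)}(\A) = \R_{\overline{\lambda}}(\A_\j)\R_\lambda(\A_\j)$, and the classical complex resolvent $\lambda \longmapsto \R_\lambda(\A_\j)$ is holomorphic on $\rho(\A_\j)$ in the operator norm of $\Lin(_\erre X)$ (standard semigroup/spectral theory), while $\lambda \longmapsto \R_{\overline{\lambda}}(\A_\j)$ is holomorphic on $\overline{\rho(\A_\j)}$; their product is therefore holomorphic on $D = \rho(\A_\j)\cap\overline{\rho(\A_\j)}$, and since $\Lin^\r(X)$ is closed in $\Lin(_\erre X)$ (Lemma \ref{L:L^r chiuso in L}) the derivative stays inside $\Lin^\r(X)$, giving holomorphy into $\Lin^\r(X)_\j$. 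For the term $\A\,\Q_{\phi_\j(\lambda)}(\A)$ one uses the resolvent identity: $\A\,\R_\mu(\A_\j) = \mu\,\R_\mu(\A_\j) - \Id$ on the appropriate domain, which lets one rewrite $\A\,\Q_q(\A)$ purely in terms of resolvents and scalars, making its holomorphy in $\lambda$ manifest; alternatively one differentiates $\Delta_q(\A)\Q_q(\A) = \Id$ to express the $\lambda$-derivative of $\Q_q(\A)$ and then notes $\A\,\Q_q(\A)$ inherits boundedness and analyticity. Either way, $f_\j$ is holomorphic, so $\C_\cdot(\A)$ is right slice regular.

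The main obstacle I anticipate is bookkeeping around domains and the interplay of right linearity with the complex structure $X_\j$: one must be careful that $\R_\lambda(\A_\j)$, a priori only a $\ci$-linear operator on $X_\j$, actually equals an element of $\Lin^\r(X)$ when it arises inside the product \eqref{Q = R R} — this is exactly what Theorem \ref{legame risolventi} guarantees, and invoking it correctly is the crux. The rest is the routine verification that norm-holomorphy of complex-operator-valued resolvents transfers through the product and the splitting into $\so{R}_1, \so{R}_2$, using that these are real-bilinear combinations of holomorphic maps (cf. the Proposition preceding Definition \ref{D: slice composition}).
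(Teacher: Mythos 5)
The first half of your argument is sound and coincides with the paper's: the stem function you write down, $\so{R}_1(\lambda)=\Re(\lambda)\,\Q_{\phi_\j(\lambda)}(\A)-\A\Q_{\phi_\j(\lambda)}(\A)$ and $\so{R}_2(\lambda)=-\Im(\lambda)\,\Q_{\phi_\j(\lambda)}(\A)$, is exactly the one the paper uses to show that $q\longmapsto\C_q(\A)$ is a right slice function. The gap is in the holomorphy step. The map $\lambda\longmapsto\R_{\overline{\lambda}}(\A_\j)$ is \emph{anti}-holomorphic, not holomorphic, so the product $\Q_{\phi_\j(\lambda)}(\A)=\R_{\overline{\lambda}}(\A_\j)\R_{\lambda}(\A_\j)$ is neither: already for $\A=0$ one has $\Q_{\phi_\j(\lambda)}(0)=|\lambda|^{-2}\Id$, a nonconstant real-valued function of $\lambda$. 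For the same reason $\lambda\longmapsto\A\Q_{\phi_\j(\lambda)}(\A)$ is not holomorphic, so your claimed reduction ("equivalently, that $\Q_{\phi_\j(\cdot)}(\A)$ and $\A\Q_{\phi_\j(\cdot)}(\A)$ are holomorphic") is false in both directions. What is actually true is that holomorphy emerges only from the full combination $\C_q(\A)=\Q_q(\A)q^c-\A\Q_q(\A)$, where the anti-holomorphic factor $q^c$ compensates the anti-holomorphy of $\Q_q(\A)$ (again for $\A=0$: $\C_q(0)=|q|^{-2}q^c=q^{-1}$, which is slice regular even though each factor is not).

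To close the gap you must verify the Cauchy--Riemann system \eqref{eq:c-r-type-system} for $(\so{R}_1,\so{R}_2)$ as a whole. One way is direct: differentiating $\Delta_{\phi_\j(\lambda)}(\A)\Q_{\phi_\j(\lambda)}(\A)=\Id$ gives $\partial_r\Q=2(\A-r)\Q^2$ and $\partial_s\Q=-2s\Q^2$ (with $\lambda=r+si$), and substituting these into \eqref{eq:c-r-type-system} one checks that both equations reduce to $\Delta_{\phi_\j(\lambda)}(\A)\Q_{\phi_\j(\lambda)}(\A)=\Id$; this computation is the real content of the proposition and is absent from your proposal. The paper instead avoids it by picking $\lambda_0\in\rho_\s(\A)\cap\erre$, setting $\B:=-\C_{\lambda_0}(\A)\in\Lin^\r(X)$ and using the inversion $\Phi(q)=(q-\lambda_0)^{-1}$ to write $\C_q(\A)=-\B\,\C_{\Phi(q)}(\B)\,\Phi(q)$, thereby reducing to the bounded case treated in \cite{GhiRec15}; note that this is precisely where the hypothesis $\rho_\s(\A)\cap\erre\neq\vuoto$ enters, and the fact that your argument never uses that hypothesis should have been a warning sign.
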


\begin{proof}
Following the proof of Lemma 2.36 of \cite{GhiRec15}, one obtains that, if $\lambda \in \rho_\s(\A) \cap \erre$, 
$\B:=-\C_\lambda(\A) \in \Lin^\r(X)$ and $\Phi:Q_\alg \setmeno \{\lambda\} \funzione Q_\alg \setmeno \{\lambda\}$ is the inversion map $\Phi(q):=(q-\lambda)^{-1}$, then $\Phi(\rho_\s(\A) \setmeno \{\lambda\})=\rho_\s(\B) \setmeno \{\lambda\}$ and $\C_q(\A)=-\B\C_{\Phi(q)}(\B)\Phi(q)$ for every $q \in \rho_\s(\A) \setminus \{\lambda\}$. Since \cite[Lemma 2.31]{GhiRec15} ensures that $\rho_\s(\B)$ is nonempty and open in $Q_\alg$, we infer that  $\rho_\s(\A)$ is a nonempty open circular subset of $Q_\alg$. Let $D$ be the nonempty open subset of $\ci$ invariant under complex conjugation such that 
$\Omega_D=\rho_\s(\A)$. Fix $\j \in \su_\alg$ and define the stem function
$\so{F} = (\so{F_1},\so{F_2}) : D \funzione \Lin^\r(X) \otimes_\erre \ci$ as follows:
\begin{align}
  & \so{F_1}(z) := \Q_{\phi_\j(z)}(\A) \Re(z) - \A \Q_{\phi_\j(z)}(\A), \label{stem risolv sf 1} \\
  & \so{F_2}(z) := -\Q_{\phi_\j(z)}(\A) \Im(z).
  \label{stem risolv sf 2}
\end{align}
For every $z \in D$, we have that $\so{F}(\zbar) = \overline{\so{F}(z)}$ and
\begin{align}
  \so{F_1}(z) + \so{F_2}(z) \j
  &= \Q_{\phi_\j(z)}(\A)\Re(z)-\A\Q_{\phi_\j(z)}(\A) - \Q_{\phi_\j(z)}(\A)\Im(z)\j \notag \\
   & = \Q_{\phi_\j(z)}(\A){\phi_\j(z)}^c - \A\Q_{\phi_\j(z)}(\A) = \C_{\phi_\j(z)}(\A), \label{useful}
\end{align}
therefore $q \longmapsto \C_q(\A)$ is right slice. From \cite[Lemma 2.32]{GhiRec15} it follows that the map 
$D \funzione \left(\Lin^\r(X)\right)_\j: z \longmapsto \C_{\phi_\j(z)}(\B)$ is holomorphic. Since 
$\C_q(\A)=-\B\C_{\Phi(q)}(\B)\Phi(q)$, the map $z \longmapsto \C_{\phi_\j(z)}(\A)$ is holomorphic as well. Now we can conclude by invoking 
Proposition \ref{f regular iff f slice holomorphic}.
\end{proof}

If $\omega \in \erre$ a straightforward computation shows that $\Delta_{q}(\A - \omega \Id) = \Delta_{q+\omega}(\A)$ for every $q \in Q_\alg$, therefore we can relate the spherical resolvent operators of $\A$ and of $\A - \omega\Id$ as in the classical case (this is not true if $\omega$ is not real).

\begin{Lem}\label{A <-> A - w}
If $\omega \in \erre$ then $\Delta_{q}(\A - \omega \Id) = \Delta_{q+\omega}(\A)$ for every $q \in Q_\alg$ and 
$\rho_\s(\A - \omega\Id) = \rho_\s(\A) - \omega$. Moreover $\Q_{q}(\A - \omega \Id) = \Q_{q+\omega}(\A)$ and
$\C_{q}(\A - \omega \Id) = \C_{q+\omega}(\A)$ for every $q \in \rho_\s(\A - \omega \Id)$.
\end{Lem}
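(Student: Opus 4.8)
The plan is to establish the operator identity $\Delta_q(\A - \omega\Id) = \Delta_{q+\omega}(\A)$ by a direct expansion and then to read off all the remaining assertions as purely formal consequences.

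First I would record the two elementary facts that drive the computation. On the scalar side, since $q \in Q_\alg$ lies in some slice $\ci_\j$ and $\omega \in \erre \subseteq \ci_\j$, the element $q + \omega$ again belongs to $\ci_\j \subseteq Q_\alg$; hence $\re(q+\omega) = \re(q) + \omega$, and using $(q+\omega)^c = q^c + \omega$ together with $|p|^2 = pp^c$ on $Q_\alg$ (assumption \eqref{eq:assumption-2bis}) one gets $|q+\omega|^2 = |q|^2 + 2\omega\re(q) + \omega^2$. On the operator side, because $\omega$ is real one has $\A(\omega x) = \A(x\omega) = \A(x)\omega = \omega\A(x)$ for $x \in D(\A)$, so that $(\A - \omega\Id)^2 = \A^2 - 2\omega\A + \omega^2\Id$, and moreover $D((\A - \omega\Id)^2) = D(\A^2)$, since $D(\A - \omega\Id) = D(\A)$ and $\omega x \in D(\A)$ whenever $x \in D(\A)$. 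Substituting these expansions into the definitions of $\Delta_q(\A - \omega\Id)$ and of $\Delta_{q+\omega}(\A)$, both reduce to the same operator $\A^2 - 2(\re(q) + \omega)\A + (|q|^2 + 2\omega\re(q) + \omega^2)\Id$ on $D(\A^2)$, which proves the asserted identity. This is precisely the step in which the hypothesis $\omega \in \erre$ is indispensable: for non-real $\omega$ the operator $\omega\Id$ need not commute with $\A$ and $q + \omega$ need not lie in $Q_\alg$.

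Granting $\Delta_q(\A - \omega\Id) = \Delta_{q+\omega}(\A)$, the equality $\rho_\s(\A - \omega\Id) = \rho_\s(\A) - \omega$ is immediate from Definition \ref{D:spherical spectral notions}: the condition that $\Delta_q(\A - \omega\Id)$ be bijective with inverse in $\Lin^\r(X)$ is the same as the condition that $\Delta_{q+\omega}(\A)$ be bijective with inverse in $\Lin^\r(X)$, i.e. that $q + \omega \in \rho_\s(\A)$. For such $q$ we then get $\Q_q(\A - \omega\Id) = \Delta_q(\A - \omega\Id)^{-1} = \Delta_{q+\omega}(\A)^{-1} = \Q_{q+\omega}(\A)$. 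Finally, to handle $\C$ I would expand both $\C_q(\A - \omega\Id) = \Q_q(\A - \omega\Id)q^c - (\A - \omega\Id)\Q_q(\A - \omega\Id)$ and $\C_{q+\omega}(\A) = \Q_{q+\omega}(\A)(q+\omega)^c - \A\Q_{q+\omega}(\A)$; inserting $\Q_q(\A - \omega\Id) = \Q_{q+\omega}(\A)$ and $(q+\omega)^c = q^c + \omega$, and using that the real scalar $\omega$ commutes with operators, both sides collapse to $\Q_{q+\omega}(\A)q^c + \omega\Q_{q+\omega}(\A) - \A\Q_{q+\omega}(\A)$.

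Since the whole argument is a short piece of bookkeeping, there is no real obstacle; the only points deserving a moment of care are the verification that $D((\A - \omega\Id)^2)$ and $D(\A^2)$ coincide — so that the operator identity is genuinely between operators with a common domain — and the explicit use of $\omega \in \erre$ to secure both the commutation $\A\omega\Id = \omega\Id\A$ and the membership $q + \omega \in Q_\alg$, which is exactly what breaks down in the non-real case mentioned before the statement.
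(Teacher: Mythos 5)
Your proposal is correct and is exactly the ``straightforward computation'' the paper alludes to just before the statement (the paper itself gives no written proof of this lemma). The expansion of $(\A-\omega\Id)^2$, the identities $\re(q+\omega)=\re(q)+\omega$ and $|q+\omega|^2=|q|^2+2\omega\re(q)+\omega^2$ on $Q_\alg$, the check that $D((\A-\omega\Id)^2)=D(\A^2)$, and the final bookkeeping for $\C$ are all as intended, including the correct identification of where $\omega\in\erre$ is used.
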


\section{Right linear operator semigroups}\label{semigroups}


Throughout this section, we will assume that 
\begin{equation}\label{hp. for sect. semigroups}
  \text{\emph{$\alg$ satisfies \eqref{assumption on A}, \eqref{S_A nonvuota} and \eqref{eq:assumption-2bis}, and $X$ is a Banach two-sided $\alg$-module.}} \notag
\end{equation}


\subsection{Strongly continuous semigroups}

We first recall the natural definition of right linear operator semigroup (cf. \cite{ColSab11} for the quaternionic case and  \cite{GhiRec15} for the general case).

\begin{Def}\label{def semigroup}
A mapping $\T : \clsxint{0,\infty} \funzione \Lin^\r(X)$ is called \emph{(right linear operator)} \emph{semigroup}~if
\begin{align*}
  & \T(t+s) = \T(t)\T(s) \qquad \forall t, s > 0, \\
  & \T(0) = \Id.
\end{align*}
A semigroup $\T$ is called \emph{uniformly continuous} if $\T \in C(\clsxint{0,\infty};\Lin^\r(X))$. 
A semigroup $\T$ is called \emph{strongly continuous} if  $\T(\cdot)x \in C(\clsxint{0,\infty};X)$ for every $x \in X$. The \emph{generator of $\T$} is the right linear operator $\A : D(\A) \funzione X$ defined by
\begin{align*}
  & D(\A) := \big\{x \in X\ :\ \exists \lim_{h \to 0} (1/h)(\T(h)x - x) = (\de/\de t) \T(t)x\big|_{t=0}\big\}, \\
  & \A x := \lim_{h \to 0}\frac{1}{h}(\T(h)x - x), \qquad 
  x \in D(\A).
\end{align*}
\end{Def}

\begin{Rem}\label{Rem real semigroups}
By Lemma \ref{L:L^r chiuso in L}, we could also say that $\T$ is a uniformly continuous (resp. strongly continuous) semigroup in $X$ if and only if $\T$ is $\Lin^\r(X)$-valued and $\T$ is a uniformly continuous (resp. strongly continuous) semigroup in $_\erre X$. \bs
\end{Rem}

Here is the generation theorem relating generators and semigroups (cf. \cite[Section 4]{ColSab11} for the quaternionic case and \cite[Theorem 4.5]{GhiRec15} for the general case).

\begin{Thm}\label{T:A-generation thm}
The following assertions hold.
\begin{itemize}
\item[$(\mr{a})$]
Let $\A : D(\A) \funzione X$ be a closed right linear operator with $D(\A)$ dense in $X$. Suppose that there are constants $M \in \clsxint{1,\infty}$ and $\omega \in \erre$ such that $\opint{\omega,\infty} \subseteq \rho_\s(\A)$ and
  \begin{equation}\label{cond on R^n-A}
    \norma{\C_\lambda(\A)^n}{} \le \frac{M}{(\lambda - \omega)^{n}} \qquad 
    \forall n \in \enne, \quad \forall \lambda > \omega.
  \end{equation}
  Then $\A$ is the generator of the strongly continuous semigroup $\T : \clsxint{0,\infty} \funzione \Lin^\r(X)$ defined by 
  \begin{equation*}
     \T(t)x = \lim_{n \to \infty} e^{t\A_{n}}x, \quad x \in X, \qquad \text{where } \A_{n} := n\A\C_n(\A) \in \Lin^\r(X).
  \end{equation*}
  Moreover, $\norma{\T(t)}{} \le Me^{\omega t}$ for all $t \ge 0$.
\item[$(\mr{b})$]
 Let $\T : \clsxint{0,\infty} \funzione \Lin^\r(X)$ be a strongly continuous semigroup such that there are constants 
 $M \in \clsxint{1,\infty}$ and $\omega \in \erre$ with the following property: $\norma{\T(t)}{} \le Me^{t\omega}$ for all 
 $t \ge 0$. Then the generator $\A$ of $\T$ is closed, $D(\A)$ is dense in $X$, $\opint{\omega,\infty} \subseteq \rho_\s(\A)$ and $\norma{\C_\lambda(\A)^n}{} \le \frac{M}{(\lambda - \omega)^{n}}$ for all $n \in \enne$ and for all $\lambda > \omega$. 
\end{itemize}
In both cases $(\mr{a})$ and $(\mr{b})$, we have that
\begin{equation*}
  \C_\lambda(\A)x = \int_{0}^{\infty}e^{-t\lambda} \T(t)x \de t \qquad \forall \lambda > \omega, \quad \forall x \in X.
\end{equation*}
\end{Thm}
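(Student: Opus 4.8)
The plan is to reduce both parts of Theorem~\ref{T:A-generation thm} to the classical Hille--Yosida (Feller--Miyadera--Phillips) generation theorem on a complex Banach space, exploiting the dictionary between the spherical objects attached to $\A$ and the ordinary complex objects attached to $\A_\j$ for an arbitrarily fixed $\j \in \su_\alg$. The facts that make the translation possible are: $X_\j$ is a complex Banach space carrying the same norm as $_\erre X$ (cf.\ the remarks after Definition~\ref{D: complex space X_j}), the norm of $\Lin^\r(X)$ is the operator norm inherited from $\Lin(_\erre X)$, and, by the Remark following Theorem~\ref{legame risolventi}, $\C_\lambda(\A) = \R_\lambda(\A_\j)$ for every $\lambda \in \erre$; moreover Theorem~\ref{legame risolventi}(i) applied to a real $\lambda$ gives $\lambda \in \rho_\s(\A) \Longleftrightarrow \lambda \in \rho(\A_\j)$, so that $\opint{\omega,\infty} \subseteq \rho_\s(\A)$ is equivalent to $\opint{\omega,\infty} \subseteq \rho(\A_\j)$. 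Finally, closedness of $\A$, density of $D(\A)$, and strong continuity of a semigroup are intrinsic to the real topology and hence pass unchanged between $X$, $_\erre X$ and $X_\j$ (cf.\ Remark~\ref{Rem real semigroups}); likewise the generator in the sense of Definition~\ref{def semigroup}, being defined by a norm limit, has the same graph as the generator of the corresponding semigroup on $X_\j$.

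For part $(\mr a)$ the hypotheses become the classical assumptions on $\A_\j$ in $X_\j$: closed, densely defined, $\opint{\omega,\infty} \subseteq \rho(\A_\j)$, and $\norma{\R_\lambda(\A_\j)^n}{} \le M/(\lambda-\omega)^n$. The classical theorem (see, e.g., \cite[Chapter~II]{EngNag00}) then yields a strongly continuous semigroup $\T_\j$ on $X_\j$ with $\norma{\T_\j(t)}{} \le Me^{\omega t}$, generated by $\A_\j$, obtained as the strong limit of the exponentials of the Yosida approximants $n\A_\j\R_n(\A_\j)$. The point to verify is that $n\A_\j\R_n(\A_\j)$ coincides with $\A_n := n\A\C_n(\A)$ as an element of $\Lin^\r(X)$: since $\C_n(\A) = n\Q_n(\A) - \A\Q_n(\A) \in \Lin^\r(X)$ maps $X$ into $D(\A)$ (because $\Q_n(\A)X = D(\A^2) \subseteq D(\A)$ and $\A(D(\A^2)) \subseteq D(\A)$), the operator $\A_n$ is well defined, bounded and right linear, and it agrees with $n\A_\j\R_n(\A_\j)$ on the common underlying set because $\C_n(\A) = \R_n(\A_\j)$ and $\A$, $\A_\j$ are the same map. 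Hence $e^{t\A_n} = e^{t\,n\A_\j\R_n(\A_\j)}$ belongs to $\Lin^\r(X)$ (a uniformly convergent series of right linear operators in the Banach two-sided $\alg$-algebra $\Lin^\r(X)$), and since $\Lin^\r(X)$ is closed in $\Lin(_\erre X)$ for pointwise convergence (Lemma~\ref{L:L^r chiuso in L}), the limit $\T(t)x := \lim_n e^{t\A_n}x$ defines $\T(t) \in \Lin^\r(X)$ with $\T = \T_\j$ as a map. Therefore $\T$ is a strongly continuous right linear semigroup with $\norma{\T(t)}{} \le Me^{\omega t}$, and its generator, having the same graph as $\A_\j$, equals $\A$.

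For part $(\mr b)$ the same dictionary is run in reverse: $\T$ is also a strongly continuous semigroup on $X_\j$ with $\norma{\T(t)}{} \le Me^{t\omega}$, so by the classical theorem its generator $\A_\j$ is closed, densely defined, $\opint{\omega,\infty} \subseteq \rho(\A_\j)$, and $\norma{\R_\lambda(\A_\j)^n}{} \le M/(\lambda-\omega)^n$; since the right linear generator $\A$ of $\T$ (it is right linear because each $\T(t)$ is and the defining limit is in norm) has the same graph as $\A_\j$, it is closed with dense domain, and Theorem~\ref{legame risolventi}(i) together with $\C_\lambda(\A) = \R_\lambda(\A_\j)$ for $\lambda \in \erre$ turns these properties into $\opint{\omega,\infty} \subseteq \rho_\s(\A)$ and \eqref{cond on R^n-A}. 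The final integral identity follows in both cases from the classical Laplace representation $\R_\lambda(\A_\j)x = \int_0^\infty e^{-t\lambda}\T_\j(t)x\,\de t$ for $\lambda > \omega$, using $\C_\lambda(\A) = \R_\lambda(\A_\j)$ and $\T = \T_\j$, and noting that the Bochner integral is the same whether computed in $X$ or in $X_\j$. The only genuinely nonroutine points are the verification that the Yosida approximants $\A_n$ are right linear — equivalently, that the classical semigroup on $X_\j$ is $\Lin^\r(X)$-valued — and the bookkeeping identifying $\C_\lambda(\A)$ and $\rho_\s(\A)$ with their complex counterparts on $X_\j$; everything else is a transcription of the classical argument.
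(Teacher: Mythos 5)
Your proposal is correct, and it follows exactly the strategy the paper uses for this theorem (which it does not reprove here but cites from \cite{GhiRec15}, describing the method in the introduction as a reduction of the generation theorem to the classical commutative case): fix $\j\in\su_\alg$, pass to the complex Banach space $X_\j$, use Theorem \ref{legame risolventi} and the identity $\C_\lambda(\A)=\R_\lambda(\A_\j)$ for real $\lambda$ to translate the spherical hypotheses into the classical Feller--Miyadera--Phillips hypotheses for $\A_\j$, and then check that the resulting objects (Yosida approximants, their exponentials, the limit semigroup, the generator) are right linear via Lemma \ref{L:L^r chiuso in L}. The details you verify — that $\C_n(\A)$ maps $X$ into $D(\A)$ so $\A_n$ is well defined and bounded, and that the generator of $\T$ on $X$ has the same graph as that of $\T_\j$ on $X_\j$ — are precisely the nonroutine points of this reduction.
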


\subsection{Noncommutative Laplace transform}\label{SS:laplace transform}
In the noncommutative setting a natural notion of argument of a number is provided by the following definition
(cf. \cite[Definition 5.1]{GhiRec15}) that we will use also in the complex case.

\begin{Def}
Define the \emph{argument function} 
$\arg : Q_\alg \setmeno \{0\} \lra \clint{0,\pi}$ on $Q_\alg$ as follows. If $q \in Q_\alg \setmeno \erre$, then there exist, and are unique, $\j \in \su_\alg$, $\rho \in \opint{0,\infty}$ and $\theta \in \opint{0,\pi}$ such that 
$q = \rho e^{\theta\j} \in \mathbb{C}_\j$. Thus, we define $\arg(q):=\theta$. Moreover we set: $\arg(q) := 0$ if 
$q \in \opint{0,\infty}$ and $\arg(q):=\pi$ if $q \in \opint{-\infty,0}$. 
\end{Def}

We need to introduce the following classes of open subsets of $\ci$ and of $Q_\alg$.

\begin{Def}
If $\eta \in \cldxint{0,\pi}$ we define
\begin{equation*}
  D_\eta := \{z \in \ci \setmeno \{0\}\ :\ \arg(z) < \eta\}
\end{equation*}
and the associated circular set
\begin{equation*}
  \Omega_\eta := \Omega_{D_\eta} = \big\{ q \in Q_\alg \setmeno \{0\} \, : \, \arg(q) < \eta \big\}.
\end{equation*}
\end{Def}

Now we prove that the spherical resolvent operator can be written as a suitable Laplace transform. This result is stated in \cite[Theorem 4.2]{ColSab11} in the quaternionic setting and for $n=1$. Here we provide a different proof which also allows to get an integral representation for all the integer slice powers of the resolvent operator, confirming the central role of the slice composition defined in Definition \ref{D: slice composition}.
It is worth noting that Proposition \ref{slice regularity of integral functions} does not apply since the mapping
$q \longmapsto (\T(t)e^{-tq})x$ is not right slice regular even if $q \longmapsto \T(t)e^{-tq}$ is (cf. Remark 
\ref{eq:no-right-slice}).

\begin{Thm}\label{re q > 0 in rho(A)}
Let $\T : \clsxint{0,\infty} \funzione \Lin^\r(X)$ be a strongly continuous semigroup. Suppose there exist 
$M \in \clsxint{1,\infty}$ and $\omega \in \erre$ such that $\norma{\T(t)}{} \le Me^{\omega t}$ for all $t \ge 0$. If $\A$ is the generator of $\T$ then $\omega + \Omega_{\pi/2} \subseteq \rho_\s(\A)$ and, for every $n \in \enne$, we have
\begin{equation}\label{Res = Lap}
  (\C_q(\A)^{\odot_q^n})x = 
  \frac{1}{(n-1)!}\int_0^\infty (\T(t)t^{n-1}e^{-tq})x \de t \qquad \forall q \in \omega + \Omega_{\pi/2}, \quad \forall x \in X.
\end{equation}
In particular
\begin{equation}\label{stima risolvente con re(q)}
  \norma{\C_q(\A)^{\odot_q^n}}{} \le \frac{M}{(\re(q) - \omega)^n} \qquad \forall q \in \omega + \Omega_{\pi/2},
  \quad \forall n \in \enne.
\end{equation}
\end{Thm}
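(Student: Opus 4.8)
The plan is to write each side of \eqref{Res = Lap}, as a function of $q$, in the form $\II_\r(\cdot)(q)$ for a suitable holomorphic stem function valued in $\Lin^\r(X)\otimes_\erre\ci$, and then to identify the two stem functions by the identity principle, reducing matters to the real half‑line where the statement is classical. First I would fix $\j\in\su_\alg$ and view $\T$ as a strongly continuous semigroup of bounded $\ci$-linear operators on the complex Banach space $X_\j$ (which has the same norm as $X$, by Lemma \ref{norma moltiplicativa}); its generator is $\A_\j$ and $\norma{\T(t)}{\Lin(X_\j)}=\norma{\T(t)}{}\le Me^{\omega t}$. Classical semigroup theory (see e.g.\ \cite[Ch.~II]{EngNag00}) then gives $\{\lambda\in\ci:\Re(\lambda)>\omega\}\subseteq\rho(\A_\j)$ and, for such $\lambda$ and every integer $k\ge1$, the strong integral representation $\R_\lambda(\A_\j)^k=\frac{1}{(k-1)!}\int_0^\infty t^{k-1}e^{-\lambda t}\T(t)\,\de t$ together with $\norma{\R_\lambda(\A_\j)^k}{}\le M(\Re(\lambda)-\omega)^{-k}$. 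Since $\Re(\lambda)>\omega$ forces $\Re(\overline{\lambda})>\omega$, Theorem \ref{legame risolventi} (applicable because $\A$ is closed by Theorem \ref{T:A-generation thm}) yields $\phi_\j(\lambda)\in\rho_\s(\A)$ for every such $\lambda$; letting $\j$ vary and using that $\omega+\Omega_{\pi/2}=\{q\in Q_\alg:\re(q)>\omega\}$, this proves $\omega+\Omega_{\pi/2}\subseteq\rho_\s(\A)$.

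Next, let $\so{F}$ be the holomorphic stem function of $q\longmapsto\C_q(\A)$ provided by Proposition \ref{q -> C_q slice regular} (applicable since $\rho_\s(\A)\cap\erre\neq\varnothing$); then the slice function $\C_\bullet(\A)^{\odot n}$, whose value at $q$ is $\C_q(\A)^{\odot_q^n}$, has stem function $\so{F}^n$, the power being computed in the complex algebra $\Lin^\r(X)\otimes_\erre\ci$. Let $D\subseteq\ci$ be such that $\Omega_D=\rho_\s(\A)$ (the domain of $\so{F}$) and $D':=\{z\in\ci:\Re(z)>\omega\}$, an open connected conjugation‑invariant set with $\Omega_{D'}=\omega+\Omega_{\pi/2}$, hence $D'\subseteq D$. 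On $D'$ I would introduce the candidate stem function
\[
  \mathcal{E}_n(z):=\frac{1}{(n-1)!}\int_0^\infty t^{n-1}e^{-tz}\T(t)\,\de t\ \in\ \Lin^\r(X)\otimes_\erre\ci ,
\]
the integral taken in the strong sense; it converges since $\norma{e^{-tz}\T(t)}{}\le Me^{(\omega-\Re(z))t}$, so $\mathcal{E}_n(z)$ is a well‑defined element of $\Lin^\r(X)\otimes_\erre\ci$, it is a stem function because $\overline{e^{-tz}}=e^{-t\overline{z}}$, and differentiating under the integral sign (legitimate: the $z$-derivatives of $t^{n-1}e^{-tz}$ are dominated locally uniformly in $z\in D'$ by integrable multiples of $\norma{\T(t)}{}$, so the difference quotients converge in operator norm) shows $\mathcal{E}_n$ holomorphic. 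Here is the only genuine obstacle: since $\T$ is merely strongly continuous, Proposition \ref{slice regularity of integral functions} does not apply and these integrals must be handled in the strong sense throughout. Finally, unwinding the definition of $\II_\r$ and using $e^{-tq}=\phi_\j(e^{-tz})=\Re(e^{-tz})+\Im(e^{-tz})\j$ together with the right linearity of $\T(t)$, one obtains, for $q=\phi_\j(z)$ and $x\in X$,
\[
  \bigl(\II_\r(\mathcal{E}_n)(q)\bigr)x=\frac{1}{(n-1)!}\int_0^\infty t^{n-1}\T(t)\bigl(e^{-tq}x\bigr)\,\de t ,
\]
which is exactly the right‑hand side of \eqref{Res = Lap}. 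Thus the statement reduces to the equality of stem functions $\so{F}^n=\mathcal{E}_n$ on $D'$.

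For this I would invoke the identity principle: $\so{F}^n$ and $\mathcal{E}_n$ are holomorphic on the connected open set $D'$, so it suffices that they agree on $D'\cap\erre=\opint{\omega,\infty}$. For real $r>\omega$, since the second component of a stem function vanishes on $\erre$ and $\II_\r(\so{F})$ takes the value $\C_r(\A)$ there, we have $\so{F}(r)=\C_r(\A)\,\1$, hence $\so{F}(r)^n=\C_r(\A)^n\,\1$; and since $e^{-tr}\in\erre$ we have $\mathcal{E}_n(r)=\bigl(\frac{1}{(n-1)!}\int_0^\infty t^{n-1}e^{-tr}\T(t)\,\de t\bigr)\1$, which, using $\C_r(\A)=\R_r(\A_\j)$ for real $r$ (remark after Theorem \ref{legame risolventi}) and the representation of the first step with $\lambda=r$, equals $\R_r(\A_\j)^n\,\1=\C_r(\A)^n\,\1$. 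Hence $\mathcal{E}_n(r)=\so{F}(r)^n$ for all real $r>\omega$, so $\so{F}^n=\mathcal{E}_n$ on $D'$, and therefore $\C_q(\A)^{\odot_q^n}=\II_\r(\so{F}^n)(q)=\II_\r(\mathcal{E}_n)(q)$ for all $q\in\omega+\Omega_{\pi/2}$, which is \eqref{Res = Lap}. The estimate \eqref{stima risolvente con re(q)} then follows at once: for such $q$, for $x\in X$ and $t\ge0$ one has $e^{-tq}\in Q_\alg$ and $|e^{-tq}|=e^{-t\re(q)}$ by \eqref{eq:assumption-2bis}, so Lemma \ref{norma moltiplicativa} and \eqref{Res = Lap} give
\[
  \norma{\C_q(\A)^{\odot_q^n}x}{}\ \le\ \frac{M\norma{x}{}}{(n-1)!}\int_0^\infty t^{n-1}e^{-(\re(q)-\omega)t}\,\de t\ =\ \frac{M\norma{x}{}}{(\re(q)-\omega)^n}.
\]
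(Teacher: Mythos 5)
Your proof is correct and follows essentially the same route as the paper's: classical semigroup theory on $X_\j$ plus Theorem \ref{legame risolventi} for the resolvent-set inclusion, a holomorphic stem function built from strong (not operator-norm) integrals with differentiation under the integral sign justified by dominated convergence on the difference quotients, identification with $\C_r(\A)^n$ on $\opint{\omega,\infty}$ via the classical Laplace representation, and the identity principle (the paper phrases this last step through Proposition \ref{f regular iff f slice holomorphic} and Lemma \ref{slice analytic continuation}, which is equivalent to your direct argument on the stem functions). The obstacle you flag --- that Proposition \ref{slice regularity of integral functions} is unavailable and the integrals must be handled in the strong sense --- is exactly the point the paper emphasizes before stating the theorem.
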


\begin{proof}
Fix $\j \in \su_\alg$ and let $\S_\j : \clsxint{0,\infty} \funzione \Lin(X_\j)$ be defined by $\S_\j(t) := \T(t)$. It follows that $\S_\j$ is a strongly continuous semigroup satisfying the estimate $\norma{\S_\j(t)}{} \le Me^{\omega t}$ for every $t \ge 0$ and its generator is the operator $\A_\j : D(\A_\j) \funzione X_\j$ defined by $D(\A_\j) := D(\A)$ and $\A_\j x := \A x$ for every $x \in D(\A_\j)$. Therefore from the classical theory we have that 
$\omega + D_{\pi/2} \subseteq \rho(\A_\j)$, the resolvent set of $\A_\j$, hence $\omega + \Omega_{\pi/2} \subseteq \rho_\s(\A)$ by virtue of Theorem \ref{legame risolventi}.

If $n \in \enne$, $q \in \omega + \Omega_{\pi/2}$, and $x \in X$ are fixed, then a standard $2\eps$-argument shows that $t \longmapsto (\T(t)t^{n-1}e^{-tq})x = \T(t)(t^{n-1}e^{-tq}x)$ is continuous.
Moreover we have that 
$\norma{\T(t)(t^{n-1}e^{-tq}x)}{} \le Mt^{n-1}e^{t(\omega - \re(q))}\norma{x}{}$ for every 
$t \ge 0$, therefore we can define the following $X$-valued Lebesgue integral 
\begin{equation}\label{Lap(q)x}
 \Lap_n(q)x :=
 \int_0^\infty (\T(t)t^{n-1}e^{-tq})x \de t, \qquad 
 q \in \omega + \Omega_{\pi/2}, \quad 
 x \in X.
\end{equation} 
Now we show that \eqref{Lap(q)x} defines a right slice regular function 
$\Lap_n: \omega + \Omega_{\pi/2} \funzione \Lin^\r(X)$. From
the right linearity of $\T(t)$ and from the definition of $X$-valued Lebesgue integral it follows that 
$\Lap_n(q)$ is right linear, moreover
$\norma{\Lap_n(q)x}{} \le \int_0^\infty \norma{\T(t)t^{n-1}e^{-tq}x}{} \de t \le 
\norma{x}{}M \int_0^\infty t^{n-1}e^{t(\omega - \re(q))}\de t$, thus actually
$\Lap_n(q) \in \Lin(_\erre X)$ for every $q \in \omega+\Omega_{\pi/2}$. By a direct computation, it is immediate to verify that $\int_0^\infty t^{n-1}e^{t(\omega - \re(q))}\de t \le (n-1)!(\re(q)-\omega)^{-n}$. In particular, we have that
\begin{equation} \label{eq:stima-Ln}
\norma{\Lap_n(q)}{} \leq \frac{M(n-1)!}{(\re(q)-\omega)^n}.
\end{equation}
For every fixed $t \in \opint{0,\infty}$, let $F_n^{t} = (F_{n,1}^{t},F_{n,2}^{t}):\ci \funzione \alg \otimes_\erre \ci$ be the stem function such that $t^{n-1}\exp^{-t} = \II_\r(F_n^{t})$: $F_{n,1}(z):=t^{n-1}e^{-t\Re(z)}\cos(t\Im(z))$ and 
$F_{n,2}(z):=-t^{n-1}e^{-t\Re(z)}\sin(t\Im(z))$. Thanks to \eqref{repr. formula 2} it makes sense to define 
$\mathcal{L}_{n,k} : \omega+D_{\pi/2} \funzione \Lin^\r(X)$, $k = 1, 2$, by
\[
  \mathcal{L}_{n,k}(z)x := \int_0^\infty (\T(t)F_{n,k}^{t}(z))x \de t, \qquad 
  z \in \omega+D_{\pi/2}, \quad 
  x \in X.
\]
Then $\mathcal{L}_n := (\mathcal{L}_{n,1}, \mathcal{L}_{n,2})$ is a stem function and for every $\j \in \su_\alg$, 
$z \in \omega+D_{\pi/2}$ and $x \in X$, we have that
\begin{align}
  (\mathcal{L}_{n,1}(z) + \mathcal{L}_{n,2}(z)\j)x
    & = \int_0^\infty \big(\T(t)F_{n,1}^{t}(z)x + \T(t)F_{n,2}^{t}(z)\j x\big) \de t \notag \\
    & = \int_0^\infty \T(t)(F_{n,1}^{t}(z) + F_{n,2}^{t}(z)\j)x \de t \notag \\
    & = \int_0^\infty \T(t)t^{n-1}e^{-t\phi_\j(z)} x \de t = \Lap_n(\phi_\j(z))x, \notag
\end{align}
therefore $\Lap_n = \II_\r(\mathcal{L}_n)$ is a right slice function.

Consider $\j \in \su_\alg$ and the map $(\Lap_n)_\j:\omega+D_{\pi/2} \funzione (\Lin^\r(X))_\j$ defined by setting 
$(\Lap_n)_\j:=\Lap_n \circ \phi_\j$. Let us show that $(\Lap_n)_\j$ is of class $C^1$. Denote by $(r,s)$ the real coordinates in $\ci$, and by $\partial_r$ and $\partial_s$ the partial derivatives $\partial/\partial r$ and $\partial/\partial s$, respectively. Observe that, since $\ci \funzione \ci : z \longmapsto e^{-tz}$ is  holomorphic, we have that 
$\partial_r e^{-t\phi_\j(z)}+\partial_s e^{-t\phi_\j(z)}\j=0$ for every $z \in \ci$. Define the mappings 
$\mathsf{D}_{n,\j,r}, \mathsf{D}_{n,\j,s} : \omega+D_{\pi/2} \funzione (\Lin^\r(X))_\j$ by setting
\[
  \mathsf{D}_{n,\j,r}(z)x := \int_0^\infty \T(t)t^{n-1}(\partial_r e^{-t\phi_\j(z)})x \de t,
  \quad 
  \mathsf{D}_{n,\j,s}(z)x := \int_0^\infty \T(t)t^{n-1}(\partial_s e^{-t\phi_\j(z)})x \de t,
\]
for all $x \in X$, $z \in \omega+D_{\pi/2}$. A $2\eps$-argument shows again that $\mathsf{D}_{n,\j,r}$ and 
$\mathsf{D}_{n,\j,s}$ are continuous. Moreover, for every $z \in \omega+D_{\pi/2}$ and for every $h \in \erre \setmeno \{0\}$ such that $z + h \in \omega+D_{\pi/2}$, we find
\begin{align}\label{stima per D_rH}
   & \sp \left\|\frac{(\Lap_n)_\j(z+h) - (\Lap_n)_\j(z)}{h} - \mathsf{D}_{n,\j,r}(z)\right\| \notag \\
    & = \sup_{\norma{x}{}\le 1} 
             \left\|
               \int_0^\infty  \T(t)t^{n-1}\left(\frac{e^{-t\phi_\j(z+h)} - e^{-t\phi_\j(z)}}{h} - \partial_r e^{-t\phi_\j(z)}\right)x \de t\right\| \notag \\
    & \le \sup_{\norma{x}{}\le 1}\int_0^\infty 
            \left\|  \T(t)t^{n-1}
            \left(\frac{e^{-t\phi_\j(z+h)} - e^{-t\phi_\j(z)}}{h} - \partial_r e^{-t\phi_\j(z)}\right)x \right\| \de t 
            \notag \\
    & \le \int_0^\infty 
            M t^{n-1}e^{\omega t}\left|\frac{e^{-t\phi_\j(z+h)} - e^{-t\phi_\j(z)}}{h} - \partial_r e^{-t\phi_\j(z)}\right|  \de t,
\end{align}
where the last integral is finite because
\begin{align}
  & \sp M t^{n-1}e^{\omega t} \left|\frac{e^{-t(r+h+s\j)} - e^{-t(r+s\j)}}{h}- \partial_r e^{-t(r+s\j)}\right|  \notag \\
  &   = M t^{n-1}e^{\omega t}|e^{-t(r+s\j)}| \left|\frac{e^{-th} - 1}{h} + t\right| 
    \le M t^{n-1}e^{t\omega} e^{-tr} 2t = 2 M t^n e^{t(r - \omega)}. \notag 
\end{align}
Moreover the last integrand in \eqref{stima per D_rH} converges to zero as $h \to 0$, therefore we can apply the dominated convergence theorem, obtaining that $\partial_r(\Lap_n)_\j= \mathsf{D}_{n,\j,r}$. The proof that
$\partial_s(\Lap_n)_\j= \mathsf{D}_{n,\j,s}$ is entirely analogous. It follows that 
$(\Lap_n)_\j \in C^1(\omega+D_{\pi/2};(\Lin^\r(X))_\j)$. Moreover, for every $x \in X$, we have
\begin{align}
  \big(\partial_r(\Lap_n)_\j(z) + i\partial_s(\Lap_n)_\j(z)\big) x
  & = \partial_r(\Lap_n)_\j(z)x + \partial_s(\Lap_n)_\j(z)(\j x) 
   = \mathsf{D}_{n,\j,r}x+\mathsf{D}_{n,\j,s}(\j x) \notag \\
   & = \int_0^\infty \big(\T(t)t^{n-1}(\partial_r e^{-t\phi_\j(z)} + \partial_s e^{-t\phi_\j(z)}\j)x\big) \de t  = 0.\notag
\end{align}
From Proposition \ref{f regular iff f slice holomorphic} we infer that $\Lap_n$ is right slice regular. Thanks to Proposition \ref{q -> C_q slice regular} the function 
$q \longmapsto \C_q(\A)^{\odot_q^n}$ is right slice regular as well, and from \cite[Theorem 4.5]{GhiRec15} we have that 
$\C_r(\A) = \R_r(\A) = \Lap_1(r)$ for every $r > \omega$. Furthermore, from \eqref{stem risolv sf 1}, 
\eqref{stem risolv sf 2}, \eqref{useful} and from \eqref{esplicita slice operatorial composition}, it follows that the value of $\C_q(\A)^{\odot_q^n}$ at $q=r$, which we denote by $\C_r(\A)^{\odot_r^n}$, coincides with 
$\C_r(\A)^n$ for every $r > \omega$. From the classical semigroup theory applied to 
$\T : \clsxint{0,\infty} \funzione \Lin(_\erre X)$, we know that $(n-1)!\R_r(\A)^n = \Lap_n(r)$ (cf. 
Remark \ref{Rem real semigroups} and \cite[Corollary 1.11, p. 56]{EngNag00}). Therefore 
$(n-1)!\C_r(\A)^{\odot_r^n} = \Lap_n(r)$ for every $r > \omega$, thus Lemma \ref{slice analytic continuation} implies \eqref{Res = Lap}, i.e. $(n-1)!\C_q(\A)^{\odot_q^n} = \Lap_n(q)$. Estimate \eqref{stima risolvente con re(q)} is now an immediate consequence of \eqref{eq:stima-Ln}.
\end{proof}


\subsection{Uniformly continuous semigroups}

Strongly continuous semigroups are somehow the less regular class of semigroups. At the other extreme there are the 
uniformly continuous semigroups. We have the following result.

\begin{Thm}
Let 
$\T : \clsxint{0,\infty} \funzione \Lin^\r(X)$ be a strongly continuous semigroup and let $\A$ be its generator. Then 
$\T$ is a uniformly continuous semigroup if and only if $\A \in \Lin^\r(X)$. In this case $\T(t) = e^{t\A}$
for every $t \ge 0$ and the mapping $\exp^\A : Q_\alg \funzione \Lin^\r(X)$ defined by
\[
  \exp^\A(q) := \sum_{n \ge 0}\frac{\A^n}{n!}q^n, \qquad 
  q \in Q_\alg,
\]
is the unique right slice regular extension of $\T$.
\end{Thm}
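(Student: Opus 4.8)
\emph{Strategy.} The plan is to reduce the equivalence to the classical theory of uniformly continuous semigroups on the real Banach space $_\erre X$, and then to obtain the slice-regular extension from the power-series machinery of Section~\ref{Es funzioni right slice regular} together with the identity principle. For the equivalence, Remark~\ref{Rem real semigroups} says that $\T$ is uniformly continuous in $X$ exactly when it is a uniformly continuous semigroup on $_\erre X$, and in that case the classical generator of $\T$ on $_\erre X$ coincides, as a map, with $\A$. By the classical characterization of uniformly continuous semigroups (see, e.g., \cite[Chapter~I]{EngNag00}) this happens if and only if that generator is bounded and everywhere defined on $_\erre X$; since $\A$ is right linear, this is precisely $\A\in\Lin^\r(X)$. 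Conversely, if $\A\in\Lin^\r(X)\subseteq\Lin(_\erre X)$, the classical theory produces the uniformly continuous semigroup $t\longmapsto\sum_{n\ge0}(t\A)^n/n!$ on $_\erre X$, whose generator is $\A$; by uniqueness of the strongly continuous semigroup with a given generator (cf.\ also Theorem~\ref{T:A-generation thm}) it must equal $\T$, so $\T$ is uniformly continuous. The partial sums lie in $\Lin^\r(X)$, which is closed in $\Lin(_\erre X)$ by Lemma~\ref{L:L^r chiuso in L}, so the identity holds in $\Lin^\r(X)$ and reads $\T(t)=\sum_{n\ge0}(t\A)^n/n!=e^{t\A}$ in the notation of Section~\ref{S:slice regular examples}.

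\emph{The extension $\exp^\A$.} From $\|\A^n/n!\|\le\|\A\|^n/n!$ we get $\limsup_{n\to\infty}\sqrt[n]{\|\A^n/n!\|}=0$, so by the Abel-type statement of Section~\ref{Es funzioni right slice regular} the series $\sum_{n\ge0}(\A^n/n!)q^n$ converges on all of $Q_\alg$ and defines a right slice regular function $\exp^\A:Q_\alg\funzione\Lin^\r(X)$, namely $\exp^\A=\II_\r(S)$ with the holomorphic stem function $S=(S_1,S_2)$, $S_1(z)=\sum_{n\ge0}(\A^n/n!)\Re(z^n)$, $S_2(z)=\sum_{n\ge0}(\A^n/n!)\Im(z^n)$. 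Evaluating at a real $t\ge0$ gives $\exp^\A(t)=\sum_{n\ge0}(\A^n/n!)t^n=e^{t\A}=\T(t)$, so $\exp^\A$ restricts to $\T$ on $\clsxint{0,\infty}$.

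\emph{Uniqueness.} Let $g:Q_\alg\funzione\Lin^\r(X)$ be any right slice regular function with $g(t)=\T(t)$ for all $t\ge0$; write $g=\II_\r(G)$ with $G=(G_1,G_2):\ci\funzione\Lin^\r(X)\otimes_\erre\ci$ holomorphic, where $\ci$ is the connected conjugation-invariant set with $Q_\alg=\Omega_\ci$. Comparing with the representation formulas \eqref{repr. formula 2}, for real $r\ge0$ we obtain $G_1(r)=g(r)=\T(r)=\exp^\A(r)=S_1(r)$, while $G_2(r)=0=S_2(r)$ for every real $r$. Hence $G_1-S_1$ and $G_2-S_2$ are holomorphic on the connected open set $\ci$ and vanish on $\clsxint{0,\infty}$, which has accumulation points; the classical identity theorem forces $G=S$ on $\ci$, so $g=\II_\r(G)=\II_\r(S)=\exp^\A$. (Equivalently, $g-\exp^\A$ is right slice regular on $Q_\alg$, vanishes on $\clsxint{0,\infty}$, hence on all of $Q_\alg\cap\erre$ by ordinary analytic continuation of the stem functions, and then Lemma~\ref{slice analytic continuation} applies.)

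I expect the only delicate point to be the bookkeeping at the interface between the classical real-linear theory and the right-linear framework — the identification of the classical generator with $\A$ and the use of Lemma~\ref{L:L^r chiuso in L} to keep the exponential series inside $\Lin^\r(X)$ — together with the mild subtlety that $\T$ is a priori defined only on the half-line $\clsxint{0,\infty}$, so the identity principle for $\exp^\A$ must be run off $\clsxint{0,\infty}$ rather than off all of $\erre$.
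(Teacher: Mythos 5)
Your proposal is correct and takes essentially the same route as the paper, which simply cites \cite{GhiRec15} for the boundedness equivalence and then invokes Lemma \ref{L:the exponential exp_c(q)}$(\mr{i}')$ together with Lemma \ref{slice analytic continuation} for the uniqueness of the slice regular extension; your reduction to the classical theory on $_\erre X$ and your power-series construction of $\exp^\A$ via Section \ref{Es funzioni right slice regular} are exactly the intended arguments. The only cosmetic point to adjust is that $G_1-S_1$ and $G_2-S_2$ are not separately holomorphic --- they are coupled by the Cauchy--Riemann-type system \eqref{eq:c-r-type-system} --- so the identity theorem should be applied to the single holomorphic map $G-S:\ci\funzione\Lin^\r(X)\otimes_\erre\ci$, which vanishes on $\clsxint{0,\infty}$; with that reading your argument is complete, and you rightly flag the subtlety (glossed over by a bare appeal to Lemma \ref{slice analytic continuation}) that the agreement with $\T$ is a priori available only on the half-line rather than on all of $\erre$.
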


\begin{proof}
The fact that $\A$ is bounded if and only if $\T(t)  = e^{tA}$ for every $t \ge 0$ is proved in \cite[Thereom 4.3]{GhiRec15}. 
The last statement follows immediately from Lemma \ref{L:the exponential exp_c(q)}$(\mr{i}')$ and Lemma 
\ref{slice analytic continuation}.
\end{proof}


\subsection{Slice regular semigroups}

While uniformly continuous semigroups admits a power series representation by means of the exponential function, in the strongly continuous case this representation is not possible, since the generator is not bounded. As in the classical complex case it is possible to develop a quaternionic functional calculus (cf. \cite{ColSabStr08}) that allows to represent $e^{t\A}$ via a Cauchy integral formula if $\A$ is bounded. However the counterpart of this functional calculus for unbounded operators (cf. \cite{CoGeSaSt10}) does not apply to the exponential function $e^{tA}$, i.e. to semigroups. Nevertheless in
\cite{GhiRec15} we show that a Cauchy integral formula representation is possible if the generator of the semigroup is \emph{spherical sectorial}, a natural quaternionic generalization of complex sectorial operators. Now we recall the definition of spherical sectorial operator and in the next section we are going to prove that the semigroups generated by spherical sectorial operators are exactly those who can be extended to a right slice regular operatorial functions on a spherical sector of $\alg$. Moreover this extension satisfies a suitable ``noncommutative semigroup law'', originating what we call the class of \emph{right slice regular semigroup}. This result casts a bridge between
the theory of semigroups on Banach two-sided $\alg$-modules and the theory of slice regular (operatorial) functions.

\begin{Def}
Let $\A : D(\A) \funzione X$ be a closed right linear operator, let $\delta \in \cldxint{0,\pi/2}$ and let $\omega \in \erre$. We say that $\A$ is a \emph{spherical $\delta$-sectorial operator with vertex $\omega$} if 
\begin{equation*}
  \omega + \Omega_{\pi/2+\delta} = 
  \big\{q \in Q_\alg \setmeno \{\omega\} \, : \, \arg(q - \omega) < \pi/2 + \delta\big\} \subseteq \rho_\s(\A).
\end{equation*}
If $\A$ is a spherical $\delta$-sectorial operator with vertex $\omega$ for some $\delta \in \cldxint{0,\pi/2}$, then we say that $\A$ is a \emph{spherical sectorial operator with vertex $\omega$}. If in addition $\omega=0$, we simply say that $\A$ is a \emph{spherical sectorial operator}.
\end{Def}

The starting point of our analysis is the next result (cf. \cite[Theorem 5.6]{GhiRec15}) where we prove that
a spherical sectorial operator generates a strongly continuous semigroup represented by a suitable noncommutative
Cauchy integral formula. We state here this theorem in a form which is slightly more general than in \cite{GhiRec15}. In order to do this, we need some preparations.

Let $\j \in \su_\alg$. Recall that, given an interval $I$ of $\erre$, a $C^1$-path $\gamma:I \funzione \ci_\j$, a map 
$f:\gamma(I) \funzione X$ and a function $g:\gamma(I) \funzione \alg$, one can define the integral 
$\int_\gamma f(\alpha)\de \alpha \, g(\alpha) \in X$ (if it exists) by setting
\begin{equation}\label{def integrale}
\int_\gamma f(\alpha)\de \alpha \, g(\alpha) :=\int_If(\gamma(t))\gamma'(t)g(\gamma(t))\de t.
\end{equation}
If $\{\gamma_\ell:I_\ell \funzione \ci_\j\}_{\ell=1}^n$ is a finite family of $C^1$-paths of $\ci_\j$, $\Gamma$ is the formal sum $\sum_{\ell=1}^n\gamma_i$, and $f$ and $g$ are defined on $\bigcup_{\ell=1}^n\gamma_\ell(I_\ell)$, then we define $\int_\Gamma f(\alpha)\de \alpha \, g(\alpha):=\sum_{\ell=1}^n\int_{\gamma_\ell}f(\alpha)\de \alpha \, g(\alpha)$. If the image of $g$ is contained in $\ci_\j$, then we write $\int_\Gamma f(\alpha)g(\alpha)\de \alpha$ in place of $\int_\Gamma f(\alpha)\de \alpha \, g(\alpha)$, because $\gamma_\ell'(t)$ and $g(\gamma_\ell(t))$ commutes for every $\ell \in \{1,\ldots,n\}$ and for every $t \in I_\ell$. We refer the reader to \cite[Section 6]{GhiRec15} for more details concerning this kind of integrals.

Let $r \in \opint{0,\infty}$ and let $\eta \in \opint{0,\pi}$. Denote by $\ray^-(\j \, ;r;-\eta):\cldxint{-\infty,-r} \lra \CC_\j$, 
$\ce(\j \, ;r;\eta):\clint{-\eta,\eta} \funzione \ci_\j$ and $\ray^+(\j \, ;r;\eta):\cldxint{r,\infty} \lra \CC_\j$ the $C^1$-paths of 
$\ci_\j$ given by
\begin{alignat*}{4}
& \ray^-(\j \, ;r;-\eta)(t):=-te^{-\eta\j} & \qquad & \forall t \in \cldxint{-\infty,-r},\\
& \ce(\j \, ;r;\eta)(t):=re^{t\j} & \qquad & \forall t \in \clint{-\eta,\eta},\\
& \ray^+(\j \, ;r;\eta)(t):=te^{\eta\j}& \qquad & \forall t \in \clsxint{r,\infty}.
\end{alignat*}
Define $\Gamma(\j \, ;r;\eta)$ as the following formal sum of $C^1$-paths of $\ci_\j$:
\[
\Gamma(\j \, ;r;\eta):=\ray^-(\j \, ;r;-\eta)+\ce(\j \, ;r;\eta)+\ray^+(\j \, ;r;\eta).
\]

The mentioned 
slightly more general version of \cite[Theorem 5.6]{GhiRec15} reads as follows.

\begin{Thm}\label{main thm AMS}
Let $\A : D(\A) \funzione X$ be a spherical $\delta$-sectorial operator with vertex $\omega$. Suppose that $D(\A)$ is dense in $X$ and there exists $K > 0$ such that
\begin{equation*}
\norma{\C_q(\A)}{} \le \frac{K}{|q - \omega|} \qquad \forall q \in \omega + \Omega_{\pi/2+\delta}.
\end{equation*}
If $\j \in \su_\alg$, $r \in \opint{0,\infty}$ and $\eta \in \opint{\pi/2,\pi/2+\delta}$, then the integral
\begin{equation}
   \T(t) := \frac{1}{2\pi} \int_{\omega+\Gamma(\j \,;r;\eta)} \C_\alpha(\A) \, \j^{-1} e^{t\alpha}\de \alpha 
   \label{T(t), t > 0} \qquad \forall t > 0 
\end{equation}
is convergent in $\Lin^\r(X)$ and is independent of $\j$, $r$ and $\eta$. If we set $\T(0) := \Id$, then \eqref{T(t), t > 0} defines a strongly continuous semigroup $\T : \clsxint{0,\infty} \funzione \Lin^r(X)$ which is real analytic in $\opint{0,\infty}$ and whose generator is $\A$.
\end{Thm}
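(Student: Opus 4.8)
The plan is to reduce the statement to the already established case $\omega=0$, i.e.\ \cite[Theorem 5.6]{GhiRec15}, by translating the operator. I would set $\B:=\A-\omega\Id$ and first collect the elementary consequences of Lemma \ref{A <-> A - w}: one has $\rho_\s(\B)=\rho_\s(\A)-\omega$ and $\C_q(\B)=\C_{q+\omega}(\A)$ for every $q\in\rho_\s(\B)$. Since $\omega\in\erre$, $\B$ differs from $\A$ by a bounded operator, so $D(\B)=D(\A)$ is dense in $X$. The inclusion $\omega+\Omega_{\pi/2+\delta}\subseteq\rho_\s(\A)$ rewrites, after the translation, as $\Omega_{\pi/2+\delta}\subseteq\rho_\s(\B)$, so $\B$ is a spherical $\delta$-sectorial operator with vertex $0$; and since $q\in\Omega_{\pi/2+\delta}$ if and only if $q+\omega\in\omega+\Omega_{\pi/2+\delta}$, the estimate $\norma{\C_q(\A)}{}\le K/|q-\omega|$ becomes $\norma{\C_q(\B)}{}=\norma{\C_{q+\omega}(\A)}{}\le K/|q|$ for every $q\in\Omega_{\pi/2+\delta}$. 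Thus $\B$ verifies all hypotheses of \cite[Theorem 5.6]{GhiRec15}.

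Applying that theorem to $\B$, for every $\j\in\su_\alg$, $r\in\opint{0,\infty}$ and $\eta\in\opint{\pi/2,\pi/2+\delta}$ the integral $\S(t):=\frac{1}{2\pi}\int_{\Gamma(\j \,;r;\eta)}\C_\beta(\B)\,\j^{-1}e^{t\beta}\de\beta$, $t>0$, converges in $\Lin^\r(X)$, is independent of $\j$, $r$, $\eta$, and, once we set $\S(0):=\Id$, defines a strongly continuous semigroup $\S:\clsxint{0,\infty}\funzione\Lin^\r(X)$ which is real analytic in $\opint{0,\infty}$ and whose generator is $\B$.

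The key step is then to identify $\T$ with a rescaling of $\S$, via the change of variable $\alpha=\beta+\omega$ in the contour integral. Because $\omega$ is real, $\omega+\Gamma(\j \,;r;\eta)$ is exactly the translate of $\Gamma(\j \,;r;\eta)$ by $\omega$, so each of its $C^1$-paths has the same derivative as the corresponding path of $\Gamma(\j \,;r;\eta)$; substituting $\C_{\beta+\omega}(\A)=\C_\beta(\B)$ and $e^{t(\beta+\omega)}=e^{t\omega}e^{t\beta}$, and factoring out the positive real number $e^{t\omega}$ (which is central in $\alg$), the definition \eqref{def integrale} of the line integral yields $\T(t)=e^{\omega t}\,\S(t)$ for every $t>0$. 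From this identity everything follows at once: the integral defining $\T(t)$ converges in $\Lin^\r(X)$ and is independent of $\j$, $r$, $\eta$; with $\T(0):=\Id$ we have $\T(t)x=e^{\omega t}\S(t)x\to x$ as $t\to 0^+$ for every $x\in X$ (since $\S(t)x\to x$ and $e^{\omega t}\to 1$), while on $\opint{0,\infty}$ continuity and real analyticity of $\T$ follow from those of $\S$, and the semigroup law for $\T$ on $\clsxint{0,\infty}$ follows from the one for $\S$ (for $t,s>0$, $\T(t)\T(s)=e^{\omega t}e^{\omega s}\S(t)\S(s)=e^{\omega(t+s)}\S(t+s)=\T(t+s)$). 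Finally, to compute the generator of $\T$ I would split $\frac{1}{h}(\T(h)x-x)=\frac{e^{\omega h}-1}{h}\,\S(h)x+\frac{1}{h}(\S(h)x-x)$: the first summand converges to $\omega x$, so the left-hand side has a limit as $h\to 0$ exactly when $x\in D(\B)$, and then the limit is $\omega x+\B x=\A x$; hence the generator of $\T$ has domain $D(\B)=D(\A)$ and equals $\A$.

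I do not expect a genuine obstacle here: the delicate analysis — convergence of the Cauchy-type integral on a sector, the noncommutative semigroup property, time analyticity, and the identification of the generator in the noncommutative setting — was already carried out for $\omega=0$ in \cite{GhiRec15}. The only points requiring care are the bookkeeping in the change of variables for the $\ci_\j$-valued line integral \eqref{def integrale} (in particular that translating the contour by the real scalar $\omega$ does not alter the path derivatives and that $e^{t\omega}$ may be pulled out of the integral) and the routine verification that the three hypotheses of \cite[Theorem 5.6]{GhiRec15} pass to $\B$.
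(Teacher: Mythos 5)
Your proposal is correct and follows essentially the same route as the paper: reduce to the vertex-zero case via $\B:=\A-\omega\Id$, apply \cite[Theorem 5.6]{GhiRec15} to $\B$, and identify $\T(t)=e^{\omega t}\S(t)$ by translating the contour. The only difference is that you spell out the generator identification (splitting the difference quotient), which the paper asserts without detail; your verification is correct.
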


\begin{proof}
Here we show how to reduce to the case $\omega = 0$ which is dealt with in \cite[Theorem 5.6]{GhiRec15}. Define 
$\B := \A - \omega\Id$. By Lemma \ref{A <-> A - w}, we know  $\C_{q}(\A - \omega \Id) = \C_{q+\omega}(\A)$ for every 
$q \in \rho_\s(\A - \omega \Id)=\rho_\s(\A)-\omega$. In particular we get that $\B$ is spherical sectorial and 
$\norma{\C_q(\B)}{} \le K/|q|$ for every $q \in \Omega_{\pi/2+\delta}$, thus Theorem \cite[Theorem 5.6]{GhiRec15} applies and we get that $\B$ generates the strongly continuous semigroup $\S : \clsxint{0,\infty} \funzione \Lin^\r(X)$ defined by 
$\S(0) = \Id$ and $\S(t) := (1/2\pi) \int_{\Gamma(\j \, ;r,\eta)} \C_\alpha(\B) \, \j^{-1} e^{\alpha t} \de\alpha$ for $t > 0$, and real analytic in $\opint{0,\infty}$, where the integral is independent of $\j$, $r$ and $\eta$. Therefore, since 
$e^{\omega t} \in \erre$, we obtain that
$\clsxint{0,\infty} \funzione \Lin^\r(X) : t \longmapsto e^{\omega t} \S(t)$ is a strongly continuous semigroup generated by 
$\B+\omega\Id=\A$ and we have
\begin{align*}
  e^{\omega t} \S(t)
    & = e^{\omega t}\frac{1}{2\pi} \int_{\Gamma(\j \, ;r;\eta)} \C_\alpha(\B) \, \j^{-1} e^{\alpha t} \de\alpha 
    =  \frac{1}{2\pi} \int_{\omega+\Gamma(\j \, ;r;\eta)} \C_{\alpha}(\A) \, \j^{-1} e^{\alpha t} \de\alpha = \T(t)
\end{align*}
for all $t>0$. This completes the proof.
\end{proof}

In the next result we introduce a class of line integrals which extend \eqref{T(t), t > 0} to suitable spherical sectors
of $\alg$ and allow to infer the noncommutative semigroup law \eqref{eq:nsl}.

\begin{Lem}\label{T ind. of k}
Let $\A : D(\A) \funzione X$ be a spherical $\delta$-sectorial operator with vertex $\omega$ such that $D(\A)$ dense in $X$ and there exists $K > 0$ with
\begin{equation*}
\norma{\C_q(\A)}{} \le \frac{K}{|q - \omega|} \qquad \forall q \in \omega+\Omega_{\pi/2+\delta}.
\end{equation*}
If $\j \in \su_\alg$, $r \in \opint{0,\infty}$, $\eta \in \opint{\pi/2,\pi/2 + \delta}$ and $p, q \in \Omega_\delta$, then the integral
\begin{equation}
  \T_p(\j \, ;r;\eta,q) := 
  \frac{1}{2\pi} \int_{\omega+\Gamma(\j \,; r; \eta)} \left(\C_\alpha(\A) \, \j^{-1} \de\alpha\ \! \exp^\alpha_{p}(q)\right) \notag
\end{equation}
is absolutely convergent and defines a right slice regular function 
$\T_{p}(\j \, ;r;\eta,\cdot): \Omega_\delta \funzione \Lin^\r(X)$ such that 
\begin{equation}\label{uniform estimate for T(q)}
 \forall \delta' \in \opint{0,\delta} \quad  \exists M_{\delta'} \in \clsxint{1,\infty} \quad : \quad 
 \norma{\T_p(\j \, ;r;\eta,q)}{} \le M_{\delta'} e^{\omega \re(p+q)} \qquad \forall p, q \in \Omega_{\delta'},
\end{equation}
$(M_{\delta'}$ is independent of $q)$. Moreover $\T_p(\j \, ;r;\eta,q)$ does not depend on $\j \in \su_\alg$, $r \in \opint{0,\infty}$ and $\eta \in \opint{\pi/2,\pi/2 + \delta}$.
\end{Lem}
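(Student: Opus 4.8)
The plan is to extend the scheme of Theorem~\ref{main thm AMS} (that is, \cite[Theorem~5.6]{GhiRec15}), replacing the scalar factor $e^{t\alpha}$ there by the right slice regular function $q\longmapsto\exp^\alpha_p(q)$. Write the contour $\omega+\Gamma(\j;r;\eta)$ as the union of its three pieces $\omega+\ray^-(\j;r;-\eta)$, $\omega+\ce(\j;r;\eta)$, $\omega+\ray^+(\j;r;\eta)$ and estimate the corresponding integrals separately. Along each ray $\alpha=\omega+\tau e^{\pm\eta\j}$, so $|\alpha-\omega|=\tau$ and, crucially, $\re(\alpha)=\omega+\tau\cos\eta\to-\infty$ because $\eta>\pi/2$; along the arc, $|\alpha-\omega|=r$ and $\re(\alpha)=\omega+r\cos s$ with $s\in\clint{-\eta,\eta}$ stays bounded. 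This already explains the factor $e^{\omega\re(p+q)}$ in \eqref{uniform estimate for T(q)}: it will be exactly the part $e^{\omega(\re(p)+\re(q))}$ that splits off from an estimate of $|\exp^\alpha_p(q)|$ of exponential type in $\re(\alpha)$.

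The heart of the proof is a pointwise bound on $\norma{\C_\alpha(\A)\,\j^{-1}\exp^\alpha_p(q)}{}$ along the contour. Since $|\j^{-1}|=1$ and $\norma{\C_\alpha(\A)}{}\le K/|\alpha-\omega|$ by hypothesis, everything reduces to controlling $|\exp^\alpha_p(q)|$ for $\alpha$ on the contour and $p,q$ in a fixed subsector $\Omega_{\delta'}$, $\delta'<\delta$ (so that $\re(p),\re(q)>0$ and $|p|,|q|$ are bounded). I would write $\exp^\alpha_p=\II_\r(G)$ with $G(z)=\sum_{n\ge0}\frac{\alpha^n}{n!}(p+z)^n$, the $n$-th power taken in $\alg\otimes_\erre\ci$ (an algebra, since $\alg$ is a Banach two-sided $\alg$-algebra with unit by Example~\ref{exa:b-t-s-a}, and $p$ commutes with $z$ there), so that for $q=\phi_\k(w)$ one has $\exp^\alpha_p(\phi_\k(w))=G_1(w)+G_2(w)\k$, whence $|\exp^\alpha_p(q)|\le|G_1(w)|+|G_2(w)|\le2\norma{G(w)}{\alg\otimes_\erre\ci}$; then, exploiting that $\alpha\in\ci_\j$ has a fixed direction (so $|\alpha^n|=|\alpha|^n$ while $\alpha^n$ merely rotates) together with directional estimates for $\alpha^n(p+q)^{\cdot_q n}$ refining \cite[Inequality~(3.2)]{GhiPer14}, one arrives at $|\exp^\alpha_p(q)|\le C_{\delta'}\,e^{\,\re(\alpha)(\re(p)+\re(q))}$ with $C_{\delta'}$ independent of $\alpha$, $p$, $q$. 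This is the step I expect to be the main obstacle: a naive submultiplicative estimate only gives the useless bound $e^{|\alpha|}$, so one genuinely has to keep track of directions, which is where the non-commutativity bites. Granting this estimate, the integral over the arc is trivially absolutely convergent (bounded integrand, compact parameter interval), while over each ray it is controlled by $\int_r^\infty K\tau^{-1}e^{\omega(\re(p)+\re(q))}e^{\tau\cos\eta(\re(p)+\re(q))}\,\de\tau<\infty$ since $\re(p)+\re(q)>0$ and $\cos\eta<0$; as $\Lin^\r(X)$ is closed in $\Lin(_\erre X)$ (Lemma~\ref{L:L^r chiuso in L}) the value lies in $\Lin^\r(X)$, and integrating the same pointwise bound over $\omega+\Gamma(\j;r;\eta)$ yields \eqref{uniform estimate for T(q)} with a constant $M_{\delta'}\ge1$ independent of $p,q$.

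For right slice regularity in $q$ I proceed as follows. Parametrizing each piece of $\Gamma$ over an interval $I$, the integrand $q\longmapsto\C_{\alpha(t)}(\A)\,\j^{-1}\alpha'(t)\,\exp^{\alpha(t)}_p(q)$ is, for each fixed $t$, the constant operator $\C_{\alpha(t)}(\A)\,\j^{-1}\alpha'(t)\in\Lin^\r(X)$ times the $\alg$-valued right slice regular function $q\longmapsto\exp^{\alpha(t)}_p(q)$, hence right slice regular: this is the operator-valued analogue of Example~\ref{exa:elementary}(b), namely if $c\in\Lin^\r(X)$ and $h=\II_\r(H)$ is $\alg$-valued right slice regular then $q\longmapsto c\,h(q)=\II_\r(cH)$ is right slice regular. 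To pass the integral inside I would invoke Proposition~\ref{slice regularity of integral functions}, localized: by Proposition~\ref{f regular iff f slice holomorphic}, right slice regularity is equivalent to holomorphy of $z\longmapsto\T_p(\j;r;\eta,\phi_\j(z))$, a local property, so I fix $z_0\in D_\delta$, work on a small ball $B\subseteq D_\delta$ about $z_0$, and note that on $B$ the partial derivatives $\partial_r,\partial_s$ of $z\longmapsto\exp^{\alpha(t)}_p(\phi_\j(z))$ are dominated, uniformly in $z\in B$, by an $L^1$ function of $t$ (the same kind of directional estimate as above applied to the $z$-differentiated integrand); Proposition~\ref{slice regularity of integral functions} then gives holomorphy near $z_0$, and, $z_0$ being arbitrary, $\T_p(\j;r;\eta,\cdot)$ is right slice regular on $\Omega_\delta$.

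Finally, the independence of $\j$, $r$ and $\eta$. Since $\Omega_\delta\cap\erre=\opint{0,\infty}\neq\vuoto$ and $\Omega_\delta$ is the circular set over the connected $D_\delta$, Lemma~\ref{slice analytic continuation} reduces everything to the real case $q=t\in\opint{0,\infty}$. For $p=0$ one has $\exp^\alpha_0(t)=e^{t\alpha}$, so $\T_0(\j;r;\eta,t)$ equals the value at $t$ of the semigroup furnished by Theorem~\ref{main thm AMS}, hence is independent of $\j$, $r$, $\eta$; by Lemma~\ref{slice analytic continuation} again, $\T_0(\j;r;\eta,\cdot)$ is independent of $\j$, $r$, $\eta$ on all of $\Omega_\delta$. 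For general $p$, since $t$ is real it commutes with $p$, so Lemma~\ref{L:the exponential exp_c(q)} gives $\exp^\alpha_p(t)=\exp^\alpha(p+t)=\exp^\alpha_0(p+t)$, whence $\T_p(\j;r;\eta,t)=\T_0(\j;r;\eta,p+t)$ with $p+t\in\Omega_\delta$; the right-hand side being $\j,r,\eta$-independent by the previous step, so is $\T_p(\j;r;\eta,t)$ for every $t>0$, and a last application of Lemma~\ref{slice analytic continuation} propagates this to all $q\in\Omega_\delta$. (Independence of $r$ and $\eta$ at fixed $\j$ can also be seen directly by a Cauchy--Goursat argument inside the slice $\ci_\j$, as in \cite[Section~6]{GhiRec15}, using that $\alpha\longmapsto\C_\alpha(\A)$ is right slice regular by Proposition~\ref{q -> C_q slice regular} and that $\alpha\longmapsto\exp^\alpha_p(q)$ is left slice regular, the connecting circular arcs contributing zero in the limit by the decay established above.) This completes the proof.
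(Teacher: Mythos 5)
Your architecture (pointwise estimate of the integrand, then Proposition \ref{slice regularity of integral functions} for regularity in $q$, then reduction of the $\j,r,\eta$-independence to real $q$ via Lemma \ref{slice analytic continuation} and Theorem \ref{main thm AMS}, using $\exp^\alpha_p(t)=\exp^\alpha(p+t)$ for real $t$) matches the paper's, and the regularity and independence steps are essentially correct. The problem is the central quantitative step, which you flag as ``the main obstacle'' and then leave as a black box: the bound $|\exp^\alpha_p(q)|\le C_{\delta'}e^{\re(\alpha)(\re(p)+\re(q))}$ is not only unproved, it is false. Already in the same-slice (hence commutative) situation $\alpha,q\in\ci_\j$, $p=0$, one has $\exp^\alpha(q)=e^{\alpha q}$ and $|e^{(\alpha-\omega)q}|=e^{\re((\alpha-\omega)q)}$, and this exponent is governed by the \emph{angle} between $\alpha-\omega$ and $q$, not by the product of real parts: on the lower ray $\alpha-\omega=\tau e^{-\eta\j}$, with $q=|q|e^{\theta\j}$ and $\eta-\theta<\pi/2$ (which occurs whenever $\theta>\eta-\pi/2$, e.g. $\eta$ near $\pi/2$ and $q$ near the edge of $\Omega_{\delta'}$), one gets $\re((\alpha-\omega)q)=\tau|q|\cos(\eta-\theta)>0$ growing linearly in $\tau$, while $e^{\re(\alpha)\re(q)}$ decays. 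Hence your claimed domination of the ray integrals by $\int_r^\infty K\tau^{-1}e^{\omega\re(p+q)}e^{\tau\cos\eta\,(\re(p)+\re(q))}\,\de\tau$ does not follow, and no ``directional refinement'' of \cite[Inequality (3.2)]{GhiPer14} can produce a bound of that form.

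The device you are missing is the paper's reduction of $\exp^\alpha_p(q)$ to genuine complex exponentials in the slice $\ci_\j$ of the contour. For $\alpha,p,q\in\ci_\j$, Lemma \ref{L:the exponential exp_c(q)}(i)--(ii) give $\exp^\alpha_p(q)=e^{\alpha(p+q)}$, and $|e^{\alpha(p+q)}|=e^{\omega\re(p+q)}\,e^{\re((\alpha-\omega)(p+q))}$, which is exactly the quantity handled by the classical sectorial computation (and is the source of the factor $e^{\omega\re(p+q)}$ in \eqref{uniform estimate for T(q)}). For general $p\in\ci_\k$, $q\in\ci_{\mathbf{h}}$, the paper applies the representation formula \eqref{repr. formula 1} twice, first in $q$ and then in $p$, to write $\exp^\alpha_p(q)$ as a fixed finite combination $\sum_i e^{\alpha w_i}c_i$ with $w_i\in\ci_\j$, $\re(w_i)=\re(p)+\re(q)$, $|w_i|$ controlled by $|p|+|q|$, and constants $c_i\in\alg$ of bounded norm; this reduces absolute convergence and \eqref{uniform estimate for T(q)} entirely to the same-slice case. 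Without this (or an equivalent mechanism) the noncommutativity of $\alpha$ with $p$ and $q$ is not tamed, your bound on $\|G(w)\|$ in $\alg\otimes_\erre\ci$ is never actually obtained, and the convergence of the integral is not established. A further symptom that your estimate cannot be right: it would make the integral converge for \emph{every} $\eta>\pi/2$ uniformly on $\Omega_{\delta'}$, whereas even in the classical complex case the ray integrals converge only when the contour angle exceeds $\pi/2$ plus the argument of $p+q$, the usual interplay between contour and subsector in \cite[Proposition 4.3]{EngNag00}.
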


\begin{proof}
If $\alpha,p, q \in \ci_\j$ then points (i) and (ii) of Lemma \ref{L:the exponential exp_c(q)} imply that 
$\exp_p^\alpha(q) = \exp^\alpha(p+q) = e^{\alpha(p+q)} \in \ci_\j$. Thus, if $\gamma : \opint{0,1} \funzione \ci_\j$ is a piecewise $C^1$ reparametrization of $\Gamma(\j \,; r; \eta)$, we have 
\[
  \int_0^1 \left\|\C_{\omega+\gamma(t)}(\A) \, \j^{-1} \gamma'(t)\ \! \exp^{\omega+\gamma(t)}_{p}(q)\right\| \de t 
  \le e^{\omega \re(p+q)}\int_0^1 \frac{K}{|\gamma(t)|}|\gamma'(t)| e^{\re(\gamma(t)(p+q))} \de t,
\]
hence the absolute convergence of $\T_{p}(\j \, ;r;\eta,q)$ and \eqref{uniform estimate for T(q)} follow from the same argument of the complex case (see, e.g., \cite[Proposition 4.3, p. 97]{EngNag00}). Now we assume that 
$p = p_1 + p_2 \k \in \ci_\k$ and $q = q_1 + q_2 \mathbf{h} \in \ci_{\mathbf{h}}$ for some $\k, \mathbf{h} \in \su_\alg$, $p_m, q_m \in \erre$, $m = 1, 2$. Let us set $p_\j := p_1 + p_1\j \in \ci_\j$, $q_\j := q_1+ q_2\j \in \ci_\j$, and 
$q_\k := q_1 + q_2\k \in \ci_\k$. Since $\exp^\alpha_p$ is a right slice function, we can apply \eqref{repr. formula 1} twice and we find constants $b_m, c_m, d_m \in \alg$, $m = 1, 2$, such that
\begin{align}
  \exp_p^\alpha(q) 
    & =  \exp^\alpha_p(q_\k) b_1 +  \exp^\alpha_p(q_\k^c) b_2 \notag \\
    & =  \exp^\alpha(p + q_\k) b_1 + \exp^\alpha(p + q_\k^c) b_2 \notag \\
    & =  \exp^\alpha(p_\j + q_\j) c_1 + \exp^\alpha(p_\j^c + q_\j^c) d_1 + \exp^\alpha(p_\j + q_\j^c)  c_2 + \exp^\alpha(p_\j^c + q_\j)  d_2, \notag \\
    & =  e^{\alpha(p_\j + q_\j)} c_1 + e^{\alpha(p_\j^c + q_\j^c)} d_1 + e^{\alpha(p_\j + q_\j^c)} c_2 + e^{\alpha(p_\j^c + q_\j)}d_2. \notag
\end{align}
This formula allows to reduce to the previous case when $p, q \in \ci_\j$, thus the absolute convergence of
$\T_p(\j \, ;r;\eta,q)$ and estimate \eqref{uniform estimate for T(q)} are completely proved.

Fix $s \in \opint{0,\infty}$. Let us show that $\Omega_\delta \funzione \Lin^\r(X):p \longmapsto \T_p(\j \, ;r;\eta,s)$ is right slice regular. Consider the map $f_s:\opint{0,1} \times \Omega_\delta \funzione \Lin^\r(X)$ defined by
\[
f_s(t,p):=\C_{\omega+\gamma(t)}(\A) \, \j^{-1} \gamma'(t)\ \! \exp^{\omega+\gamma(t)}_{p}(s)=\big(\C_{\omega+\gamma(t)}(\A) \, \j^{-1} \gamma'(t)\big) \circ \big(\Id\exp^{\omega+\gamma(t)}_{p}(s)\big)
\]
By Lemma \ref{L:the exponential exp_c(q)}(iv), the function $p \longmapsto \exp^{\omega+\gamma(t)}_{p}(s)$ is right slice regular for every $t \in \opint{0,1}$. It follows immediately that, for every $t \in \opint{0,1}$, 
$p \longmapsto \Id\exp^{\omega+\gamma(t)}_{p}(s)$ is right slice regular and hence the same is true for $f_s(t,\cdot)$ (cf. Example \ref{exa:elementary}(b)). Thanks to Proposition \ref{slice regularity of integral functions}, we infer that 
$p \longmapsto \T_p(\j\, ;r;\eta,s)$ is right slice regular as well. Observe that
\begin{align}
  \T_t(\j \, ;r;\eta;s)=\T_0(\j \,;r;\eta,t+s)= \frac{1}{2\pi}\int_{\omega+\Gamma(\j \, ;r;\eta)} \C_\alpha(\A) \, \j^{-1} e^{\alpha(t+s)} \de\alpha \notag
    \qquad \forall t > 0.
\end{align}
Thanks to Theorem \ref{main thm AMS}, we know that $\T_0(\j \,;r;\eta,t+s)$ is independent of $\j$, $r$ and $\eta$. Therefore, by Lemma \ref{slice analytic continuation}, for every $s \in \opint{0,\infty}$ and for every $p \in \Omega_\delta$, we get that $\T_p(\j \, ;r;\eta,s)$ is independent of $\j$, $r$ and $\eta$. Now we fix $p \in \Omega_\delta$. Since 
$\exp_p^\alpha$ is right slice regular for every $\alpha$, proceeding as above, we obtain that 
$q \longmapsto \T_p(\j \,;r;\eta,q)$ is right slice regular. Furthermore, we proved that $\T_p(\j \, ;r;\eta,q)$ is independent of 
$\j$, $r$ and $\eta$ when $q \in \opint{0,\infty}$. Thus, by Lemma \ref{slice analytic continuation}, we get that 
$\T_p(\j \,;r;\eta,q)$ is independent of $\j$, $r$ and $\eta$ for every $p,q \in \Omega_\delta$.
\end{proof}

The previous Lemma \ref{T ind. of k} allows us to give the following definition.

\begin{Def}\label{Def Tp}
Let $\A : D(\A) \funzione X$ be a spherical $\delta$-sectorial operator of vertex $\omega$. Suppose that $D(\A)$ is dense in $X$ and there exists $K > 0$ such that
\begin{equation*}
\norma{\C_q(\A)}{} \le \frac{K}{|q - \omega|} \qquad \forall q \in \omega+\Omega_{\pi/2+\delta}.
\end{equation*}
For every $p \in \Omega_\delta \cup \{0\}$, we define $\T_{p} : \Omega_\delta \funzione \Lin^\r(X)$ by setting
\begin{equation*}
  \T_{p}(q) := 
  \frac{1}{2\pi} \int_{\omega+\Gamma(\j \, ;r;\eta)} \left(\C_\alpha(\A) \, \j^{-1} \de\alpha \ \! \exp^\alpha_{p}(q)\right), \qquad 
  q \in \Omega_\delta,
\end{equation*}
where $\j \in \su_\alg$, $r \in \opint{0,\infty}$ and $\eta \in \opint{\pi/2,\pi/2 + \delta}$ are arbitrarily chosen. Moreover we set $\T := \T_0$, i.e.
\begin{equation*}
  \T(q) := \T_0(q) = 
  \frac{1}{2\pi} \int_{\omega+\Gamma(\j \, ;r;\eta)} \left(\C_\alpha(\A) \, \j^{-1} \de\alpha \ \!\exp^\alpha(q) \right), \qquad 
  q \in \Omega_\delta.
\end{equation*}
\end{Def}

Since in general $\C_\alpha(\A)\j \neq \j\C_\alpha(\A)$ and $\C_\alpha(\A)\exp^\alpha(q) \neq \exp^\alpha(q) \C_\alpha(\A)$, the classical semigroup law fails for $\T$. In the following definition we introduce a new noncommutative semigroup law.

\begin{Def}
If $\delta \in \cldxint{0,\pi/2}$, then we say that $\T : \Omega_\delta \cup \{0\} \funzione \Lin^\r(X)$ is a 
\emph{right slice regular semigroup (of angle $\delta$)} if the restriction 
$\T|_{\Omega_\delta} : \Omega_\delta \funzione \Lin^\r(X)$ is right slice regular and
\begin{align}
  & \T(p+q) = \T(p) \odot_p \T(q) \qquad \text{$\forall p,q \in \Omega_\delta$ with $p+q \in \Omega_\delta$ and $pq = qp$}, \notag \\
  & \T(0) = \Id, \notag \\
  & \lim_{q \to 0} \T|_{\Omega_{\delta'}}(q)x = x \qquad \forall \delta' \in \opint{0,\delta}, \quad \forall x \in X. 
     \label{strong cont in 0}
\end{align}
If $\T$ is a slice regular semigroup of angle $\delta$ for some $\delta \in \cldxint{0,\pi/2}$, then we say that $\T$ is a \emph{right slice regular semigroup}. Moreover we say that a right slice regular semigroup $\T$ of angle $\delta$ is \emph{bounded} if
\begin{equation}
  \forall \delta' \in \opint{0,\delta} \quad \exists M_{\delta'} \in \clsxint{1,\infty} \quad  : \quad  
  \sup_{q \in \Omega_{\delta'}}\norma{\T(q)}{} \le M_{\delta'}. \notag
\end{equation}
\end{Def}


\begin{Lem}\label{analytic T -> strongly cont T}
If $\T : \Omega_\delta \cup \{0\} \funzione \Lin^\r(X)$ is a right slice regular semigroup, then its restriction 
$\T|_{\clsxint{0,\infty}}$ is a strongly continuous semigroup.
\end{Lem}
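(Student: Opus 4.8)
The plan is to verify, one by one, the defining properties of a strongly continuous semigroup (Definition \ref{def semigroup}) for the restriction $\T|_{\clsxint{0,\infty}}$, exploiting that the positive half-line sits in the interior of $\Omega_\delta$. First I would record that, since $\delta>0$, every $t>0$ has $\arg(t)=0<\delta$, so $\opint{0,\infty}\subseteq\Omega_\delta$ and $\T|_{\clsxint{0,\infty}}$ is a well-defined $\Lin^\r(X)$-valued map with $\T(0)=\Id$ by hypothesis; likewise $\opint{0,\infty}$ is contained in the open subset $D_\delta$ of $\ci$ with $\Omega_{D_\delta}=\Omega_\delta$.

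For the semigroup law, fix $t,s>0$. Then $ts=st$ and $t+s\in\opint{0,\infty}\subseteq\Omega_\delta$, so the noncommutative semigroup law in the definition of a right slice regular semigroup gives $\T(t+s)=\T(t)\odot_t\T(s)$, where the slice product is taken in the first variable with $s$ frozen; that is, $\T(t+s)=(\T\odot c)(t)$, with $c$ the constant function of value $\T(s)\in\Lin^\r(X)$. By Example \ref{exa:elementary-2}, if $\T|_{\Omega_\delta}=\II_\r(\so{F})$ with $\so{F}=(\so{F}_1,\so{F}_2)$, then $(\T\odot c)(\phi_\j(z))=\so{F}_1(z)\T(s)+\so{F}_2(z)\T(s)\,\j$; evaluating at the real point $t$, where $\so{F}_2(t)=0$, we obtain $(\T\odot c)(t)=\so{F}_1(t)\T(s)=\T(t)\T(s)$ (composition of operators). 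Hence $\T(t+s)=\T(t)\T(s)$ for all $t,s>0$, so $\T|_{\clsxint{0,\infty}}$ is a semigroup.

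It remains to show $\T(\cdot)x\in C(\clsxint{0,\infty};X)$ for every $x\in X$. Right-continuity at $0$ is immediate: picking any $\delta'\in\opint{0,\delta}$, the set $\opint{0,\infty}$ is contained in $\Omega_{\delta'}$ and accumulates at $0$, so \eqref{strong cont in 0} yields $\lim_{t\to0^+}\T(t)x=x=\T(0)x$. Continuity on $\opint{0,\infty}$ follows from right slice regularity: since $\so{F}$ is holomorphic on $D_\delta\supseteq\opint{0,\infty}$, its component $\so{F}_1$ is continuous (indeed real analytic) there, and on the reals $\T(t)=\so{F}_1(t)$; thus $t\longmapsto\T(t)$ is continuous even in the operator norm of $\Lin^\r(X)$ on $\opint{0,\infty}$, and a fortiori $t\longmapsto\T(t)x$ is continuous there for every $x$. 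Combining with right-continuity at $0$ gives the claim. I do not expect a genuine obstacle; the only step needing care is the identification $\T(t)\odot_t\T(s)=\T(t)\T(s)$ at a real argument, which must be read off from the explicit description of the slice product with a constant operator in Example \ref{exa:elementary-2} together with the vanishing of the odd stem component on $\erre$, since at non-real arguments $\T(p)\odot_p\T(q)$ genuinely differs from the pointwise composition $\T(p)\T(q)$.
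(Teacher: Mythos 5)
Your proof is correct and follows essentially the same route as the paper's: the semigroup law is obtained exactly as in the paper by writing the slice product with the constant $\T(s)$ through the stem function and using that the odd component $\so{F}_2$ vanishes on $\erre$. The only cosmetic difference is that for continuity on $\opint{0,\infty}$ you invoke holomorphy of the stem function directly, while the paper simply cites the limit condition at $0$; both are valid.
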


\begin{proof}
Let $\so{T} = (\so{T}_1, \so{T}_2) : D_\delta \funzione \Lin^\r(X) \otimes_\erre \ci$ be such that $\T = \II_\r(\so{T})$. Since $\T$ is right slice we have that $\so{T_2}(s) = 0$ for every $s>0$, thus we get, if $\j \in \su_\alg$,
\begin{align}
  \T(t+s) 
    & = \T(t) \odot_t \T(s) = (\T \odot \T(s))(t) = (\T \odot \T(s))(\phi_\j(t)) \notag \\
    & = \so{T_1}(t)\T(s) +  \so{T_2}(t)\T(s) \j = \so{T_1}(t)\T(s) \notag \\
    & = (\so{T_1}(t) + \so{T_2}(t)\j) \T(s) = \T(t)\T(s). \notag
\end{align}
The continuity of $\T|_{\clsxint{0,\infty}}(\cdot)x$ for $x \in X$ follows from \eqref{strong cont in 0}. 
\end{proof}

Thanks to the latter lemma, we can give the following definition.

\begin{Def}
Give a right slice regular semigroup $\T : \Omega_\delta \cup \{0\} \funzione \Lin^\r(X)$, we say that an operator 
$\A:D(\A) \funzione X$ is the \emph{generator of $\T$} if it is the generator of the strongly continuous semigroup 
$\T|_{\clsxint{0,\infty}}$. We say also that \emph{$\A$ generates $\T$}.
\end{Def}


\section{Spherical sectorial operators and slice regular semigroups} \label{S:sector <-> regular}

Throughout this section, we will assume that 
\[
\text{\emph{$\alg$ satisfies \eqref{assumption on A}, \eqref{S_A nonvuota} and \eqref{eq:assumption-2bis}, and $X$ is a Banach two-sided $\alg$-module.}}
\]
The main result of this paper reads as follows.

\begin{Thm}\label{thm:main}
The following assertions hold.
\begin{itemize}
\item[$(\mr{a})$]
Let $\A : D(\A) \funzione X$ be a spherical $\delta$-sectorial operator with vertex $\omega$. Suppose that $D(\A)$ is dense in $X$ and there exists $K > 0$ such that
\begin{equation*}
\norma{\C_q(\A)}{} \le \frac{K}{|q - \omega|} \qquad \forall q \in \omega+\Omega_{\pi/2+\delta}.
\end{equation*}
  Then $\A$ generates a right slice regular semigroup $\T : \Omega_\delta \cup \{0\} \funzione \Lin^\r(X)$ such that 
\begin{equation*} 
\forall \delta' \in \opint{0,\delta} \quad \exists M_{\delta'} \in \clsxint{1,\infty} \quad : \quad 
 \norma{\T(q)}{} \le M_{\delta'} e^{\omega \re(q)} \qquad \forall q \in \Omega_{\delta'}.
\end{equation*}
\item[$(\mr{b})$]
Let $\T : \Omega_\delta \cup \{0\} \funzione \Lin^\r(X)$ be a right slice regular semigroup and let 
$\A:D(\A) \funzione \Lin^\r(X)$ be its generator. Suppose there exist $\delta' \in \cldxint{0,\delta}$, $M \in \clsxint{1,\infty}$ and $\omega \in \erre$ such that $\norma{\T(q)}{} \le Me^{\omega \re(q)}$ for every $q \in \Omega_{\delta'}$. Then $\A$ is a spherical $\eta$-sectorial operator with vertex $\omega$ for some $\eta \in \cldxint{0,\pi/2}$. Moreover, there exists $K > 0$ such that
\begin{equation*}
\norma{\C_q(\A)}{} \le \frac{K}{|q - \omega|} \qquad \forall q \in \omega+\Omega_{\pi/2+\eta}.
\end{equation*}
\end{itemize}
\end{Thm}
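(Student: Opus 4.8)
The plan is to prove $(\mr{a})$ by verifying that the operatorial function $\T := \T_0$ of Definition \ref{Def Tp}, completed by $\T(0) := \Id$, is the right slice regular semigroup we want, and to prove $(\mr{b})$ by reducing, through a slice rescaling, to the classical characterisation of sectorial generators applied in each complex slice $X_\j$. For $(\mr{a})$, right slice regularity of $\T|_{\Omega_\delta}$ and the bound $\norma{\T(q)}{}\le M_{\delta'}e^{\omega\re(q)}$ are immediate from Lemma \ref{T ind. of k} (take $p=0$ in \eqref{uniform estimate for T(q)}). Since $\exp^\alpha(t) = e^{\alpha t}$ for $t\in\erre$ (Lemma \ref{L:the exponential exp_c(q)}(ii)), for real $t$ the integral in Definition \ref{Def Tp} is the one of \eqref{T(t), t > 0}, so $\T|_{\clsxint{0,\infty}}$ coincides with the strongly continuous semigroup of Theorem \ref{main thm AMS}; hence $\T(0) = \Id$ and $\A$ generates $\T$. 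Strong continuity at $0$ on each $\Omega_{\delta'}$ I would obtain slice by slice: $\T\circ\phi_\j$ is holomorphic on $D_\delta$ (Proposition \ref{f regular iff f slice holomorphic}), bounded near $0$ on $D_{\delta'}$, and equal to the $C_0$-semigroup $t\longmapsto\T(t)$ on $\opint{0,\infty}$, hence strongly continuous at $0$ there by the classical argument; the bounds being uniform in $\j$ and $\su_\alg$ compact, one gets $\T(q)x\to x$ as $\Omega_{\delta'}\ni q\to 0$.

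The substantial point of $(\mr{a})$ is the noncommutative semigroup law. Given $p,q\in\Omega_\delta$ with $p+q\in\Omega_\delta$ and $pq = qp$, Lemma \ref{L:the exponential exp_c(q)}(i) gives $\exp^\alpha_p(q) = \exp^\alpha(p+q)$ for all $\alpha$, so the integral defining $\T_p(q)$ equals that defining $\T_0(p+q)$, whence $\T(p+q) = \T_p(q)$; it remains to show $\T_p(q) = \T(p)\odot_p\T(q)$. I would first treat the case $p = t\in\opint{0,\infty}$: then $\T(t)\odot_t\T(q) = \T(t)\T(q)$ (the stem function of $\T$ vanishes in its second component at real points, as in the proof of Lemma \ref{analytic T -> strongly cont T}), while $q\longmapsto\T(t)\T(q)$ (Example \ref{exa:elementary}(b)) and $q\longmapsto\T(t+q) = \T_t(q)$ (Lemma \ref{T ind. of k}) are right slice regular and agree for real $q$ (genuine semigroup law on $\clsxint{0,\infty}$), hence on $\Omega_\delta$ by Lemma \ref{slice analytic continuation}; so $\T(t+q) = \T(t)\T(q) = \T(t)\odot_t\T(q)$. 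For the general case the key observation is that $pq = qp$ together with $p+q\in Q_\alg$ forces $p$ and $q$ onto a common slice $\ci_{\mathbf h}$: this is clear if one of $p,q$ is real or if $p+q\in\erre$, and otherwise, writing $\im(p) = p_1\mathbf{k}$, $\im(q) = q_1\mathbf h$ with $p_1q_1\ne 0$, the relation $pq = qp$ gives $\mathbf{k}\mathbf h = \mathbf h\mathbf{k}$ while $p+q\in Q_\alg$ forces $(p+q)(p+q)^c = (p_0+q_0)^2 + p_1^2 + q_1^2 - 2p_1q_1\mathbf{k}\mathbf h\in\erre$ (Proposition \ref{useful properties}(d)), so $\mathbf{k}\mathbf h\in\erre$, and then $(\mathbf{k}\mathbf h)^2 = -\mathbf{k}^2 = 1$ gives $\mathbf h = \pm\mathbf{k}$. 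Writing then $p = \phi_{\mathbf h}(z_p)$, $q = \phi_{\mathbf h}(z_q)$ with $z_p, z_q\in D_\delta$, the set $\Lambda := D_\delta\cap(D_\delta - z_q)$ is convex (as $\delta\le\pi/2$) and contains both a real interval $\opint{0,\eps}$ and $z_p$; on $\Lambda$ the holomorphic maps $w\longmapsto\T(\phi_{\mathbf h}(w))\odot\T(q)$ and $w\longmapsto\T_0(\phi_{\mathbf h}(w+z_q)) = \T(\phi_{\mathbf h}(w)+q)$ coincide on $\opint{0,\eps}$ by the previous case, hence on all of $\Lambda$; evaluating at $z_p$ yields $\T(p)\odot_p\T(q) = \T(p+q)$, completing $(\mr{a})$.

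For $(\mr{b})$ I would first reduce to $\omega = 0$: since $\exp^{-\omega}$ is right slice regular on $Q_\alg$ (Lemma \ref{L:the exponential exp_c(q)}(ii)) with $\exp^{-\omega}(q)\in\ci_\j$, $|\exp^{-\omega}(q)| = e^{-\omega\re(q)}$ for $q\in\ci_\j$, and real stem components, the slice product $\exp^{-\omega}\odot\T$ agrees with the pointwise product, so $\tilde\T(q):=\exp^{-\omega}(q)\T(q)$ is a right slice regular semigroup with $\tilde\T|_{\clsxint{0,\infty}}(t) = e^{-\omega t}\T(t)$ — hence with generator $\A-\omega\Id$ — and $\norma{\tilde\T(q)}{}\le M$ on $\Omega_{\delta'}$; Lemma \ref{A <-> A - w} then lets me work with $\A-\omega\Id$. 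Assuming $\omega = 0$ and $\norma{\T(q)}{}\le M$ on $\Omega_{\delta'}$ with $\delta' > 0$, fix $\j\in\su_\alg$: since $z\longmapsto\T_0(\phi_\j(z))$ is holomorphic on $D_\delta$ with values in $\Lin^\r(X)$ (Proposition \ref{f regular iff f slice holomorphic}) and bounded by $M$ on $D_{\delta'}$, its restriction to $\opint{0,\infty}$ is differentiable, so $\T(t)X\subseteq D(\A)$ with $\A\T(t) = (\T_0\circ\phi_\j)'(t)$, and Cauchy's estimate on the disc $\{|z-t| < t\sin\delta'\}\subseteq D_{\delta'}$ gives $\norma{\A\T(t)}{}\le M/(t\sin\delta')$ for every $t > 0$. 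Thus the $C_0$-semigroup $t\longmapsto\T(t)$ on $X_\j$, with generator $\A_\j$, satisfies $\sup_{t>0}t\,\norma{\A_\j\T(t)}{} < \infty$, so $\A_\j$ is sectorial (cf., e.g., \cite[Chapter~II, Theorem~4.6]{EngNag00}): there are $\eta\in\opint{0,\pi/2}$ and $C > 0$ depending only on $M$ and $\delta'$ (hence not on $\j$) with $D_{\pi/2+\eta}\subseteq\rho(\A_\j)$ and $\norma{\R_z(\A_\j)}{}\le C/|z|$ there. Since $D_{\pi/2+\eta}$ is conjugation-invariant, Theorem \ref{legame risolventi}(ii) gives $\Omega_{\pi/2+\eta}\subseteq\rho_\s(\A)$, so $\A$ is spherical $\eta$-sectorial with vertex $0$; and for $q = \phi_{\mathbf k}(\lambda)\in\Omega_{\pi/2+\eta}$, the Remark after Theorem \ref{legame risolventi} gives $\C_q(\A)x = \R_\lambda(\A_{\mathbf k})x + \R_{\overline\lambda}(\A_{\mathbf k})\R_\lambda(\A_{\mathbf k})(q^cx - xq^c)$, so using $|\lambda| = |q|$, $\norma{q^cx - xq^c}{}\le 2|q|\,\norma{x}{}$ (Lemma \ref{norma moltiplicativa}) and the uniform sectorial estimate I get $\norma{\C_q(\A)}{}\le C(1+2C)/|q|$; undoing the reduction (Lemma \ref{A <-> A - w}) gives the claim for the original $\A$ on $\omega+\Omega_{\pi/2+\eta}$.

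The hard part is the noncommutative semigroup law in $(\mr{a})$: the crux is the reduction (via Proposition \ref{useful properties}) of a commuting pair with sum in $Q_\alg$ to a single slice, which then makes the one–complex-variable identity principle available, and throughout one must be careful because $q\longmapsto\F(q)x$ need not be right slice regular even when $q\longmapsto\F(q)$ is. In $(\mr{b})$ the subtle point is that $z\longmapsto\T_0(\phi_\j(z))$ is holomorphic for the complex structure of $(\Lin^\r(X))_\j$ rather than that of $\Lin(X_\j)$ — which is nonetheless enough for the Cauchy estimate on $\norma{\A\T(t)}{}$ since the norm is unchanged — together with keeping the classical sectorial constants uniform over the slices.
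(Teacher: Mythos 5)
Your proposal is correct, but on both halves it takes a route that is genuinely different from the paper's. For part $(\mr{a})$, the paper establishes the stronger identity $\T_p(q)=(\T\odot\T(p))(q)$ for \emph{all} $p,q\in\Omega_\delta$ by computing the stem function of $\T\odot\T(p)$ directly from the contour integral and then applying Lemma \ref{slice analytic continuation} in the variable $q$; the commutation hypothesis enters only through $\T_p(q)=\T(p+q)$. You instead prove only the commuting case, via the algebraic observation that $pq=qp$ together with $p+q\in Q_\alg$ forces $p,q$ into a common slice $\ci_{\mathbf h}$ (your computation of $(p+q)(p+q)^c$ is correct, and this reduction is not in the paper), after which the one-variable identity principle on the convex set $D_\delta\cap(D_\delta-z_q)$ finishes the job. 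Your route is more elementary but yields less: the paper's version of the law, $\T_p=\T\odot\T(p)$ without commutativity, is part of Theorem \ref{A sph sect -> T slic reg}. For part $(\mr{b})$, the paper first proves the half-plane estimate $\norma{\C_q(\A)}{}\le K/|q-\omega|$ on $\omega+\Omega_{\pi/2}$ via the noncommutative Laplace transform (Theorem \ref{re q > 0 in rho(A)}) and a contour shift, and only then feeds the resulting bound $\norma{\C_q(\A)}{}\le L/|\im(q)|$ into Theorem \ref{slice regularity+bddness -> sph. sectoriality}, which passes to $\R_\mu(\B_\j)$ through an estimate on $\Q_p(\B)$. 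You bypass the Laplace transform entirely: a Cauchy estimate on $z\longmapsto\T(\phi_\j(z))$ gives $\sup_{t>0}t\norma{\A\T(t)}{}<\infty$, the classical characterization of bounded analytic semigroups applies to each $\A_\j$ with constants uniform in $\j$, and Theorem \ref{legame risolventi} plus the Remark following it transfer the sector and the estimate back to $\C_q(\A)$. This is shorter and keeps the dependence of the constants transparent; the paper's detour buys the intermediate estimates of independent interest. One detail to fix: in your reduction to $\omega=0$, the slice product $\exp^{-\omega}\odot\T$ does \emph{not} coincide with the pointwise product $\exp^{-\omega}(q)\T(q)$ (scalar acting on the left), because $\j\so{T}_m(z)\neq\so{T}_m(z)\j$ in general; it coincides with $x\longmapsto\T(q)(e^{-\omega q}x)$. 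This is harmless — all you use is that $\exp^{-\omega}\odot\T$ is right slice regular, restricts to $e^{-\omega t}\T(t)$ on $\opint{0,\infty}$, and is bounded on $\Omega_{\delta'}$ (by $4M$, say, via \eqref{repr. formula 1}, rather than by $M$) — but the claim as written is inaccurate.
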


Notice that bounded right slice semigroups are exactly those generated by spherical sectorial operators with 
vertex $\omega = 0$. We will give the proof of this result in the following two subsections.


\subsection{From spherical sectoriality to slice regularity}\label{S:sector -> regular}

Let us show that a spherical sectorial operator with vertex $\omega$ generates an exponentially bounded right slice regular semigroup.

\begin{Thm}\label{A sph sect -> T slic reg}
Let $\A : D(\A) \funzione X$ be a spherical $\delta$-sectorial operator with vertex $\omega$. Suppose that $D(\A)$ is dense in $X$ and there exists $K > 0$ such that
\begin{equation*}
\norma{\C_q(\A)}{} \le \frac{K}{|q - \omega|} \qquad \forall q \in \omega+\Omega_{\pi/2+\delta}.
\end{equation*}
Let $\T_{p} : \Omega_\delta \funzione \Lin^\r(X)$ be as in Definition \ref{Def Tp}. Then $\T_p$ is right slice regular for every $p \in \Omega_\delta \cup \{0\}$ and
\begin{equation*}
  \T_p(q) = \T(q) \odot_q \T(p) \qquad \forall p, q \in \Omega_\delta.
\end{equation*}

The function $\T = \T_0$ is the unique right slice regular function from $\Omega_\delta$ to $\Lin^\r(X)$ which coincides with the semigroup generated by $\A$ on $\opint{0,\infty}$. Moreover, it holds
\begin{equation}\label{semigroup property for T(p)}
  \T(p+q) = \T(p) \odot_p \T(q) = \T(q) \odot_q \T(p) \quad \text{$\forall p,q \in \Omega_\delta$ with $p+q \in \Omega_\delta$ and $pq = qp$}, \notag 
\end{equation}
i.e.
\begin{equation}
  \T(p+q) = (\T \odot \T(q))(p) = (\T \odot \T(p))(q) \quad \text{$\forall p,q \in \Omega_\delta$ with $p+q \in \Omega_\delta$ and $pq = qp$}, \notag
\end{equation}
where in the second term $\T(q)$ is the constant function $p \longmapsto \T(q)$ and in the last term $\T(p)$ is the constant function $q \longmapsto \T(p)$.
\end{Thm}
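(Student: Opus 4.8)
The plan is to establish the four assertions in turn --- slice regularity of $\T_p$, uniqueness of $\T$, the identity $\T_p(q)=\T(q)\odot_q\T(p)$, and the noncommutative semigroup law --- using Lemma~\ref{T ind. of k}, Lemma~\ref{L:the exponential exp_c(q)} and the slice identity principle Lemma~\ref{slice analytic continuation}.

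\textit{Slice regularity and uniqueness.} For $p\in\Omega_\delta$ the right slice regularity of $\T_p$ is part of Lemma~\ref{T ind. of k}, and the argument there (Proposition~\ref{slice regularity of integral functions} applied to the integrand, using that $q\longmapsto\exp^\alpha(q)$ is right slice regular) covers $p=0$ verbatim, so $\T=\T_0$ is right slice regular on $\Omega_\delta$. For real $t>0$, Lemma~\ref{L:the exponential exp_c(q)}(ii) gives $\exp^\alpha(t)=e^{t\alpha}$, hence $\T(t)=\T_0(t)$ equals the integral \eqref{T(t), t > 0}, i.e. the value at $t$ of the semigroup generated by $\A$ in Theorem~\ref{main thm AMS}. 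Since $D_\delta$ is open, connected and conjugation invariant and $\Omega_\delta\cap\erre=\opint{0,\infty}$, Lemma~\ref{slice analytic continuation} forces any right slice regular extension of that semigroup to $\Omega_\delta$ to coincide with $\T$, which gives the uniqueness claim.

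\textit{The identity $\T_p(q)=\T(q)\odot_q\T(p)$.} With $p$ fixed, $q\longmapsto\T(q)\odot_q\T(p)=(\T\odot\T(p))(q)$ is the slice product of the right slice regular $\T$ with a constant, hence right slice regular in $q$; as a function of $p$, however, it is not right slice regular (right multiplication destroys slice regularity, cf. Example~\ref{exa:elementary-2}), so the identity principle has to be used twice. First fix $s\in\opint{0,\infty}$: both $p\longmapsto\T_p(s)$ (Lemma~\ref{T ind. of k}) and $p\longmapsto\T(s)\,\T(p)$ (right slice regular by Example~\ref{exa:elementary}(b)) are right slice regular on $\Omega_\delta$, and for real $p>0$ one has $\T_p(s)=\T_0(p+s)=\T(p+s)=\T(s)\T(p)$ --- the first equality because $\exp^\alpha_p(s)=\exp^\alpha(p+s)$ (Lemma~\ref{L:the exponential exp_c(q)}(i)), the last by the classical semigroup law on $\opint{0,\infty}$; hence Lemma~\ref{slice analytic continuation} yields $\T_p(s)=\T(s)\T(p)$ for all $p\in\Omega_\delta$ and all $s>0$. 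Next fix $p\in\Omega_\delta$: both $q\longmapsto\T_p(q)$ and $q\longmapsto\T(q)\odot_q\T(p)$ are right slice regular in $q$, and at a real $s>0$ they both equal $\T(s)\,\T(p)$ (the former by the previous step, the latter because slice composition with a constant reduces to the ordinary product at real arguments, Example~\ref{exa:elementary-2}). Lemma~\ref{slice analytic continuation} now gives $\T_p(q)=\T(q)\odot_q\T(p)$ for all $p,q\in\Omega_\delta$.

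\textit{The noncommutative semigroup law.} Let $p,q\in\Omega_\delta$ with $pq=qp$ and $p+q\in\Omega_\delta\subseteq Q_\alg$. By Lemma~\ref{L:the exponential exp_c(q)}(i), $\exp^\alpha_p(q)=\exp^\alpha(p+q)$ for every $\alpha$ on the path $\omega+\Gamma(\j\,;r;\eta)$; substituting this into the integral of Definition~\ref{Def Tp} gives $\T_p(q)=\T_0(p+q)=\T(p+q)$, and interchanging $p$ and $q$ gives $\T_q(p)=\T(p+q)$ as well. Combining with the identity just proved and its $(p,q)$-swapped version, $\T(p+q)=\T_p(q)=\T(q)\odot_q\T(p)$ and $\T(p+q)=\T_q(p)=\T(p)\odot_p\T(q)$, which is the assertion. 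The principal obstacle is precisely that $q\longmapsto\T(q)\odot_q\T(p)$ fails to be right slice regular in $p$, so the comparison with $\T_p$ cannot be carried out by a single continuation and requires the two-stage identity-principle argument above.
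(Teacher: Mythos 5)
Your proof is correct and follows essentially the same route as the paper: reduce to real arguments via Theorem~\ref{main thm AMS} and Lemma~\ref{L:the exponential exp_c(q)}, then apply the identity principle (Lemma~\ref{slice analytic continuation}) in two stages --- first in $p$ with $s>0$ fixed to get $\T_p(s)=\T(s)\T(p)$, then in $q$ with $p$ fixed to identify $\T_p$ with $\T\odot\T(p)$ --- and finally use $\exp^\alpha_p(q)=\exp^\alpha(p+q)$ in the commuting case. The only cosmetic difference is that the paper verifies $(\T\odot\T(p))(t)=\T(t)\T(p)$ by an explicit stem-function computation, whereas you invoke the vanishing of the second stem component at real points via Example~\ref{exa:elementary-2}; both are valid.
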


\begin{proof}
We already proved in Lemma \ref{T ind. of k} that $\T_p$ is a right slice regular function. Let us explicitly write its stem function in the particular case $p = 0$. Given $\alpha \in \alg$, we define $E^\alpha=(E^\alpha_1,E^\alpha_2):D_\delta \funzione \Lin^\r(X) \otimes_\erre \ci$ by setting $E^\alpha_1(z):=\sum_{n \geq 0}\frac{\alpha^n}{n!}\Re(z^n)$ and $E^\alpha_2(z):=\sum_{n \geq 0}\frac{\alpha^n}{n!}\Im(z^n)$. It follows immediately that 
$\exp_0^\alpha = \exp^\alpha = \II_\r(E^\alpha)$. Then, from Proposition \ref{slice regularity of integral functions} and Example \ref{exa:elementary}(b), we obtain that $\T = \T_0 = \II_\r(\so{T})$, where $\so{T} = (\so{T}_1, \so{T}_2)$ and
\begin{equation}
  \so{T}_m(z) := \frac{1}{2\pi} \int_{\omega+\Gamma(\j \, ;r,\eta)} \left(\C_\alpha(\A) \, \j^{-1} \de\alpha \ \!E^\alpha_m(z)\right) \qquad \forall z \in D_\delta, \quad  m = 1, 2 \notag
\end{equation}
for some fixed $\j \in \su_\alg$, $r \opint{0,\infty}$ and $\eta \in \opint{\pi/2,\pi/2+\delta}$. Thanks to Lemma 
\ref{L:the exponential exp_c(q)}(ii), we have
\[
  \T(t) = \frac{1}{2\pi} \int_{\omega+\Gamma(\j \, ;r;\eta)} \C_\alpha(\A) \, \j^{-1} e^{\alpha t} \de\alpha \qquad \forall t > 0,
\]
therefore from Theorem \ref{main thm AMS} 
it follows that
\begin{equation}\label{T(t) semigruppo}
  \T(t + s) = \T(t)\T(s) = \T(s)\T(t) \qquad \forall t, s > 0.
\end{equation}
Now for any $t > 0$ let us consider the mappings
\begin{equation} 
  \T_{t} = \T(t + \cdot) : \Omega_\delta \funzione \Lin^\r(X) : p \longmapsto \T(t+p) \notag
\end{equation}
(cf. formula \eqref{exp_q(p) if pq=qp}) and
\begin{equation}
  \U_{t} := \T(t)\T(\cdot) : \Omega_\delta \funzione \Lin^\r(X): p \longmapsto \T(t)\T(p). \notag
\end{equation}
Clearly $\U_t$ is right slice regular (cf. Example \ref{exa:elementary}(b)) and from \eqref{T(t) semigruppo} it follows that 
$\T_t(s) = \U_t(s)$ for every $s > 0$. Therefore, since $\T_t$ is right slice regular 
by Lemma \ref{T ind. of k}, we obtain that $\T_t = \U_t$, hence
\begin{equation*}\label{T(p+t) = T(t) T(p)}
  \T(p+t) = \T(t) \T(p) \qquad \forall p \in \Omega_\delta, \ \forall t > 0.
\end{equation*}
Now we fix $p$ and consider the slice right regular function $\T_p : \Omega_\delta \funzione \Lin^\r(X)$. If 
$p,q \in \Omega_\delta$ with $p+q \in \Omega_\delta$ and $pq = qp$, then Lemma \ref{L:the exponential exp_c(q)}(i) implies that
\begin{align}
  \T_p(q) 
    & = \frac{1}{2\pi} \int_{\omega+\Gamma(\j \, ;r;\eta)} \left(\C_\alpha(\A) \, \j^{-1} \de\alpha\ \! \exp^\alpha_p(q)\right) \notag \\
    & = \frac{1}{2\pi} \int_{\omega+\Gamma(\j \, ;r;\eta)} \left(\C_\alpha(\A) \, \j^{-1} \de\alpha\ \! \exp^\alpha(p+q) \right)
    = \T(p+q), \label{p+q}
\end{align}
in particular $\T_p(t) = \T(p + t)$ for every $t > 0$. Thus from \eqref{T(p+t) = T(t) T(p)} we obtain
\begin{equation*}
  \T_p(t) = \T(t)\T(p) \qquad \forall p \in \Omega_\delta, \ \forall t > 0.
\end{equation*}
For every $p \in \Omega_\delta$, define the right slice regular function $\V_p : \Omega_\delta \funzione \Lin^\r(X)$ by setting
\begin{equation*}
  \V_p := \T \odot \T(p)
\end{equation*}
(according to our notation $\T(p)$ is here the constant function $q \longmapsto \T(p)$). If $z \in D_\delta$ and 
$q:=\phi_\j(z)$, then we have
\begin{align}
  E^\alpha_1(z)  \T(p) + E^\alpha_2(z)  \T(p) \j
    & = \sum_{n \ge 0}\frac{\alpha^n}{n!} \Re(z^n) \T(p) + \sum_{n \ge 0}\frac{\alpha^n}{n!} \Im(z^n)  \T(p) \j \notag \\
    & = \sum_{n \ge 0}\frac{\alpha^n}{n!} \T(p)  \Re(z^n) + \sum_{n \ge 0}\frac{\alpha^n}{n!}   \T(p) \Im(z^n)  \j \notag \\
    & = \sum_{n \ge 0}\frac{\alpha^n}{n!} \T(p)  (\Re(z^n) + \Im(z^n)  \j) 
       = \sum_{n \ge 0}\frac{\alpha^n}{n!} \T(p) q^n, \notag
\end{align}
hence
\begin{align}
  (\T \odot \T(p))(q) 
    & = \so{T}_1(z) \T(p)  + \so{T}_2(z) \T(p)\j \notag \\
    & = \frac{1}{2\pi} \int_{\omega+\Gamma(\j \, ;r;\eta)} \left(\C_\alpha(\A) \, \j^{-1} \de\alpha \ \!E^\alpha_1(z) \T(p)\right) \notag \\
    & \sp + \frac{1}{2\pi} \int_{\omega+\Gamma(\j \, ;r;\eta)} \C_\alpha(\A) \, \j^{-1} \de\alpha \ \!E^\alpha_2(z)  \T(p) \j \notag \\
    & = \frac{1}{2\pi} 
           \int_{\omega+\Gamma(\j \, ;r;\eta)} 
              \left(\C_\alpha(\A) \, \j^{-1} \de\alpha\  \!(E^\alpha_1(z)  \T(p) + E^\alpha_2(z)  \T(p)\j)\right)
           \notag \\
    & = \frac{1}{2\pi} \int_{\omega+\Gamma(\j \, ;r;\eta)} \Big(\C_\alpha(\A) \, \j^{-1} \de\alpha \ \sum_{n \ge 0}\frac{\alpha^n}{n!}
          \T(p)q^n \Big) .\notag      
\end{align}
Therefore if $t > 0$ we get
\begin{align}
  (\T \odot \T(p))(t)
    & = \frac{1}{2\pi}
      \int_{\omega+\Gamma(\j \, ;r;\eta)} \Big(\C_\alpha(\A) \, \j^{-1} \de\alpha \ \!\sum_{n \ge 0}\frac{\alpha^n}{n!} \T(p) t^n \Big)
      \notag \\
    & = \frac{1}{2\pi} 
       \int_{\omega+\Gamma(\j \, ;r;\eta)}  
         \Big( \C_\alpha(\A) \, \j^{-1} \de\alpha\ \! \sum_{n \ge 0}\frac{\alpha^n}{n!}t^n \T(p) \Big) \notag \\
    & = \frac{1}{2\pi} 
       \int_{\omega+\Gamma(\j \, ;r;\eta)}  \left( \C_\alpha(\A) \, \j^{-1} \de\alpha\ \! \exp^\alpha(t)\right) \T(p) \notag \\   
        & =    \T(t) \T(p), \notag
\end{align}
thus
\begin{equation*}
    \V_p(t) = \T_p(t)  \qquad \forall t > 0.
\end{equation*}
Since $\V_p$ and $\T_p$ are both right slice regular, Lemma \ref{slice analytic continuation} yields
\begin{equation*}
  \T_p(q) = (\T \odot \T(p))(q) \qquad \forall p, q \in \Omega_\delta
\end{equation*}
and, by virtue of \eqref{p+q},
\begin{equation*}
 \T(p+q) = (\T \odot \T(p))(q) \qquad \text{$\forall p,q \in \Omega_\delta$ with $p+q \in \Omega_\delta$ and $pq = qp$}.
\end{equation*}
This completes the proof.
\end{proof}


\subsection{From slice regularity to spherical sectoriality}\label{S:regular -> sector}

Our final task is proving that a right slice regular semigroup is generated by a spherical sectorial operator.
We need the following lemma providing the estimate \eqref{stima risolvente analitico}
for the spherical resolvent operator of the generator of a semigroup. As in the classical case, the Laplace transform \eqref{Res = Lap} is a crucial tool for this proof, but in our noncommutative framework things are complicated by the fact that $q \longmapsto (\T(t)e^{-tq})x$ is not right slice regular even if $q \longmapsto \T(t)e^{-tq}$ is (cf. Remark 
\ref{eq:no-right-slice}).

\begin{Lem}
Let $\delta \in \opint{0,\pi/2}$ and let $\T : \Omega_\delta \cup \{0\} \funzione \Lin^\r(X)$ be a function such that 
$\T|_{\Omega_\delta}$ is right slice regular and $\T|_{\clsxint{0,\infty}}$ is a strongly continuous semigroup. Suppose there exist $\delta' \in \cldxint{0,\delta}$, $M \in \clsxint{1,\infty}$ and $\omega \in \erre$ such that 
$\norma{\T(q)}{} \le Me^{\omega \re(q)}$ for every $q \in \Omega_{\delta'}$. If $\A$ is the generator of $\T$, then 
$\rho_\s(\A) \subseteq \omega+\Omega_{\pi/2}$ and there exists $K > 0$ such that 
\begin{equation}\label{stima risolvente analitico}
  \norma{\C_q(\A)}{} \le \frac{K}{|q - \omega|} \qquad \forall q \in \omega + \Omega_{\pi/2}.
\end{equation}
\end{Lem}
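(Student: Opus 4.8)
The plan is to follow the classical proof for analytic semigroups — represent the spherical resolvent as a Laplace transform of the semigroup and then rotate the ray of integration into the sector of slice regularity — but to carry out the rotation at the level of the \emph{operator}-valued integrand, since after evaluation at a vector it is no longer slice regular (Remark~\ref{eq:no-right-slice}). First I would collect what is already available: as $\T|_{\clsxint{0,\infty}}$ is a strongly continuous semigroup with $\norma{\T(t)}{}\le Me^{\omega t}$ for $t\ge0$, Theorem~\ref{re q > 0 in rho(A)} gives $\omega+\Omega_{\pi/2}\subseteq\rho_\s(\A)$, the estimate~\eqref{stima risolvente con re(q)} with $n=1$, and $\C_q(\A)x=\int_0^\infty(\T(t)e^{-tq})x\,\de t$ for all $q\in\omega+\Omega_{\pi/2}$ and $x\in X$. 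When $\arg(q-\omega)\le\pi/2-\delta'/2$ one has $\re(q)-\omega=|q-\omega|\cos(\arg(q-\omega))\ge|q-\omega|\sin(\delta'/2)$, so~\eqref{stima risolvente con re(q)} already yields $\norma{\C_q(\A)}{}\le\frac{M}{\sin(\delta'/2)}\,|q-\omega|^{-1}$; it therefore remains only to estimate $\C_q(\A)$ for $q$ with $\pi/2-\delta'/2<\arg(q-\omega)<\pi/2$. Fix such a $q$, choose $\j\in\su_\alg$ with $q\in\ci_\j$, and write $q-\omega=\phi_\j(\varrho e^{i\theta})$ with $\varrho:=|q-\omega|$ and $\theta:=\arg(q-\omega)\in\opint{\pi/2-\delta'/2,\pi/2}$ (the case $\theta<0$ is symmetric, with the opposite sign of the rotation below).

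Next I would extend the integrand to a right slice regular operator-valued function in the time variable: with $\mathsf q\in\Lin^\r(X)$ the left multiplication $\xi\longmapsto q\xi$, set $f:=\T\odot\exp^{-\mathsf q}:\Omega_\delta\funzione\Lin^\r(X)$, a slice product of two right slice regular functions and hence itself right slice regular. For $\lambda$ in the \emph{same} slice $\ci_\j$ as $q$ one verifies that $f(\lambda)=\T(\lambda)\circ\mathsf m_\lambda$, where $\mathsf m_\lambda\in\Lin^\r(X)$ is left multiplication by $e^{-\lambda q}$; this identity is the one slightly delicate point and depends on $\lambda$ and $q$ commuting inside $\ci_\j$ (so that $e^{-\lambda q}\in\ci_\j$ commutes with $\j$). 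Consequently $f(t)x=(\T(t)e^{-tq})x$ for $t>0$, and $\norma{f(\lambda)}{}\le\norma{\T(\lambda)}{}\,|e^{-\lambda q}|$ for $\lambda\in\ci_\j$; since $\norma{f(t)}{}\le Me^{\omega t}e^{-t\re(q)}$ with $\re(q)>\omega$, the Bochner integral $\int_0^\infty f(t)\,\de t$ converges in $\Lin^\r(X)$ and, by the previous paragraph, equals $\C_q(\A)$.

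Finally I would rotate the contour. Restricting $f$ to $\ci_\j$, Proposition~\ref{f regular iff f slice holomorphic} shows that $f\circ\phi_\j:D_\delta\funzione(\Lin^\r(X))_\j$ is holomorphic with values in a complex Banach space, so the classical Cauchy integral theorem holds on the simply connected sector $D_\delta$. Deforming $\clsxint{0,\infty}$ to the ray $e^{\psi\j}\clsxint{0,\infty}\subset\Omega_{\delta'}$ with $\psi:=-\delta'/2$ and closing up by circular arcs of radii $\varepsilon$ and $R$, on these arcs and on the rotated ray one has $\norma{f(se^{\phi\j})}{}\le Me^{\omega s\cos\phi}\,|e^{-se^{\phi\j}q}|=Me^{-s\varrho\cos(\phi+\theta)}$ with $\phi+\theta\in\opint{\pi/2-\delta',\pi/2}$, so $\cos(\phi+\theta)>0$ (at least $\cos\theta$ on the arcs, at least $\sin(\delta'/2)$ on the rotated ray); hence the arc contributions tend to $0$ as $\varepsilon\to0$ and $R\to\infty$, giving $\C_q(\A)=\int_0^\infty f(se^{\psi\j})\,e^{\psi\j}\,\de s$. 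Taking norms (and $|e^{\psi\j}|=1$) yields
\[
  \norma{\C_q(\A)}{}\le\int_0^\infty Me^{-s\varrho\cos(\psi+\theta)}\,\de s=\frac{M}{\varrho\cos(\psi+\theta)}\le\frac{M}{\sin(\delta'/2)}\,\frac1{|q-\omega|},
\]
so the lemma holds with $K:=M/\sin(\delta'/2)$ (here $\delta'>0$ is used), which together with the estimate of the first paragraph covers all of $\omega+\Omega_{\pi/2}$.

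The one real difficulty, as flagged before the statement, is that one cannot deform $t\mapsto(\T(t)e^{-tq})x$ directly because this $X$-valued function is not slice regular in $t$ even though $t\mapsto\T(t)e^{-tq}$ is; the remedy is to keep everything operator-valued, where the integrand is the genuinely right slice regular $f=\T\odot\exp^{-\mathsf q}$, which upon restriction to the slice of $q$ becomes an ordinary holomorphic map into the complex Banach space $(\Lin^\r(X))_\j$, to which the Cauchy theorem applies verbatim. Besides this reorganisation, the only point needing care is the pointwise identification $f|_{\ci_\j}(\lambda)=\T(\lambda)\circ(\text{left multiplication by }e^{-\lambda q})$, which rests on the commutativity inside $\ci_\j$; after that the arc estimates and the resulting bound are the routine ones of complex analytic-semigroup theory.
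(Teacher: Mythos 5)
Your proof is correct and follows essentially the same route as the paper's: represent $\C_q(\A)$ as the Laplace transform of $\T$ via Theorem \ref{re q > 0 in rho(A)}, keep the integrand \emph{operator}-valued so that its restriction to the slice $\ci_\j$ containing $q$ is holomorphic into $(\Lin^\r(X))_\j$, and rotate the ray of integration into the subsector where $\T$ is exponentially bounded. The only (cosmetic) differences are that the paper writes the integrand directly as $z \longmapsto e^{-z\lambda}\T_\j(z)$ and handles the origin by splitting off $\int_0^\eps$ as a strong integral before deforming, whereas you package it as the slice product $\T \odot \exp^{-\mathsf{q}}$ and use a shrinking arc at $0$, and you obtain $K/|q-\omega|$ directly from the choice of rotation angle rather than via the intermediate bound $C/|\im(q)|$.
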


\begin{proof}
Thanks to Theorem \ref{re q > 0 in rho(A)}, we know that $\rho_\s(\A) \subseteq \omega+\Omega_{\pi/2}$ and \eqref{stima risolvente analitico} is true for $q \in \opint{\omega,\infty}$. Fix $q \in (\omega + \Omega_{\pi/2}) \setmeno \erre$. Let $\j \in \su_\alg$ and $r, s \in \erre$ be such that $s > 0$ and $q = r+s\j$. Define $\lambda \in D_\delta$ by $\lambda:=r+is$, thus $q=\phi_\j(\lambda)$. The function $\T_\j : D_\eta \funzione (\Lin^\r(X))_\j$ is holomorphic by Proposition \ref{f regular iff f slice holomorphic}, hence $\T|_{\clsxint{\eps,\infty}} \in C(\clsxint{\eps,\infty};\Lin^\r(X))$ for every $\eps > 0$. Using again Theorem \ref{re q > 0 in rho(A)}, we infer that
\begin{align}\label{eq 3}
  \C_q(\A)x 
     & = \int_0^\eps (\T(t)e^{-tq})x \de t  + \int_\eps^\infty (\T(t)e^{-tq})x \de t\notag \\
     & = \int_0^\eps (\T(t)e^{-tq})x \de t + \left(\int_\eps^\infty \T(t)e^{-tq} \de t\right) x \notag \\
     & = \int_0^\eps (\T(t)e^{-tq})x \de t +  \left(\int_\eps^\infty e^{-t \lambda}\T_\j(t) \de t\right) x.
\end{align}
The mapping $z \longmapsto e^{-z\lambda}\T_\j(z)$ is holomorphic from $D_\delta$ into $(\Lin^\r(X))_\j$, hence a standard argument allows us to change the path of integration in the last integral in \eqref{eq 3} from 
$\clsxint{\eps, \infty}$ into $\gamma_\eps : \clsxint{0,\infty} \funzione \ci : \rho \longmapsto \eps + \rho e^{\theta i}$ where $\theta$ is a fixed element of $\opint{-\delta',0}$. Therefore we get
\begin{align}
 \int_\eps^\infty e^{-t \lambda}\T_\j(t) \de t = \int_{\gamma_\eps} e^{-z\lambda} \T_\j(z) \de z 
  = \int_0^\infty e^{\theta i}  e^{-\lambda(\eps+\rho e^{\theta i})} \T_\j(\eps + \rho e^{\theta i})  \de\rho, \notag
\end{align}
thus, if $C := -M/\sin\theta$, 
then the following standard estimate can be obtained on the interval $\clsxint{\eps,\infty}$ (rather than in 
$\clsxint{0,\infty}$):
\begin{align}
  \left\| \int_\eps^\infty e^{-t \lambda}\T_\j(t) \de t \right\|
    & \le \int_0^\infty 
            Me^{-\eps(\re(\lambda) - \omega)} e^{-\re((\lambda-\omega)\rho e^{\theta i})} \de \rho \notag \\
    & \le \int_0^\infty M e^{-\rho((r-\omega)\cos\theta - s\sin\theta)}\de\rho \notag \\
    & = \frac{M}{(r-\omega)\cos\theta - s\sin\theta} \le \frac{M}{ - s\sin\theta} = \frac{C}{s}=\frac{C}{|\im(q)|}. \notag
\end{align}
In this way, for every $x \in X$, we find
\begin{align}
  \norma{\C_{q}(\A)x}{} 
     & \le \int_0^\eps \norma{(\T(t)e^{-tq})x}{} \de t + \frac{C}{|\im(q)|}\norma{x}{} \notag \\
     & \le M \int_0^\eps e^{t(\omega-\re(q))}\norma{x}{} \de t+ \frac{C}{|\im(q)|}\norma{x}{}= 
     \left(M \frac{1 -e^{\eps(\omega-\re(q))}}{\re(q)-\omega} + \frac{C}{|\im(q)|}\right) \norma{x}{}, \notag 
\end{align}
hence, by the arbitrariness of $\eps$, we obtain
\begin{align}\label{stima risolvente con im(q)}
  \norma{\C_{q}(\A)}{} \le \frac{C}{|\im(q)|}. 
\end{align}
Collecting together \eqref{stima risolvente con re(q)} with \eqref{stima risolvente con im(q)} we find
\eqref{stima risolvente analitico}.
\end{proof}

\begin{Thm}\label{slice regularity+bddness -> sph. sectoriality}
Let $\A : D(\A) \funzione X$ be a closed right linear operator with $D(\A)$ dense in $X$. Suppose that $\A$ generates a strongly continuous semigroup $\T:\clsxint{0,\infty} \funzione \Lin^\r(X)$ and there exist $M \in \clsxint{1,\infty}$, 
$\omega \in \erre$ and $L > 0$ such that $\norma{\T(t)}{} \le M e^{\omega t}$ for all $t \ge 0$, and
\begin{equation}\label{sectorial estimate2}
  \norma{\C_q(\A)}{} \le \frac{L}{|\im(q)|} \qquad \forall q \in \omega + \Omega_{\pi/2}.
\end{equation}
Then $\A$ is a spherical $\delta$-sectorial operator with vertex $\omega$ for some $\delta \in \cldxint{0,\pi/2}$. Moreover, there exists $K > 0$ such that
\[
\norma{\C_q(\A)}{} \le \frac{K}{|q-\omega|} \qquad \forall q \in \omega + \Omega_{\pi/2+\delta}.
\]
\end{Thm}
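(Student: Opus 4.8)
\emph{Strategy and reduction.} The plan is to reduce to vertex $\omega=0$ and then upgrade the half-plane bound into a sectorial one by a Neumann-series perturbation of the operators $\Delta_q(\A)$. By Lemma~\ref{A <-> A - w}, $\A-\omega\Id$ generates the semigroup $t\mapsto e^{-\omega t}\T(t)$, which is bounded by $M$, and $\Delta_q$, $\Q_q$, $\C_q$, $\rho_\s$ are merely translated by $q\mapsto q+\omega$; in particular $\norma{\C_q(\A-\omega\Id)}{}=\norma{\C_{q+\omega}(\A)}{}\le L/|\im(q)|$ for $q\in\Omega_{\pi/2}$. Hence I may assume $\omega=0$, and it suffices to produce $\delta\in\opint{0,\pi/2}$ and $K>0$ with $\Omega_{\pi/2+\delta}\subseteq\rho_\s(\A)$ and $\norma{\C_q(\A)}{}\le K/|q|$ there; translating back then gives the theorem.

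\emph{Half-plane bounds.} On $\Omega_{\pi/2}=\{q\in Q_\alg:\re(q)>0\}$, Theorem~\ref{re q > 0 in rho(A)} (with $n=1$) gives $\Omega_{\pi/2}\subseteq\rho_\s(\A)$ and $\norma{\C_q(\A)}{}\le M/\re(q)$; together with the hypothesis $\norma{\C_q(\A)}{}\le L/|\im(q)|$ and $|q|^2=\re(q)^2+|\im(q)|^2$ this yields $\norma{\C_q(\A)}{}\le K_0/|q|$, $K_0:=\sqrt2\max\{M,L\}$. Fixing $\j\in\su_\alg$, the classical theory applied to the bounded $C_0$-semigroup generated by $\A_\j$ on $X_\j$ (as in the proof of Theorem~\ref{re q > 0 in rho(A)}) gives $\norma{\R_\lambda(\A_\j)}{}\le M/\Re(\lambda)$ for $\Re\lambda>0$, whence $\norma{\Q_q(\A)}{}\le M^2/\re(q)^2$ by \eqref{Q = R R}. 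The matching bound near the imaginary axis comes from $\Delta_{q^c}(\A)=\Delta_q(\A)$ (so $\Q_{q^c}(\A)=\Q_q(\A)$) and \eqref{resolvent operator}, which give $\C_q(\A)-\C_{q^c}(\A)=\Q_q(\A)(q^c-q)$; evaluating at a vector $x$, using Lemma~\ref{norma moltiplicativa} ($x\mapsto(q^c-q)x=-2\im(q)x$ scales the norm by $2|\im(q)|$ onto the sphere of that radius) together with $q^c\in\Omega_{\pi/2}$ and $|\im(q^c)|=|\im(q)|$, one gets $2|\im(q)|\,\norma{\Q_q(\A)}{}\le\norma{\C_q(\A)}{}+\norma{\C_{q^c}(\A)}{}\le 2L/|\im(q)|$, i.e. $\norma{\Q_q(\A)}{}\le L/|\im(q)|^2$; combining, $\norma{\Q_q(\A)}{}\le K_1/|q|^2$ with $K_1:=2\max\{M^2,L\}$. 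Finally $\A\Q_q(\A)=\Q_q(\A)q^c-\C_q(\A)\in\Lin^\r(X)$ (bounded by the closed graph theorem, as noted after Definition~\ref{D:spherical spectral notions}), with $\norma{\A\Q_q(\A)}{}\le|q|\,\norma{\Q_q(\A)}{}+\norma{\C_q(\A)}{}\le K_2/|q|$, $K_2:=K_0+K_1$.

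\emph{Perturbation into the sector.} Set $\gamma:=\arccos\big(1/(8K_2)\big)$ and $\delta:=\arcsin\big(1/(8K_2)\big)$, both in $\opint{0,\pi/2}$. A point $q_0\in\Omega_{\pi/2+\delta}\setmeno\Omega_{\pi/2}$ is non-real with $\arg(q_0)\in\clsxint{\pi/2,\pi/2+\delta}$, hence $q_0=\rho_0 e^{(\pi/2+\beta)\j}$ for unique $\rho_0>0$, $\beta\in\clsxint{0,\delta}$, $\j\in\su_\alg$; put $q_1:=\rho_0 e^{\gamma\j}\in\Omega_{\pi/2}\subseteq\rho_\s(\A)$, so $|q_0|=|q_1|=\rho_0$ and $|\re(q_0)-\re(q_1)|=\rho_0(\sin\beta+\cos\gamma)$. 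Writing $\Delta_{q_0}(\A)-\Delta_{q_1}(\A)=-2(\re(q_0)-\re(q_1))\A+(|q_0|^2-|q_1|^2)\Id$ and inserting $\Id=\Q_{q_1}(\A)\Delta_{q_1}(\A)$ on $D(\A^2)$, one obtains $\Delta_{q_0}(\A)=(\Id+G)\Delta_{q_1}(\A)$ on $D(\A^2)$, with $G:=-2(\re(q_0)-\re(q_1))\A\Q_{q_1}(\A)+(|q_0|^2-|q_1|^2)\Q_{q_1}(\A)\in\Lin^\r(X)$, the second summand being $0$; thus $\norma{G}{}\le 2\rho_0(\sin\beta+\cos\gamma)K_2/\rho_0<1/2$. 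Hence $\Id+G$ is invertible in the Banach algebra $\Lin^\r(X)$ with $\norma{(\Id+G)^{-1}}{}<2$, so $\Delta_{q_0}(\A)\colon D(\A^2)\funzione X$ is bijective with bounded inverse $\Q_{q_0}(\A)=\Q_{q_1}(\A)(\Id+G)^{-1}\in\Lin^\r(X)$, i.e. $q_0\in\rho_\s(\A)$. This proves $\Omega_{\pi/2+\delta}\subseteq\rho_\s(\A)$. Moreover $\norma{\Q_{q_0}(\A)}{}<2K_1/|q_0|^2$ and $\norma{\A\Q_{q_0}(\A)}{}=\norma{\A\Q_{q_1}(\A)(\Id+G)^{-1}}{}<2K_2/|q_0|$, so by \eqref{resolvent operator} $\norma{\C_{q_0}(\A)}{}\le|q_0|\,\norma{\Q_{q_0}(\A)}{}+\norma{\A\Q_{q_0}(\A)}{}<(2K_1+2K_2)/|q_0|$; together with the bound on $\Omega_{\pi/2}$ this gives $\norma{\C_q(\A)}{}\le K/|q|$ on $\Omega_{\pi/2+\delta}$, $K:=2K_1+2K_2$, and after translating back, the statement with vertex $\omega$.

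\emph{Expected obstacle.} The delicate step is the bound $\norma{\Q_q(\A)}{}\le L/|\im(q)|^2$ near the imaginary axis: in the commutative case $\Q_q(\A)=\C_q(\A)\C_{q^c}(\A)$ makes it automatic, but here it hinges on the commutator identity $\C_q(\A)-\C_{q^c}(\A)=\Q_q(\A)(q^c-q)$ and on the fact that $q$ and $q^c$ lie in the same half-plane $\Omega_{\pi/2}$. The other point requiring care is that $\Delta_q(\A)$ is unbounded, so the perturbation must be set up so that $G$ is a bounded operator, which works precisely because $\A\Q_{q_1}(\A)$ is bounded.
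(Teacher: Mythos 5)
Your proof is correct, and while its first half coincides with the paper's --- the reduction to $\omega=0$ via Lemma \ref{A <-> A - w}, the bound $\norma{\C_q(\A)}{}\le \mathrm{const}/|q|$ on $\Omega_{\pi/2}$ obtained by combining the hypothesis with \eqref{stima risolvente con re(q)}, and the estimate on $\Q_q(\A)$ extracted from the identity $\C_{q^c}(\A)-\C_q(\A)=\Q_q(\A)(q-q^c)$ --- the extension into the larger sector is genuinely different. The paper expresses the classical resolvent $\R_\mu(\B_\j)$ on $X_\j$ through $\Q_p(\B)$ and $\C_p(\B)$, obtains $\norma{\R_\mu(\B_\j)}{}\le 3C/|\Im(\mu)|$ on the half-plane, invokes the classical generation theorem for analytic semigroups to enlarge $\rho(\B_\j)$ to a sector, transfers back to $\rho_\s(\B)$ via Theorem \ref{legame risolventi}, and finally estimates $\C_q(\B)$ in the enlarged sector using the classical bound $\norma{\R_\lambda(\B_\j)}{}\le N/|\lambda|$ and \eqref{Q = R R}. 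You instead run the Neumann-series perturbation directly on the quadratic pencil $\Delta_q(\A)$, exploiting the observation that for $|q_0|=|q_1|$ the difference $\Delta_{q_0}(\A)-\Delta_{q_1}(\A)$ is a \emph{real} multiple of $\A$, hence becomes the bounded right linear operator $G$ after right-composition with $\Q_{q_1}(\A)$; this is where the preliminary bounds $\norma{\Q_q(\A)}{}\le K_1/|q|^2$ and $\norma{\A\Q_q(\A)}{}\le K_2/|q|$ pay off. Your route is more self-contained (no back-and-forth between $\rho_\s(\A)$ and $\rho(\A_\j)$, no black-box appeal to the complex theory) and produces explicit constants $\delta=\arcsin(1/(8K_2))$, $K=2K_1+2K_2$, whereas the paper's route is shorter on the page by delegating the sector enlargement to the classical literature. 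One step worth spelling out: the equality $\norma{\Q_q(\A)(2\im(q))}{}=2|\im(q)|\,\norma{\Q_q(\A)}{}$ you use near the imaginary axis rests on Lemma \ref{norma moltiplicativa} together with $|c^{-1}|=|c|^{-1}$ for $c\in Q_\alg\setmeno\{0\}$ (a consequence of \eqref{eq:assumption-2}); equivalently, as in the paper, write $\Q_q(\A)=\bigl(\C_{q^c}(\A)-\C_q(\A)\bigr)(2\im(q))^{-1}$ and estimate directly.
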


\begin{proof}
If $\B := \A - \omega\Id$, then $\B$ generates the strongly continuous semigroup $\S(t) = e^{-\omega t}\T(t)$ statisfying the estimate $\norma{\S(t)}{} \le M$. Therefore $\Omega_{\pi/2} \subseteq \rho_\s(\B)$ and \eqref{sectorial estimate2} yields
\begin{equation}\label{C(B) <}
  \norma{\C_{q}(\B)}{} = \norma{\C_{q+\omega}(\A)}{} \le \frac{L}{|\im(q)|} \qquad \forall q \in \Omega_{\pi/2}.
\end{equation}
Moreover, thanks to \eqref{stima risolvente con re(q)}, for every $q \in \Omega_{\pi/2}$ we have 
$\norma{\C_q(\B)}{} \le M/\re(q)$, which together with \eqref{C(B) <} yields 
\begin{equation} \label{cq}
  \norma{\C_{q}(\B)}{} \le \frac{C}{|q|} \qquad \forall q \in \Omega_{\pi/2}
\end{equation}
for some $C > 0$. Fix $p \in \Omega_{\pi/2}$. By definition of spherical resolvent operator we have 
$\C_p(\B) = \Q_p(\B)p^c - \B\Q_p(\B)$ and $\C_{p^c}(\B) = \Q_{p^c}(\B)p - \B\Q_{p^c}(\B) =\Q_{p^c}(\B)p - \B\Q_p(\B)$. Hence, subtracting the two identities, we get $\C_{p^c}(\B) - \C_p(\B)  = \Q_p(\B)(p - p^c) = \Q_p(\B)2 \im(p)$. Therefore if $\im(p) \neq 0$ then
\begin{equation*}
  \Q_p(\B) = (\C_{p^c}(\B) - \C_p(\B)) (2\im(p))^{-1},
\end{equation*}
therefore 
it follows that 
\begin{equation}
  \norma{\Q_p(\B)}{}
    \le \frac{1}{2|\im(p)|}(\norma{\C_{p^c}(\B)}{} + \norma{\C_p(\B)}{}) 
    \le \frac{C}{|\im(p)||p|}. \label{stima}
\end{equation}
Now fix $x \in X$, $\j \in \su_\alg$ and let $\mu \in \ci$ be such that $\phi_\j(\mu) = p$. From \eqref{resolvent operator} and \eqref{Q = R R}, we infer the following chain of equalities:
\begin{align}
  \R_\mu(\B_\j)x 
    & = ({\overline{\mu}} \Id_{X_\j} - \B_\j)\Q_p(\B)x \notag \\
    & = {\overline{\mu}} \Q_p(\B)x - \B_\j\Q_p(\B)x \notag \\
    & = (\Q_p(\B)x) p^c - \B(\Q_p(\B)x) \notag \\
    & = \Q_p(\B)(x p^c) + \C_p(\B)x - \Q_p(\B)(p^c x), \notag
\end{align}
therefore from \eqref{cq} and \eqref{stima} we get
\begin{align}
  \norma{\R_{\mu}(\B_\j)x}{} 
    & \le \norma{\Q_p(\B)(x p^c)}{} + \norma{\C_p(\B)x}{} + \norma{\Q_p(\B)(p^c x)}{}  \notag \\
    & \le  \norma{\Q_p(\B)}{}\norma{xp^c}{} + \norma{\C_p(\B)}{}\norma{x}{} + \norma{\Q_p(\B)}{}\norma{p^c x}{}  \notag \\
    & =  \norma{\Q_p(\B)}{}\norma{x}{}|p| + \norma{\C_p(\B)}{}\norma{x}{} + \norma{\Q_p(\B)}{}\norma{x}{}|p|  \notag \\
    & \le \left(\frac{C|p|}{|\im(p)||p|} + \frac{C}{|p|}+ \frac{C|p|}{|\im(p)||p|}\right)\norma{x}{} \le
    \frac{3C}{|\im(p)|}\norma{x}{} = \frac{3C}{|\im(\mu)|}\norma{x}{}. \notag
\end{align}
Hence, by the arbitrariness of $p \in \Omega_{\pi/2}$ and of $x \in X$, we have proved that
\[
  \norma{\R_\mu(\B_\j)}{} \le \frac{3C}{|\im(\mu)|} \qquad \forall \mu \in D_{\pi/2}.
\]
Therefore the classical complex theory of analytic semigroups (see, e.g., \cite[Thereom 4.6]{EngNag00}) applies to $\B_\j$ and we find that there exists $\delta_\j \in \cldxint{0,\pi/2}$ such that $D_{\pi/2 + \delta_\j} \subseteq \rho(\B_\j)$,
the resolvent set of $\B_\j$. This fact allows to apply Theorem \ref{legame risolventi} and to deduce that $\B$ is spherical sectorial of angle $\delta:=\sup_{\j \in \su_\alg}\delta_\j$, i.e. 
$\Omega_{\pi/2 + \delta} \subseteq \rho_\s(\B)$. Moreover, we have that $D_{\pi/2+\delta} \subseteq \rho(\B_\j)$ for all 
$\j \in \su_\alg$ (in other words, we can assume that $\delta_\j$ does not depend on $\j \in \su_\alg$).
Now let $q \in \Omega_{\pi/2 + \delta}$ and let $\lambda \in D_{\pi/2 + \delta}$ be such that $q = \phi_\j(\lambda)$. From the classical theory we also have that there is a constant $N > 0$ such that 
$\norma{\R_\lambda(\B_\j)}{} \le N/|\lambda|=N/|q|$, hence \eqref{Q = R R} yields
\begin{equation*}
  \norma{\Q_{q}(\B)}{} \le \norma{\R_{\overline{\lambda}}(\B_\j)}{}\norma{\R_\lambda(\B_\j)}{} \le 
  \frac{N}{|\lambda|^2} = \frac{N}{|q|^2},
\end{equation*}
therefore, observing that $\B\R_z(\B_\j) = \B_\j\R_z(\B_\j) = z\R_z(\B_\j) - \Id_{X_\j}$ for every $z \in \rho(\B_\j)$, we get
\begin{align*}
  \norma{\C_q(\B)}{} 
    & = \norma{\Q_q(\B)q^c - \B\Q_q(\B)}{} \le 
    \norma{\Q_q(\B)q^c}{} + \norma{\B\Q_q(\B)}{} \notag \\
    & = \norma{\Q_q(\B)q^c}{} + \norma{\B\R_{\overline{\lambda}}(\B_\j)\R_\lambda(\B_\j)}{} \notag \\
    & = \norma{\Q_q(\B)}{}|q^c| + \norma{(\overline{\lambda}\R_{\overline{\lambda}}(\B_\j) - \Id_{X_\j})\R_\lambda(\B_\j)}{} 
          \notag \\
    & \le \norma{\Q_q(\B)}{}|q^c| + |\lambda|\norma{\R_{\overline{\lambda}}(\B_\j)\R_\lambda(\B_\j)}{} +
      \norma{\R_\lambda(\B_\j)}{} \notag \\
    & \le \frac{N|q^c|}{|q|^2} + \frac{N|q|}{|q^2|} + \frac{N}{|q|} = \frac{3N}{|q|}. 
    \end{align*}
Now we conclude by invoking the equalities $\rho_\s(\A) = \omega + \rho_\s(\B)$ and $\C_q(\A) = \C_{q-\omega}(\B)$.
\end{proof}


\vspace{1em}

\noindent \textbf{Acknowledgements.}  The first author is partially supported by INFN-TIFPA and by GNSAGA of INdAM. The second author is partially supported by GNAMPA of INdAM. 



\end{document}